\newtheorem{thm}{Theorem}[section]
\newtheorem{definition}[thm]{Definition}
\newtheorem{theorem}[thm]{Theorem}
\newtheorem*{theorem*}{Theorem}
\newtheorem*{oseledec*}{Oseledec Theorem}
\newtheorem{cor}[thm]{Corollary}
\newtheorem*{cor*}{Corollary}
\newtheorem{claim}[thm]{Claim}
\newtheorem{lemma}[thm]{Lemma}
\newtheorem*{lemma*}{Lemma}
\newtheorem{prop}[thm]{Proposition}
\def\moverlay{\mathpalette\mov@rlay}
\def\mov@rlay#1#2{\leavevmode\vtop{%
   \baselineskip\z@skip \lineskiplimit-\maxdimen
   \ialign{\hfil$\m@th#1##$\hfil\cr#2\crcr}}}
\newcommand{\charfusion}[3][\mathord]{
    #1{\ifx#1\mathop\vphantom{#2}\fi
        \mathpalette\mov@rlay{#2\cr#3}
      }
    \ifx#1\mathop\expandafter\displaylimits\fi}
\newcommand{\bigcupdot}{\charfusion[\mathop]{\bigcup}{\cdot}}
\let\ul\underline
\let\wh\widehat
\let\wt\widetilde
\newcommand{\wpi}{\wh{\pi}}
\newcommand{\cont}{\text{Grassmann-H\"older continuous}}
\def\tigma{\ensuremath{\widetilde{\sigma}_R}}
\def\Tigma{\ensuremath{\widetilde{\Sigma}}}
\def\Jac{\ensuremath{\mathrm{Jac}}}
\def\HWT{\ensuremath{\mathrm{RWT}}}
\def\WT{\ensuremath{\mathrm{WT}}}
\def\CAR{\ensuremath{\curvearrowright}}
\def\Sig{\ensuremath{\widehat{\Sigma}}}
\title{
Invariant Family of Leaf measures and The Ledrappier-Young Property for Hyperbolic Equilibrium States}
\author{Snir Ben Ovadia}
\begin{document}
\maketitle
\begin{abstract}
$M$ is a Riemannian, boundaryless, and compact manifold with $\dim M\geq 2$, and $f$ is a $C^{1+\beta}$ ($\beta>0$) diffeomorphism of $M$. $\varphi$ is a H\"older continuous potential on $M$. We construct an invariant and absolutely continuous 
family of measures (with transformation relations defined by $\varphi$), which sit on local unstable leaves. We present two main applications. First, given an ergodic homoclinic class $H_\chi(p)$, we prove that $\varphi$ admits a local equilibrium state on $H_\chi(p)$ if and only if $\varphi$ is ``recurrent on $H_\chi(p)$" (a condition tested by counting periodic points), and one of the leaf measures gives a positive measure to a set of positively recurrent hyperbolic points; and if an equilibrium measure exists, the said invariant and absolutely continuous family of measures constitutes as its conditional measures. An immediate corollary is the local product structure of hyperbolic equilibrium states. 
Second, we prove a Ledrappier-Young property for hyperbolic equilibrium states- if $\varphi$ admits a conformal family of leaf measures, and a hyperbolic local equilibrium state, then the  leaf measures of the invariant family (respective to $\varphi$) are equivalent to the conformal measures (on a full measure set). This extends the celebrated result by Ledrappier and Young for hyperbolic SRB measures, which states that a hyperbolic equilibrium state of the geometric potential (with pressure 0) has conditional measures on local unstable leaves which are absolutely continuous w.r.t the Riemannian volume of these leaves.
\end{abstract}

\tableofcontents

\section{Introduction}\label{introintro}

When studying a complex system of many states and its dynamics, one approach is to find a measure on the space of all states of the system which is invariant under its dynamics, to represent a distribution of possible events when the system is in equilibrium. The collection of all possible such invariant measures could be very big, and the field of thermodynamic formalism offers canonical ways to single out specific invariant measures of importance. 

One way to single out measures of importance, is by the notion of equilibrium states: probability measures which maximize a dynamic quantity w.r.t some potential (function) on the system. More explicitly, let $M$ be the space of states of the system (a compact boundaryless Riemannian manifold), let $f\in\mathrm{Diff}^{1+}(M)$ be the dynamics of the system, and let $\varphi:M\rightarrow \mathbb{R}$ be a bounded measurable potential on $M$; then an equilibrium state of $\varphi$ maximizes $\mu\mapsto h_{\mu}(f)+\int_M \varphi d\mu$, where $h_{\mu}(f)$ is the metric entropy of $\mu$ w.r.t $f$. 

Two important instances of this notion are measures of maximal entropy (when $\varphi\equiv 0$), and SRB measures (when $\varphi$ is the geometric potential, i.e minus the logarithm of the Jacobian of $f$ when restricted to the dynamics on unstable leaves). The fact that equilibrium states of the geometric potential (when $\sup\{h_{\mu}(f)+\int \varphi d\mu\}=0$) are SRB measures is highly non trivial, and it is due to Ledrappier and Young, \cite{LedrappierYoungI}. It is an instance of their more general formula to compute the entropy of general measures, via local dimension of conditional measures on unstable leaves (see also \cite{LedrappierYoungII}). When $f$ preserves a smooth measure, the entropy formula is due to Pesin, \cite{Pesin77}.

SRB measures are measures which have conditional measures which are absolutely continuous w.r.t the induced Riemannian volume on unstable leaves, when disintegrated by a measurable partition subordinated to the foliation of unstable leaves (see Rokhlin's disintegration theorem). SRB stands for Sinai, Ruelle, and Bowen, due to their pioneering work in the field. Hyperbolic and ergodic SRB measures are of physical importance, as the set of points which satisfy the forward-time Birkhoff's ergodic theorem are of positive Riemannian volume (and thus, by convention, are observable in an experiment or a simulation). This property is due to Pesin's absolute continuity theorem (see \cite{Pesin77,KS,PughAndShub}). Tsujii showed a converse, a condition via Lyapunov exponents on orbits such that a physical measure must be an ergodic hyperbolic SRB, see \cite{Tsuji}.

While SRB measures are important, it is not clear in what generality systems admit them. Sinai had shown their existence in Anosov systems, and Bowen and Ruelle in Axiom A systems. The dual characterization through conditional leaf measures, and as an equilibrium state, by Ledrappier and Young, offered a new way to test if a system admits an SRB measure. 

Singling out measures of importance through their conditional leaf measures, is another important tool in thermodynamic formalism, in addition to the notion of equilibrium states. Some important examples of this are the Margulis construction for the measure of maximal entropy through its leaf measures (\cite{MargulisLeafMeasures}), and the extension of the horocyclic flow to high dimension variable negative curvature manifolds (see \cite{Marcus75,BowenMarcus77}). Unstable leaves can be viewed as the equivalence class of an equivalence relation on orbits. The notion of defining measures by conditional measures on unstable leaves, has been further generalized and studied as an equivalence class of orbits on manifolds (see \cite{Series80}) and symbolic spaces (see \cite{Series80} and \cite{PetersenSchmidt97}). 

Given a H\"older continuous potential, the idea of an equilibrium state is well defined, but it is not clear how to generally define the leaf measures corresponding to a potential. The Margulis leaf measures are conformal- i.e when pushed forward they remain invariant up to a factor of $e^{-h_{\mathrm{top}}(f)}$, where $h_{\mathrm{top}}(f)=\sup\{h_{\mu}(f):\mu\text{ is an invariant probability measure}\}$. Similarly, the conditional leaf measures of SRB measures are the induced Riemannian volume, which gains a factor of $\exp\Big($geometric potential$\Big)$. We therefore are looking for a family of conditional leaf measures which are conformal w.r.t our potential. More explicitly, if $\varphi$ is a bounded measurable potential, and $V^u$ is a local unstable leaf, we wish to find measures which satisfy $m_{V^u}\circ f^{-1}=e^{\varphi-P(\varphi)}\cdot m_{f[V^u]}$, where $P(\varphi)$ is the pressure of $\varphi$ (the quantity which an equilibrium state maximizes), which acts as a calibrating factor. In \cite{CPZuniformlyHyp,CPZ}, Climenhaga, Pesin, and Zelerowicz give a construction using Carath\'eodory dimension. Their construction works for a general H\"older continuous potential, in the partially hyperbolic setup with some restrictions on the central stable foliation and on the transitivity of the system (see the remark after Definition \ref{Cara} for more details).

The result of Ledrappier and Young connects the two approaches of singling out measures of importance as equilibrium states and through conditional measures on unstable leaves. This naturally raises the question of extending their result for more potentials than just the geometric potential. Given a 
$\cont$ potential, we give a construction of an invariant and absolutely continuous family of leaf measures which give a necessary and sufficient condition for the existence of a hyperbolic local equilibrium state via a leaf condition, and which act as the conditional leaf measures of the equilibrium state when it exists. We prove a Ledrappier-Young property, such that if the potential admits a hyperbolic local equilibrium state, and a conformal system of measures, then the conditional measures of the equilibrium state are equivalent to the conformal measures on a set of full measure, and in particular the conformal measures give a positive measure to the hyperbolic points. For the definitions of a hyperbolic local equilibrium state, the leaf condition, and a $\cont$ potential, see \textsection \ref{HWTsection}. In particular, Grassmann-H\"older continuity applies to the family of potentials $\{t\cdot \varphi\}_{t\in[0,1]}$, where $\varphi$ is the geometric potential, and this family varies continuously between the $0$ potential and the geometric potential.

\section{Basic Definitions and Main Results}\label{HWTsection}
 
 Throughout this paper $M$ is a Riemannian, boundaryless, and compact manifold of dimension $d\geq 2$, and $f\in \mathrm{Diff}^{1+\beta}(M), \beta>0$. Fix $\chi>0$. 
 \subsection{Basic Definitions}
\begin{definition}[$\chi$-summable points]\label{ChiHyp} A point $x\in M$ is called {\em $\chi$-summable} if there is a unique splitting $T_xM=H^s(x)\oplus H^u(x)$ s.t
         \begin{align*}
        &\sup_{\xi_s\in H^s(x),|\xi_s|=1}\sum_{m=0}^\infty|d_xf^m\xi_s|^2e^{2\chi m}<\infty,\sup_{{\xi_u\in H^u(x),|\xi_u|=1}}\sum_{m=0}^\infty|d_xf^{-m}\xi_u|^2e^{2\chi m}<\infty
        .
    \end{align*}
Let {\em $\chi$-summ} define the set of $\chi$-summable points. An $f$-invariant probability measure carried by $\chi$-$\mathrm{summ}$ is called {\em $\chi$-hyperbolic}.
\end{definition}


For each $x \in\chi\text{-}\mathrm{summ}$, write $s(x):=\mathrm{dim}(H^s(x)),u(x):=\mathrm{dim}(H^u(x))$. Notice, $\chi$-summ is $f$-invariant, and $H^s(\cdot), H^u(\cdot)$ are invariant under $d_\cdot f$.
\begin{theorem}[Pesin-Oseledec~Reduction~Theorem]\label{pesinoseledec}
For each point $x \in\chi\text{-}\mathrm{summ}$, there exists an invertible linear map $C_\chi(x):\mathbb{R}^d\rightarrow T_xM$, which depends measurably on $x$, such that 
$C_\chi(x)[\mathbb{R}^{s(x)}\times\{0\}]=H^s(x),C_\chi(x)[\{0\}\times\mathbb{R}^{u(x)}]=H^u(x)$. 
$C_\chi(\cdot)$ are chosen measurably on $\chi\text{-}\mathrm{summ}$, and the choice is unique up to a composition with an orthogonal mapping of the `` stable" and of the ``unstable" subspaces of the tangent space. In addition,
$$
C_\chi^{-1}(f(x))\circ d_xf\circ C_\chi(x)=\begin{pmatrix}D_s(x)  &   \\  & D_u(x)
\end{pmatrix},
$$
where $D_s(x),D_u(x)$ are square matrices of dimensions $s(x),u(x)$ respectively, and $\|D_s(x)\|,\|D_u^{-1}(x)\|\leq e^{-\chi}$,$\|D_s^{-1}(x)\|,\|D_u(x)\|\leq \kappa$ for some constant $\kappa=\kappa(f,\chi)>1$.
\end{theorem}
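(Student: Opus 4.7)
My plan is the standard Lyapunov/Pesin change-of-coordinates construction. On each $x\in\chi\text{-}\mathrm{summ}$, I would use the $\chi$-summability condition itself to \emph{define} an inner product on $T_xM$: set
\[
\langle \xi,\eta\rangle_{s,x}:=\sum_{m=0}^{\infty}\langle d_xf^{m}\xi,\, d_xf^{m}\eta\rangle\, e^{2\chi m},\qquad \xi,\eta\in H^s(x),
\]
and symmetrically
\[
\langle \xi,\eta\rangle_{u,x}:=\sum_{m=0}^{\infty}\langle d_xf^{-m}\xi,\, d_xf^{-m}\eta\rangle\, e^{2\chi m},\qquad \xi,\eta\in H^u(x).
\]
The $\chi$-summability inequalities guarantee absolute convergence (the sums are at most the squares of the suprema in Definition~\ref{ChiHyp}), and each form is manifestly positive-definite because the $m=0$ term is already the Riemannian inner product restricted to the relevant subspace. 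I would then extend to all of $T_xM$ by declaring $H^s(x)\perp H^u(x)$. Call the resulting inner product $\langle\cdot,\cdot\rangle_x$ the \emph{Lyapunov inner product} at $x$.

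Next I would define $C_\chi(x)$ as any linear isometry from $(\mathbb{R}^d,\text{Euclidean})$ to $(T_xM,\langle\cdot,\cdot\rangle_x)$ sending $\mathbb{R}^{s(x)}\times\{0\}$ to $H^s(x)$ and $\{0\}\times\mathbb{R}^{u(x)}$ to $H^u(x)$; concretely, choose a Gram--Schmidt orthonormal basis of $H^s(x)$ and of $H^u(x)$ with respect to the Lyapunov inner product, and let $C_\chi(x)$ carry the standard basis to these. Uniqueness up to orthogonal maps of the stable and unstable parts is then automatic: any two isometries between the same inner-product spaces that preserve an orthogonal decomposition differ by an orthogonal map on each summand. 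For measurability, I would argue that on $\chi\text{-}\mathrm{summ}$ the splitting $H^s(\cdot)\oplus H^u(\cdot)$ is measurable (it is the intersection of nested measurable subbundles defined through finite truncations of the above sums, as in standard Pesin theory), and Gram--Schmidt preserves measurability; this is the step I expect to be the most delicate and where I would invoke the classical reduction arguments.

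For the matrix form, note that $d_xf$ preserves the splitting, so
\[
C_\chi^{-1}(f(x))\circ d_xf\circ C_\chi(x)
\]
is automatically block-diagonal, giving $D_s(x)$ and $D_u(x)$. The key norm estimates then become one-line telescopings using the defining sums. For $\xi\in H^s(x)$,
\[
\|d_xf\xi\|_{s,f(x)}^2=\sum_{m=0}^{\infty}|d_xf^{m+1}\xi|^{2}e^{2\chi m}=e^{-2\chi}\bigl(\|\xi\|_{s,x}^2-|\xi|^2\bigr)\le e^{-2\chi}\|\xi\|_{s,x}^2,
\]
and since $C_\chi$ is an isometry from Euclidean to the Lyapunov inner product, this gives $\|D_s(x)\|\le e^{-\chi}$. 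The corresponding computation for $\xi\in H^u(f(x))$ and $d_{f(x)}f^{-1}$ yields $\|D_u^{-1}(x)\|\le e^{-\chi}$.

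Finally, for the reverse bounds, the same telescoping inverted reads
\[
\|\xi\|_{s,x}^2=|\xi|^2+e^{2\chi}\|d_xf\xi\|_{s,f(x)}^2,\qquad \|\xi\|_{u,f(x)}^2=|\xi|^2+e^{2\chi}\|d_{f(x)}f^{-1}\xi\|_{u,x}^2,
\]
and a Riemannian bound $|\xi|\le \|df^{\pm 1}\|_{\infty}\cdot|d_xf^{\mp 1}\xi|\le \|df^{\pm 1}\|_{\infty}\cdot\|d_xf^{\mp 1}\xi\|_{\bullet}$ immediately yields $\|D_s^{-1}(x)\|\le \sqrt{\|df^{-1}\|_\infty^2+e^{2\chi}}$ and $\|D_u(x)\|\le \sqrt{\|df\|_\infty^2+e^{2\chi}}$. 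Taking $\kappa:=\sqrt{\max(\|df\|_\infty,\|df^{-1}\|_\infty)^2+e^{2\chi}}$ depends only on $f$ and $\chi$, as required. The hardest part of the argument is the measurability of the Oseledets-type splitting on the (generally non-compact, non-invariant-measure-charged) set $\chi\text{-}\mathrm{summ}$; everything else is algebra with the defining series.
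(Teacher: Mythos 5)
Your construction is the classical Pesin/Lyapunov inner-product reduction, and it matches the approach in the reference the paper cites (the paper itself gives no proof, deferring entirely to \cite[Theorem~2.4]{SBO}). The telescoping identities, the resulting block-diagonal form, and the norm bounds $\|D_s(x)\|,\|D_u^{-1}(x)\|\le e^{-\chi}$ and $\|D_s^{-1}(x)\|,\|D_u(x)\|\le \kappa$ (with $\kappa$ built from $\|df^{\pm1}\|_\infty$ and $e^{2\chi}$) are all correct.

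Two remarks, one small and one real. The small one is a normalization discrepancy: the remark immediately after the theorem in the paper, $\|C_\chi^{-1}(x)\|^2=\sup\{2\sum_m|d_xf^m\xi_s|^2e^{2\chi m}+2\sum_m|d_xf^{-m}\xi_u|^2e^{2\chi m}\}$, reveals that the convention used inserts a factor of $2$ in front of each defining series. This factor is what forces $\|C_\chi(\cdot)\|\le 1$: with it, the $m=0$ terms already give $\|\xi_s+\xi_u\|_x^2\ge 2|\xi_s|^2+2|\xi_u|^2\ge|\xi_s+\xi_u|^2$, whereas your version only yields $\|C_\chi(\cdot)\|\le\sqrt{2}$. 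Nothing in the theorem statement needs $\|C_\chi\|\le1$, so this is harmless for the theorem itself, but if you go on to use the formula for $\|C_\chi^{-1}\|$ or the bound $\|C_\chi\|\le1$ later, be consistent about the factor.

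The real one is the gap you yourself flag: measurability of $x\mapsto C_\chi(x)$ over $\chi$-$\mathrm{summ}$. You sketch ``finite truncations of the defining sums'' and ``Gram--Schmidt preserves measurability,'' which is the right shape of the argument, but $\chi$-$\mathrm{summ}$ is defined pointwise and not assumed to carry any invariant measure, so one must actually check that the splitting $H^s(\cdot)\oplus H^u(\cdot)$, the Lyapunov quadratic forms, and the choice of orthonormal basis can all be made Borel on this set. This is precisely the content one is leaning on \cite[Theorem~2.4]{SBO} to supply; as written, your proposal asserts it rather than proves it, and that is the one place where your proof is genuinely incomplete.
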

The Pesin-Oseledec reduction theorem has many different versions, which are suitable for different setups. We use the version which appears, with proof, in \cite[Theorem~2.4]{SBO}.

It is possible to show that $\|C_\chi^{-1}(x)\|^2=\sup\limits_{\overset{|\xi_s+\xi_u|=1,}{\xi_s\in H^s(x), \xi_u\in H^u(x)}}\left\{2\sum\limits_{m\geq0}|d_xf^m\xi_s|^2e^{2\chi m}+ 2\sum\limits_{m\geq0}|d_xf^{-m}\xi_u|^2e^{2\chi m}\right\}$. In addition, $\|C_\chi(\cdot)\|\leq 1$. See \cite[Theorem~2.4, Lemma~2.9]{SBO} for details. $\|C_\chi^{-1}(x)\|$ serves a measurement of the hyperbolicity of $x$- the greater the norm, the worse the hyperbolicity (i.e slow contraction/expansion on stable/unstable spaces, or small angle between the stable and unstable spaces).

\begin{definition}\label{littleQ} Let $\epsilon>0$, and let $x \in\chi\text{-}\mathrm{summ}$, then

\medskip
    $$Q_\epsilon(x):=\max\left\{Q\in \{e^{\frac{-\ell\epsilon}{3}}\}_{\ell\in\mathbb{N}}:Q\leq \frac{1}{3^\frac{6}{\beta}}\epsilon^\frac{90}{\beta}\|C^{-1}_\chi(x)\|^\frac{-48}{\beta}\right\}.$$
\end{definition}

\begin{definition}[Recurrent $\epsilon$-weak temperability]\label{temperable}
Let $\epsilon>0$. A point $x\in\chi\text{-}\mathrm{summ}$ is called {\em $\epsilon$-weakly temperable} if  $\exists q:\{f^n(x)\}_{n\in\mathbb{Z}}\rightarrow \{e^{\frac{-\ell\epsilon}{3}}\}_{\ell\in\mathbb{N}}$ s.t
\begin{enumerate}
	\item $\frac{q\circ f}{q}=e^{\pm\epsilon}$,
	\item $\forall n\in\mathbb{Z}$, $q(f^n(x))\leq Q_\epsilon(f^n(x))$.
\end{enumerate}
If in addition to (1),(2), $q:\{f^n(x)\}_{n\in\mathbb{Z}}\rightarrow \{e^{-\frac{\ell \epsilon}{3}}\}_{\ell\in\mathbb{N}}$ can be chosen to also satisfy,

\begin{enumerate}
\item[(3)]$\limsup\limits_{n\rightarrow\pm\infty}q(f^n(x))>0$,
\end{enumerate}
then we say that $x$ is {\em recurrently $\epsilon$-weakly temperable}.
A function $q:\{f^n(x)\}_{n\in\mathbb{Z}}\rightarrow \{e^{\frac{-\ell\epsilon}{3}}\}_{\ell\in\mathbb{N}}$ which satisfies (1),(2), is called {\em an $\epsilon$-weakly tempered kernel of $x$}. Define $\WT_\chi^\epsilon:= \left\{x\in\chi\text{-}\mathrm{summ}:x\text{ is } \epsilon\text{-weakly temperable}\right\}$, and  $\HWT_\chi^\epsilon:= \left\{x\in\chi\text{-}\mathrm{summ}:x\text{ is recurrently } \epsilon\text{-weakly temperable}\right\}$.
\end{definition}

\noindent\textbf{Remark:} Every point $x\in\WT_\epsilon^\chi$ (for sufficiently small $\epsilon>0$) admits a global unstable manifold $W^u(x)=\{y\in M: d(f^{-n}(x),f^{-n}(y))\xrightarrow[n\rightarrow \infty]{}0\}$, and a global stable manifold $W^s(x)=\{y\in M: d(f^{n}(x),f^{n}(y))\xrightarrow[n\rightarrow \infty]{}0\}$. See \cite{BrinStuck}[\textsection5.6] for more details. In Definition \ref{globalstableunstable} we give an alternative (yet equivalent) notion of global stable/unstable manifolds.

\begin{claim}\label{canonicalepsilon}
	$\forall \epsilon>0$ and $\epsilon'\in(0, \frac{2}{3}\epsilon]$, $\HWT_\chi^{\epsilon'}\subseteq \HWT_\chi^{\epsilon}$, and $\WT_\chi^{\epsilon'}\subseteq \WT_\chi^{\epsilon}$.
\end{claim}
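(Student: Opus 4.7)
The plan is to prove both inclusions simultaneously by exhibiting, for each $\epsilon'$-weakly tempered kernel $q$ of a point $x$, an $\epsilon$-weakly tempered kernel $\tilde q$ of $x$ obtained from $q$ by rounding its values down into the coarser grid $\{e^{-\ell\epsilon/3}\}_{\ell\in\mathbb{N}}$.

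First I would record the monotonicity of the upper bound in Definition \ref{littleQ}: since $\epsilon'\leq\epsilon$, the quantity $\frac{1}{3^{6/\beta}}\epsilon^{90/\beta}\|C_\chi^{-1}(x)\|^{-48/\beta}$ is non-decreasing in $\epsilon$, so $Q_{\epsilon'}(x)$ is already bounded above by the quantity that defines $Q_\epsilon(x)$. Then for each $n\in\mathbb{Z}$, I set $\tilde q(f^n(x))$ to be the largest element of $\{e^{-\ell\epsilon/3}\}_{\ell\in\mathbb{N}}$ not exceeding $q(f^n(x))$, which gives the pointwise bound $e^{-\epsilon/3}q(f^n(x))<\tilde q(f^n(x))\leq q(f^n(x))$.

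Condition (2) is then immediate: $\tilde q(f^n(x))\leq q(f^n(x))\leq Q_{\epsilon'}(f^n(x))\leq \frac{1}{3^{6/\beta}}\epsilon^{90/\beta}\|C_\chi^{-1}(f^n(x))\|^{-48/\beta}$, and since $\tilde q(f^n(x))$ lies in the $\epsilon$-grid, it is at most the defining maximum $Q_\epsilon(f^n(x))$. For condition (1), I would write
$$\frac{\tilde q\circ f}{\tilde q}=\frac{\tilde q\circ f}{q\circ f}\cdot\frac{q\circ f}{q}\cdot\frac{q}{\tilde q},$$
where the three factors lie in $(e^{-\epsilon/3},1]$, $[e^{-\epsilon'},e^{\epsilon'}]$, and $[1,e^{\epsilon/3})$ respectively, so the product lies in $(e^{-(\epsilon'+\epsilon/3)},e^{\epsilon'+\epsilon/3})$. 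Here the hypothesis $\epsilon'\leq\frac{2}{3}\epsilon$ is exactly what yields $\epsilon'+\epsilon/3\leq\epsilon$, giving $\tilde q\circ f/\tilde q=e^{\pm\epsilon}$ and hence the inclusion $\WT_\chi^{\epsilon'}\subseteq\WT_\chi^\epsilon$. Finally, for the $\HWT$ inclusion, condition (3) is inherited since $\tilde q\geq e^{-\epsilon/3}q$, so $\limsup_{n\to\pm\infty}\tilde q(f^n(x))\geq e^{-\epsilon/3}\limsup_{n\to\pm\infty}q(f^n(x))>0$.

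The argument is entirely a rounding/bookkeeping computation; the only substantive point is to notice that the numerical constraint $\epsilon'\leq\frac{2}{3}\epsilon$ is precisely calibrated to the $e^{\pm\epsilon/3}$ rounding error from passing between the two grids, so there is no real obstacle beyond verifying that this constant is the correct one.
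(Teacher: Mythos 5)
The paper does not give its own argument for this claim; it cites \cite[Claim~2.6]{LifeOfPi}, so there is nothing in this manuscript to compare against line by line. That said, your proof is correct and is the natural one: the grid-rounding map $\tilde q$ is well-defined because every value of $q$ lies below $1$ (indeed below $Q_{\epsilon'}<1$), your pointwise bound $e^{-\epsilon/3}q<\tilde q\leq q$ is exactly the right rounding estimate, the factorization of $\tilde q\circ f/\tilde q$ cleanly shows where the budget $\epsilon'+\epsilon/3\leq\epsilon$ comes from (which is precisely the $\frac{2}{3}$ in the hypothesis), condition (2) follows from the monotonicity of $\epsilon\mapsto\epsilon^{90/\beta}$ together with the fact that $\tilde q(f^n(x))$ lies in the $\epsilon$-grid, and condition (3) is immediate from $\tilde q\geq e^{-\epsilon/3}q$. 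No gaps.
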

For proof, see \cite[Claim~2.6]{LifeOfPi}.

\begin{lemma}\label{forChaptoro6ixo}
$\exists \epsilon_\chi>0$ which depends on $M,f,\beta$ and $\chi$ s.t $\forall \epsilon\in(0,\epsilon_\chi]$ $\HWT_\chi^\epsilon$ has a Markov partition and a finite-to-one coding with a H\"older continuous factor map, where the induced coding space is locally-compact.
\end{lemma}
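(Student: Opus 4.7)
The plan is to build a symbolic extension of $\HWT_\chi^\epsilon$ by adapting Sarig's Markov partition construction (and its higher-dimensional extensions) to the weakly tempered setting. The key observation is that Pesin chart sizes can be taken proportional to the weakly tempered kernel $q(\cdot)$ rather than to $Q_\epsilon(\cdot)$ itself: the inequality $q\leq Q_\epsilon$ preserves the standard distortion and graph-transform estimates, while the $\epsilon$-tempered growth $q\circ f = e^{\pm\epsilon}q$ controls how chart sizes change along orbits, so the ``two parameters'' in Sarig's double charts can be indexed by values of $q$ drawn from the discrete set $\{e^{-\ell\epsilon/3}\}_{\ell\in\mathbb{N}}$.

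First I would fix a countable dense set of parameters in the space of admissible Pesin data (a discretization of the reduction $C_\chi$ together with allowed kernel values) to form the vertex set $\mathcal{V}$ of a countable directed graph $\mathcal{G}$. For each $x\in\HWT_\chi^\epsilon$ and each choice of kernel $q$ satisfying (1) and (2) of Definition \ref{temperable}, each iterate $f^n(x)$ selects a vertex $v_n\in\mathcal{V}$ whose parameters are within a controlled distance of the true Pesin data at $f^n(x)$. Edges are defined by Sarig's $\epsilon$-overlap condition between forward images of charts, and admissibility becomes a Markov condition, giving a TMS $\Sigma$ together with a projection $\pi:\Sigma\to M$ obtained by intersecting stable/unstable manifolds extracted from nested chart graphs. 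Hölder continuity of $\pi$ then follows from the uniform contraction/expansion inside Pesin charts together with the $e^{\pm\epsilon}$ kernel-change bound; the finite-to-one property follows from Sarig's inverse theorem, which in this setup shows that the number of preimages of a point is controlled by $\|C_\chi^{-1}\|$. A standard Bowen--Sinai refinement of $\{\pi[[v]]:v\in\mathcal{V}\}$ then converts the coding into a genuine Markov partition.

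The main obstacle, and where condition (3) of Definition \ref{temperable} is essential, is \emph{local compactness} of $\Sigma$. In the Sarig framework local compactness is equivalent to every vertex having only finitely many incoming and outgoing edges, which in turn requires that along any admissible sequence the kernel parameter $q$ does not decay to zero. A merely weakly temperable point satisfies only $q\leq Q_\epsilon$ and its orbit can force $q(f^n(x))\to 0$, producing vertices of unbounded degree. The recurrence $\limsup_{n\to\pm\infty}q(f^n(x))>0$ guarantees that the kernel returns infinitely often to some fixed positive level $e^{-\ell_0\epsilon/3}$, so the vertices visited along the coding of $x$ lie in the subgraph $\mathcal{G}_{\ell_0}$ whose kernel-parameter is bounded below by $e^{-\ell_0\epsilon/3}$; each such subgraph has finite degree, and their union $\bigcup_{\ell_0}\Sigma(\mathcal{G}_{\ell_0})$ is locally compact and carries a full coding of $\HWT_\chi^\epsilon$. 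I expect the delicate step to be this recurrence bookkeeping: showing that one can choose kernels $q$ compatibly enough across orbits so that the image of $\HWT_\chi^\epsilon$ genuinely lands in the locally compact, ``recurrent'' part of $\Sigma$ rather than in a transient Sarig-type layer, and that the resulting coding remains finite-to-one after restriction.
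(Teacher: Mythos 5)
Your outline correctly identifies the overall strategy (adapt Sarig's coding and the higher-dimensional extension, discretize Pesin data, prove Hölder continuity and finite-to-one via an inverse theorem, Bowen--Sinai refinement); this is indeed the approach taken in the reference the paper delegates this lemma to. However, your final paragraph misassigns the role of the recurrence condition~(3) of Definition~\ref{temperable}, and the misassignment masks where that hypothesis actually enters.

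Local compactness of $\Sigma$ is a property of the graph $\mathcal{G}$: each vertex must have finite in- and out-degree. In the Sarig/Ben Ovadia construction this is automatic from the coarse-graining of the vertex set and the edge constraints: a vertex $\psi_x^{p^s,p^u}$ can only be adjacent to vertices $\psi_y^{q^s,q^u}$ with $q^s\wedge q^u\geq e^{-\epsilon}(p^s\wedge p^u)$, and only finitely many vertices in the discretized alphabet satisfy this bound on the window parameter together with the proximity condition on base points. A decaying sequence $q(f^n(x))\to 0$ does \emph{not} create a vertex of infinite degree --- it simply visits infinitely many distinct vertices, each of which still has finite degree. So the dichotomy ``merely weakly temperable'' versus ``recurrently weakly temperable'' plays no role in local compactness of $\Sigma$. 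What recurrence of $q$ actually buys is recurrence of the itinerary: $\limsup_{n\to\pm\infty}q(f^n(x))>0$ forces the symbolic orbit of $x$ to return to a fixed symbol infinitely often in both time directions, i.e., to lie in $\Sigma^\#$ (equivalently $\Sig^\#$ after the Bowen--Sinai step). This matters because (i) the finite-to-one bound is proven only on $\Sigma^\#$, not on $\Sigma$, and (ii) the identification $\wpi[\Sig^\#]=\pi[\Sigma^\#]=\HWT_\chi^\epsilon$ (Proposition~\ref{cafeashter}) is exactly what lets one say that the coding ``covers'' the set named in the lemma with a finite-to-one, Hölder factor map. Your sketch claims the union $\bigcup_{\ell_0}\Sigma(\mathcal{G}_{\ell_0})$ is the relevant locally compact object, but an orbit whose kernel dips below $e^{-\ell_0\epsilon/3}$ between returns is not contained in any single $\mathcal{G}_{\ell_0}$, and the full space you want is simply $\Sigma$ (or $\Sig$ after refinement), with the set of recurrently coded points being $\Sigma^\#$. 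Once you reassign recurrence from ``local compactness'' to ``membership in $\Sigma^\#$, hence finiteness-to-one and equality of the coded set with $\HWT_\chi^\epsilon$,'' the rest of your sketch lines up with the cited construction.
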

See \cite{LifeOfPi} for full details.

\begin{definition}\label{NUHsharp}
 $$\HWT_\chi:=\left\{x\in\chi\text{-}\mathrm{summ}:x\text{ is recurrently }\epsilon_\chi\text{-weakly temperable}\right\}.$$ 
\end{definition}
\noindent\textbf{Remark:} $\HWT_\chi$ carries all $\chi$-hyperbolic $f$-invariant probability measures; and its definition depends only on the quality of hyperbolicity of the orbits of its elements. In the following parts of this paper, when $\chi>0$ is fixed, the subscript of $\epsilon_\chi$ would be omitted to ease notation.

\begin{definition}[Pesin set] 
	$\forall N\geq 1$, the set $$\Lambda_N:=\{x\in \WT_\chi^\epsilon:\exists \epsilon\text{-weakly tempered kernel of }x\text{ s.t }q(x)\geq \frac{1}{N}\},$$ is called the {\em Pesin set of level $N$} (also sometimes called {\em level set}, or {\em Pesin level set}).
\end{definition}
\begin{definition}[Positively recurrent points] $\forall N \geq 1$, denote by $\Lambda_N$ the Pesin set of level $N$. Let
	$$\HWT_\chi^\mathrm{PR}:=\left\{x\in \HWT_\chi:\exists N\in\mathbb{N}\text{ s.t }\limsup_{n\rightarrow \infty}\frac{1}{n}\sum_{k=0}^{n-1}\mathbb{1}_{\Lambda_N}(f^k(x))>0\right\}.$$
\end{definition}

For the next definition, we assume w.l.o.g that $\exists s\in\{1,...,d-1\}$ s.t for all points $x\in \HWT_\chi$, $\mathrm{dim}H^s(x)=s$. This is possible, since we may simply split $\HWT_\chi$ into $d-1$ disjoint sets, and all results apply to each subset.  
\begin{definition}[Grassmann-H\"older continuity]\label{MnfldWeakHolder}
Let $TM$ be the tangent bundle of $M$, and let $x,y\in \WT_\chi^\epsilon$. Let $d_s(H^s(x),H^s(y))$ be the Grassmannian distance (of subspaces of dimension $s$, over $TM$) between the stable space of $x$ and stable space of $y$. Similarly, let $d_u(H^u(x),H^u(y))$ be the Grassmanian (of subspaces of dimension $d-s$, over $TM$) distance between the unstable space of $x$ and stable space of $y$. The {\em Lyapunov distance} on $\WT_\chi^\epsilon$ is $d_{GH}(x,y):=d(x,y)+ d_s(H^s(x),H^s(y))+ d_u(H^u(x),H^u(y))$. A function $\varphi:\WT_\chi^\epsilon\rightarrow\mathbb{R}$ is called {\em Grassmann-H\"older continuous} if it is H\"older continuous w.r.t  $d_{GH}(\cdot,\cdot)$.

Let $E\supseteq \WT_\chi^\epsilon$ be a measurable subset of $\bigcup\{W^u(x):x\in\WT_\chi^\epsilon\}$. A function $\varphi:E\rightarrow\mathbb{R}$ is called {\em unstable-Grassmann-H\"older continuous on $E$} if it is H\"older continuous w.r.t  $d(\cdot,\cdot)+d_u(\cdot,\cdot)$ on $E$, where for $x\in \bigcup\{W^u(x):x\in\WT_\chi^\epsilon\}$ $H^u(x)$ is defined as $T_xW^u(x)$ (which is well-defined).
\end{definition}

Since $f\in\mathrm{Diff}^{1+\beta}(M)$, $\forall t>0$, the scaled geometric potential $\varphi_t(x)=-t\cdot\log\Jac(d_xf|_{H^u(x)})$ is unstable-$\cont$. Every H\"older continuous potential is unstable-$\cont$, and every unstable-$\cont$ potential on $\WT_\chi^\epsilon$, is $\cont$. 

Notice also that a $\cont$ potential must be bounded, since $M$ has a finite diameter
.

\begin{definition}[Ergodic homoclinic class]\label{homoclinicclass}The {\em ergodic homoclinic class} of a hyperbolic periodic point $p$ (w.r.t $\chi$) is
    $$H_\chi(p):=\left\{x\in \HWT\chi:W^u(x)\pitchfork W^s(o(p))\neq\varnothing,W^s(x)\pitchfork  W^u(o(p))\neq\varnothing\right\}.$$
    Here $o(p)=\{f^k(p)\}$, $\pitchfork$ denotes transverse intersections of full codimension, and $W^{s/u}(\cdot)$ are the global stable and unstable manifolds of the point, respectively.
    \end{definition}
 See Definition \ref{globalstableunstable} for the definition of global stable and unstable leaves. Definition \ref{homoclinicclass} was introduced in \cite{RodriguezHertz}, with a set of Lyapunov regular points replacing $\HWT_\chi$. By a well-known argument, using the Inclination Lemma (see \cite[Theorem~5.7.2]{BrinStuck}), every two ergodic homoclinic classes are either disjoint modulo all conservative measures,\footnote{Conservative measures are measures which satisfy Poincar\'e's recurrence theorem: for any set of positive measure, almost every point in it returns to it infinitely often.} or coincide. We caution the reader that $H_\chi(p)$ is not closed, and that this definition is different from the definition in \cite{NewhousePeriodicEquivalenceRelation}.

Every ergodic $\chi$-hyperbolic probability is carried by a unique ergodic homoclinic class (see Claim \ref{tobewritten}).

\begin{definition}[Local equilibrium state]
	Let $\varphi:\WT_\chi^\epsilon\rightarrow\mathbb{R}$ be an $\cont$ potential. Let $\mu$ be a $\chi$-hyperbolic $f$-invariant ergodic probability measure. Let $H_\chi(p)$ be the unique ergodic homoclinic class which carries $\mu$. $\mu$ is called a hyperbolic {\em local equilibrium state of $\varphi$ on $H_\chi(p)$}, if
		$$h_\mu(f)+\int \varphi d\mu=\sup\left\{h_\nu(f)+\int \varphi d\nu:\nu\text{ is an }f\text{-invariant probability measure on  }H_\chi(p)\right\}.$$ 
\end{definition}
Notice, $\int\varphi d\mu<\infty$ since $\varphi$ is bounded since it is $\cont$.

\begin{definition}[Local pressure]
	Let $p$ be a periodic $\chi$-hyperbolic point, and let $H_\chi(p)$ be its ergodic homoclinic class. The {\em local pressure of $\varphi$ on $H_\chi(p)$} is $$P_{H_\chi(p)} (\varphi):= \sup\left\{h_\nu(f)+\int \varphi d\nu:\nu\text{ is an }f\text{-invariant probability measures on  }H_\chi(p)\right\}.$$
\end{definition}
The boundedness of $\varphi$ implies $P_{H_\chi(p)}(\varphi)<\infty$. Let $E$ as in Definition \ref{MnfldWeakHolder}.
\begin{definition}
	Given a sequence of $C^1$ embedded submanifolds, $V^u_n$, we write {\em $V^u_n\xrightarrow[n\rightarrow\infty]{C^1} V^u$}, where $V^u$ is an embedded submanifold, if $\sup_{x\in V^u_n}\inf_{y\in V^u}\{d(x,y)+d_{TM}(T_xV^u_n,T_yV^u)\}\xrightarrow[n\rightarrow\infty]{}0$.
\end{definition}
\begin{definition}[$\varphi$-conformal system of measures]\label{Cara}
	Let $\varphi:E\rightarrow\mathbb{R}$ be an unstable-$\cont$ potential. Let $H_\chi(p)$ be the ergodic homoclinic class of a $\chi$-hyperbolic and periodic point $p$. A {\em $\varphi$-conformal system of measures} on $H_\chi(p)$ is a collection of non-zero Radon measures on local\footnote{``local" here means diffeomorphic to a ball, in the induced metric.} unstable leaves of points in $H_\chi(p)$ 
	such that
	\begin{enumerate}
	\item $V^u_n\xrightarrow[n\rightarrow\infty]{C^1}V^u$ implies $m_{V^u_n} ^\varphi(1) \xrightarrow[n\rightarrow\infty]{} m_{V^u} ^\varphi(1) $,
	 \item for every local unstable leaf of a point in $H_\chi(p)$, $V^u$, $$m^\varphi_{V^u}\circ f^{-1}=e^{\varphi-P_{H_\chi(p)}(\varphi)}\cdot m^\varphi_{f[V^u]},$$
	where $m^\varphi_{ V^u}, m^\varphi_{ f[V^u]} $ are the $\varphi$-conformal measures on the local unstable leaves $V^u, f[V^u]$, respectively.
\end{enumerate}
\end{definition}
An example of a conformal family of measures, is the Riemannian volume on unstable leaves. It is $
\varphi$-conformal with $\varphi$ being the geometric potential $-\log \Jac(df|_{\text{unstable leaves}})$, when the pressure is $0$ (e.g when an SRB exists).

It is not clear a-priori when do conformal families exist. 
 In \cite{CPZ}, Climenhaga, Pesin, and Zelerowicz give a construction of such families in the partially hyperbolic setup using the Carath\'eodory dimension structure (which extends their result in the uniformly hyperbolic setup \cite{CPZuniformlyHyp}), under the assumption that $\varphi$ has the Bowen property (i.e uniformly summable variations on stable and unstable leaves, under a certain regularity assumption). 

\begin{definition}[$\varphi$-invariant family of leaf measures] \label{pcwis}
		Let $\varphi:\WT_\chi^\epsilon\rightarrow\mathbb{R}$ be a $\cont$ potential. Let $H_\chi(p)$ be the ergodic homoclinic class of a periodic $\chi$-hyperbolic point $p$ s.t $P_{H_\chi(p)}(\varphi)<\infty$. A {\em $\varphi$-invariant family of leaf measures on $H_\chi(p)$}, is a family $\mathcal{F}_{H_\chi(p)}(\varphi) $ of non-zero and finite measures s.t
	\begin{enumerate}
	\item $\forall \mu\in\mathcal{F}_{H_\chi(p)}(\varphi) $,	$\mu$ is carried by $WT_\chi^\epsilon$, and its support is contained in a local unstable leaf of a point $x\in H_\chi(p)$,
	\item $\exists$ a map $A:\cup\{\mathrm{supp}(\mu')\}_{\mu'\in\mathcal{F}_{H_\chi(p)}(\varphi)}\rightarrow\mathbb{R}$ which is measurable when restricted to measurable sets, s.t $\forall \mu\in\mathcal{F} _{H_\chi(p)}(\varphi) $, 

$\mu\circ f^{-1}$ 
$=e^{-P_{H_\chi(p)} (\varphi)}\sum_{i=1}^{N_{\mu}}e^{\phi(\mu^{(i)})}\mu^{(i)}$, where $\mu^{(i)}\in\mathcal{F} _{H_\chi(p)} (\varphi)$ for all $1\leq i\leq N(\mu)<\infty$, and $\phi(\mu^{(i)}) $ is a constant s.t $\phi(\mu^{(i)})=\varphi(x)+A(x)-A(f^{-1}(x))$ for $\mu^{(i)}$-a.e $x\in M$ where $\varphi$ is defined.
	\item $\exists$ $\sigma$-compact metric space $X$ s.t $\mathcal{F} _{H_\chi(p)}(\varphi)=\{\mu_{x}\}_{x\in X}$ and $x_n\rightarrow y\Rightarrow \mu_{x_n}\xrightarrow[]{\text{weak-}*} \mu_{y}$
.
	\end{enumerate}
\end{definition}
The measures in $\mathcal{F}_{H_{\chi}(p)}(\varphi)$ are not required to be mutually singular. 

One cannot expect uniqueness of such a family, as the Radon-Nikodym derivative of the push-forwards can always be changed by a coboundary.
\subsection{Main Results}\label{mainresult}

\begin{theorem*}[A]
Let $\varphi:\WT_\chi^\epsilon\rightarrow\mathbb{R}$ be a 
$\cont$ potential. Let $H_\chi(p)$ be the ergodic homoclinic class of a periodic $\chi$-hyperbolic point $p$
. Then $H_\chi(p)$ admits a $\varphi$-invariant family of leaf measures, $\mathcal{F}_{H_\chi(p)}(\varphi)$.
\end{theorem*}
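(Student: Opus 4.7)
The construction proceeds through the symbolic coding of $\HWT_\chi$ supplied by Lemma~\ref{forChaptoro6ixo}, followed by projection back to $M$. Let $\pi:\Sigma\to\HWT_\chi$ be the finite-to-one Hölder factor map from the locally compact TMS $\Sigma$. Because the Markov rectangles are built from Pesin charts that record the Grassmann data $(x,H^s(x),H^u(x))$, the Grassmann-Hölder regularity of $\varphi$ transports to ordinary Hölder continuity of $\tilde\varphi:=\varphi\circ\pi$ on $\Sigma$. Restrict to the irreducible component $\Sigma^{(p)}$ of $\Sigma$ that projects onto $H_\chi(p)$, which is well defined by the Inclination Lemma argument given after Definition~\ref{homoclinicclass}.

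\textbf{Symbolic leaf measures.} For each $\omega\in\Sigma^{(p)}$, build a nonzero $\sigma$-finite Borel measure $\tilde m_\omega$ on the symbolic unstable set $W^u_\Sigma(\omega):=\{\omega':\omega'_i=\omega_i,\ i\le 0\}$ by transfer-operator / conformal-measure techniques for the locally compact TMS. Two key features: (i) \emph{Gibbs scaling}, $\tilde m_\omega(C)\asymp\exp\!\bigl(S_n\tilde\varphi(\omega')-n\,P_{H_\chi(p)}(\varphi)\bigr)$ holds uniformly over forward cylinders $C\subset W^u_\Sigma(\omega)$ of length $n$ and points $\omega'\in C$; and (ii) \emph{shift transformation},
\[
\sigma_*\tilde m_\omega\;=\;e^{-P_{H_\chi(p)}(\varphi)}\sum_{w:\,\omega_0\to w}e^{\tilde\varphi+\Delta}\,\tilde m_{\tilde\omega_w},
\]
where $\tilde\omega_w\in\Sigma^{(p)}$ is the sequence obtained by shifting $\omega$ forward by one step and appending $w$ as its new $0$-th coordinate, and $\Delta$ is a Hölder coboundary arising from the standard cohomological reduction of $\tilde\varphi$ to a one-sided past function. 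Nontriviality and local finiteness follow from finite Gurevich pressure of $\tilde\varphi$, which is guaranteed by boundedness of $\varphi$.

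\textbf{Projection and parametrization.} Set $m_\omega:=\pi_*\tilde m_\omega$ for each $\omega\in\Sigma^{(p)}$. Standard properties of Pesin-type Markov codings ensure that $\pi[W^u_\Sigma(\omega)]$ lies in a local unstable manifold through $\pi(\omega)\in H_\chi(p)$, giving Definition~\ref{pcwis}(1). The intertwining $\pi\circ\sigma=f\circ\pi$ combined with feature (ii) above yields Definition~\ref{pcwis}(2), with the cocycle $A$ taken to be the $\pi$-pushdown of $\Delta$; by construction $\varphi+A-A\circ f^{-1}$ is constant on each $\mathrm{supp}(m_{\tilde\omega_w})$, supplying the required constant $\phi(\mu^{(i)})$. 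For Definition~\ref{pcwis}(3), take $X:=\Sigma^{(p)}$; it is $\sigma$-compact as a locally compact countable-state TMS, and weak-$*$ continuity $\omega_n\to\omega\Rightarrow m_{\omega_n}\to m_\omega$ follows from continuity of $\pi$ together with the uniform Hölder control on cylinder weights from feature (i).

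\textbf{Main obstacle.} The principal difficulty is the finite-to-one redundancy of $\pi$: distinct $\omega,\omega'\in\Sigma^{(p)}$ may project to the same manifold point and to the same local unstable plaque, so the parametrization $\omega\mapsto m_\omega$ must factor consistently through this redundancy in order for the finite-sum decomposition in Definition~\ref{pcwis}(2) not to be double-counted after passing to $M$. This is handled via the ``affable pair'' and inverse-theorem machinery developed in the author's prior work on symbolic dynamics for non-uniformly hyperbolic systems, which identifies precisely when two symbolic sequences represent the same local unstable plaque and allows restriction to a Borel cross-section of $\pi$. A secondary technicality is to ensure that $\tilde m_\omega$ does not concentrate on the symbolic boundary (backward paths escaping the graph at infinity), which reduces to a two-sided summable-variation estimate in which the Grassmann-Hölder hypothesis enters essentially.
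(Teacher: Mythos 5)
The proposal captures the overall strategy—code, make the lift of $\varphi$ one‑sided by a cohomological correction, build symbolic conditional measures by transfer‑operator machinery, and push forward to the manifold—but there are two substantive gaps.

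First and most important, you work with the Pesin‑chart coding $\pi:\Sigma\to M$ and build your symbolic unstable sets $W^u_\Sigma(\omega)=\{\omega':\omega'_i=\omega_i,\ i\le 0\}$ inside $\Sigma$. This is the wrong shift: $\pi$ on $\Sigma$ is not the finite‑to‑one map (that is $\wpi|_{\Sig^\#}$), and more importantly the sets $Z(u)$ overlap, so at the $\Sigma$ level there is no clean decomposition $f[W^u(\cdot)]=\bigcupdot W^u(\cdot)$. The entire mechanism by which the $\varphi$‑invariance property of Definition~\ref{pcwis}(2) is verified in the paper is the Bowen–Sinai refinement to the partition $\mathcal{R}$, giving $\Sig$ and Corollary~\ref{decomposition}; that disjoint decomposition is what makes $\mu_{\ul R}\circ f^{-1}=e^{-P}\sum_{\tigma\ul S=\ul R}e^{\phi(\ul S)}\mu_{\ul S}$ come out right after projecting by $\wpi$. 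Your ``main obstacle'' paragraph identifies exactly the symptom of working on $\Sigma$ (overlapping plaques, redundancy), but then defers to vaguely cited ``affable pair'' machinery and a Borel cross‑section of $\pi$. This is not what resolves the issue, and it is in fact not needed: the members of $\mathcal{F}_{H_\chi(p)}(\varphi)$ are explicitly not required to be mutually singular, so projective redundancy is harmless once the sum in (2) is indexed by $\tigma$‑preimages in $\Tigma_L\subset\Sig_L$; what is needed instead is simply to carry out the construction on $\Sig$, not $\Sigma$. Relatedly, the natural parameter space $X$ in Definition~\ref{pcwis}(3) is the one‑sided shift $\Tigma_L$ (past itineraries), not the two‑sided irreducible component.

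Second, your feature (i) ``Gibbs scaling'' $\tilde m_\omega(C)\asymp\exp(S_n\tilde\varphi-nP)$ uniformly over cylinders is not available at this level of generality. The shift $\Tigma_L$ is countable and only locally compact, no BIP property is assumed, and the one‑sided potential $\phi$ may be transient. The paper's actual construction (Theorem~\ref{gauraluilui}) avoids any Gibbs hypothesis: it fixes a $\phi$‑harmonic function $\psi$ (which always exists by Sarig/Cyr/Shwartz once $P_G(\phi)<\infty$), normalizes the Ruelle operator with $\wt{\phi}=\phi+\log\psi-\log\psi\circ\tigma-P_G(\phi)$ so that $L_{\wt\phi}1=1$, and defines $\wh p_{\ul R}$ as a weak‑$*$ limit of averages of point masses over $\tigma^n$‑preimages, supported on the compact set $\{\ul v\in\Tigma:v_i=R_i,\ i\le 0\}$. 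The resulting cylinder formulas (step 3 of that proof) are exact, not Gibbs‑asymptotic, and the absolute‑continuity/continuity of the family comes from the weak‑Hölder estimate of Corollary~\ref{gauraluilui2}, not from a Gibbs bound. Your ``secondary technicality'' about mass escaping to the symbolic boundary is likewise not an issue once the measures are built as limits on the compact forward fiber over a fixed past.

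In short: the right ingredients are present, but to make the argument go through you must (a) move to the refined Markov partition $\mathcal{R}$ and work on $\Sig_L$/$\Tigma_L$ so that Corollary~\ref{decomposition} gives the disjoint Markovian decomposition underlying Definition~\ref{pcwis}(2), and (b) replace the Gibbs assumption by the harmonic‑function normalization of the Ruelle operator, which is what the paper uses and what works in the possibly transient, locally compact setting.
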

While conformal families of measures are generally a simpler object for calculations, their general existence is a hard problem which is only solved in a few setups (we mention a few in \textsection \ref{introintro}). The main advantage of invariant families, is that they exists with (almost) no preceding assumptions. Furthermore, when a conformal family does exist, and so does an equilibrium state for the respective potential, the invariant family measures must coincide in measure-class with the conformal family. See the last result of this section.

\begin{definition}[$\varphi$-leaf condition]\label{leafco}
Let $\varphi:\WT_\chi^\epsilon\rightarrow\mathbb{R}$ be a 
$\cont$ potential. Let $H_\chi(p)$ be an ergodic homoclinic class of a $\chi$-hyperbolic periodic point $p$
. We say that {\em the $\varphi$-leaf condition is satisfied for $H_\chi(p)$}, if $\exists \mu\in\mathcal{F}_{H_\chi(p)}(\varphi)$ s.t $\mu(\HWT_\chi^{\mathrm{PR}})>0$.
\end{definition}

Here $\varphi_n(q):=\sum_{k=0}^{n-1}\varphi(f^{-k}(q))$, and $\mathcal{F}_{H_\chi(p)}(\varphi)$ is given by the theorem above. That is, in the theorem above we construct an invariant family, and the leaf condition applies to any invariant family as constructed by that theorem; We remind the reader that the invariant family need not be unique, as the leaf measures can always be changed by a coboundary factor. 

\begin{theorem*}[B] 
Let $\varphi:\WT_\chi^\epsilon\rightarrow\mathbb{R}$ be a 
$\cont$ potential. Let $H_\chi(p)$ be an ergodic homoclinic class of a $\chi$-hyperbolic periodic point $p$ s.t $p$ belongs to a Pesin level set $ \Lambda_{l
}$
. Then $\varphi$ admits a $\chi$-hyperbolic local equilibrium state on $H_\chi(p)$, if and only if $\sum_{n\geq1}\sum_{f^n(q)=q,q\in H_\chi(p)\cap \Lambda_l}e^{\varphi_n(q)-n\cdot P_{H_\chi(p)}(\varphi)}=\infty$
and the $\varphi$-leaf condition is satisfied for $H_\chi(p)$. If the local equilibrium state exists, then it is unique and the $\varphi$-invariant family is its conditional measures.
\end{theorem*}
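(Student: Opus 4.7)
The plan is to lift everything to the symbolic setting provided by Lemma \ref{forChaptoro6ixo}, apply the thermodynamic formalism for countable Markov shifts (in the spirit of Sarig and of the author's prior work), and then push back down to the manifold. First I would use the finite-to-one Hölder factor map $\pi:\Sigma\to \HWT_\chi$ to lift $\varphi$ to $\tilde\varphi:=\varphi\circ\pi$, which is Hölder on $\Sigma$ thanks to the Grassmann-Hölder hypothesis. Since $\pi$ is finite-to-one, entropy and integrals are preserved between $\sigma$-invariant probability measures on $\Sigma$ and $f$-invariant ones on the manifold, so the Gurevich pressure of $\tilde\varphi$ on the irreducible component $\Sigma_p$ coding points in $H_\chi(p)$ equals $P_{H_\chi(p)}(\varphi)$. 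The key identification is that the series $\sum_{n\geq 1}\sum_{f^n(q)=q,\,q\in H_\chi(p)\cap \Lambda_l}e^{\varphi_n(q)-nP_{H_\chi(p)}(\varphi)}$ is, vertex-by-vertex in the symbolic graph lying over $\Lambda_l$, comparable to Sarig's partition function $\sum_n Z_n(\tilde\varphi,a)\lambda^{-n}$ with $\lambda=e^{P_{H_\chi(p)}(\varphi)}$; its divergence is exactly \emph{positive recurrence} of $\tilde\varphi|_{\Sigma_p}$.

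For the forward direction ($\Rightarrow$), a $\chi$-hyperbolic local equilibrium state $\mu$ can be lifted to a $\sigma$-invariant equilibrium state $\tilde\mu$ on $\Sigma_p$ by the standard measurable-section argument; Sarig's characterisation of equilibrium states on TMS forces $\tilde\varphi|_{\Sigma_p}$ to be positive recurrent, hence the series diverges. For the leaf condition, the construction of $\mathcal{F}_{H_\chi(p)}(\varphi)$ in Theorem A realises its measures as (projections of) the conditional measures of the symbolic eigenmeasure on local unstable leaves; hyperbolicity of $\mu$ combined with Birkhoff applied to $\mathbb{1}_{\Lambda_N}$ forces these conditionals to give positive mass to $\HWT_\chi^{\mathrm{PR}}$, so the leaf condition holds. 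For the converse ($\Leftarrow$), positive recurrence together with finite Gurevich pressure yields, via Sarig's generalised Ruelle–Perron–Frobenius theorem, a conservative $\sigma$-invariant measure $\tilde\mu_0$ on $\Sigma_p$ with a Gibbs-type property for $\tilde\varphi$. The leaf condition is then used as the finiteness criterion: because the invariant family measures correspond, in symbolic coordinates, to restrictions of the RPF eigenmeasure to unstable cylinders, their positive mass on $\HWT_\chi^{\mathrm{PR}}$ is equivalent to the relevant normalising integral being finite, promoting $\tilde\mu_0$ to a probability. Projecting down gives the required $\chi$-hyperbolic equilibrium state $\mu=\pi_*\tilde\mu$.

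Uniqueness follows from Sarig's uniqueness of equilibrium states on a topologically transitive positively recurrent TMS, transported through the finite-to-one coding. The conditional-measures statement is a Rokhlin disintegration computation on each Markov rectangle: the symbolic Gibbs structure factorises as a ``stable" marginal times an ``unstable" leaf measure, and the unstable factor is, by construction in Theorem A, precisely an element of $\mathcal{F}_{H_\chi(p)}(\varphi)$. The main obstacle I anticipate is the matching between the \emph{abstract} leaf condition on the manifold and the \emph{analytic} finiteness criterion for the RPF eigenmeasure on the symbolic side: this requires carefully tracking how the measures produced in Theorem A come from the Ruelle operator applied to cylinders, and verifying that the Pesin level set $\Lambda_l$ containing $p$ interacts correctly with the irreducible components of $\Sigma$ so that the series divergence, which is labelled by vertices above $\Lambda_l$, is equivalent to positive recurrence of the relevant component $\Sigma_p$ as a whole. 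Handling the non-injectivity of $\pi$ (checking that only one irreducible component carries the equilibrium state and that the conditional measures lift unambiguously) is the secondary technical nuisance.
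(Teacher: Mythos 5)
Your overall strategy---lift through the finite-to-one coding, invoke Sarig's thermodynamic formalism on the irreducible component coding $H_\chi(p)$, and descend---is essentially the paper's, and several of the technical worries you flag (the role of $\Lambda_l$, the multiplicity of $\wpi$, the matching of the leaf condition to a symbolic finiteness statement) are exactly where the real work is. But there is a genuine conceptual gap at the heart of the sketch: you identify the divergence of $\sum_{n\geq1}\sum_{f^n(q)=q,\,q\in H_\chi(p)\cap\Lambda_l}e^{\varphi_n(q)-nP_{H_\chi(p)}(\varphi)}$ with \emph{positive} recurrence of the lifted potential. That is wrong; Sarig's notion of recurrence is precisely divergence of $\sum_n e^{-nP_G}Z_n$, while positive recurrence is recurrence \emph{plus} finiteness of a first-return partition function. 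Lemma~\ref{periodicRecurrence} in the paper shows that the periodic-orbit series diverges if and only if the lifted potential $\phi$ is (merely) recurrent on $\Tigma_L$. Because of this, your sketch of the converse is internally inconsistent: if the series divergence already gave positive recurrence, the RPF measure $\psi\cdot p$ would automatically be finite and the leaf condition would be redundant. The correct logic is the paper's: series divergence gives recurrence, which by the RPF theorem produces a \emph{conservative, possibly $\sigma$-finite} invariant measure $\wh{\mu}=\int\wh{\mu}_{\ul{R}}\,dp$; only then does the leaf condition $\exists\,\mu_{\ul{R}}(\HWT_\chi^{\mathrm{PR}})>0$ enter, forcing $\mu(1)=p(\psi)<\infty$ (i.e.\ positive recurrence, via \cite[Theorem~8]{SarigPR} and a volume-lemma argument outsourced to \cite[Theorem~6.8]{SRBleaf}). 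This distinction between recurrence and positive recurrence is not a side technicality---it is the entire reason the theorem has \emph{two} hypotheses rather than one.

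A secondary omission: you lift $\varphi$ directly to $\tilde\varphi=\varphi\circ\pi$, but the two-sided lift is of no direct use for the one-sided Ruelle--Perron--Frobenius machinery. The paper first runs Sinai's theorem (Theorem~\ref{SinaiBowen}) to replace $\varphi$ by a cohomologous $\varphi^*$ whose lift $\phi$ depends on only the non-positive coordinates, with a bounded transfer function $A$; the one-sided potential $\phi:\Tigma_L\to\mathbb{R}$ is what the Ruelle operator, the harmonic function $\psi$, and the conformal measure $p$ are built from. Without that reduction, the identification of the leaf measures with restrictions of a one-sided eigenmeasure (which you correctly want for the Rokhlin-disintegration claim at the end) does not get off the ground. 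Finally, the uniqueness claim should rest on the variational characterisation of equilibrium measures on the topologically transitive component (the paper cites Buzzi--Sarig), rather than on a generic ``Sarig's uniqueness'' reference, because the finite-to-one factor $\wpi$ can blur which symbolic measure you are uniquifying; the paper resolves this by showing the lifted equilibrium is the \emph{unique} equilibrium for $\phi$ on $\Tigma$ and then projecting.
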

Note that if the assumption above holds for one $l$, then it holds for all $l'\geq l$, since $\Lambda_{l'}\supseteq \Lambda_l$.
\begin{cor*}[C]
	Local equilibrium states of $\cont$ potentials have local product structure.
\end{cor*}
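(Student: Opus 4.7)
The plan is to deduce Corollary C by applying Theorem B twice, once to the system $(f,\varphi)$ and once to the reversed system $(f^{-1},\varphi)$, and then combining the resulting unstable and stable disintegrations on a Markov rectangle.

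If $H_\chi(p)$ admits no hyperbolic local equilibrium state, the statement is vacuous, so suppose $\mu$ is one. By Theorem B, $\mu$ is unique and its conditional measures on local unstable leaves coincide with the $\varphi$-invariant family $\mathcal{F}_{H_\chi(p)}(\varphi)$ of Theorem A; by condition (3) of Definition \ref{pcwis}, these conditionals vary weak-$*$ continuously with the base leaf. Next I would apply Theorem B to $(f^{-1},\varphi)$. Every notion in \textsection\ref{HWTsection} -- $\chi$-summability, $\HWT_\chi$, the class $H_\chi(p)$, the Grassmann--H\"older condition, and $P_{H_\chi(p)}(\varphi)$ -- is symmetric under $f\leftrightarrow f^{-1}$ once stable and unstable subbundles are interchanged, and $h_\mu(f)=h_\mu(f^{-1})$. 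Hence $\mu$ is also the unique local equilibrium state of $\varphi$ on $H_\chi(p)$ for $f^{-1}$, so the ``only if'' direction of Theorem B forces both the $\varphi$-leaf condition and periodic-orbit summability to hold for $f^{-1}$, and identifies the conditional measures of $\mu$ on local stable leaves with a $\varphi$-invariant family for $f^{-1}$, again weak-$*$ continuous in the leaf.

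To conclude I would fix a Markov rectangle $R=[U^u,U^s]$ from Lemma \ref{forChaptoro6ixo} with $\mu(R)>0$ and perform Rokhlin disintegration with respect to both transverse measurable partitions of $R$ into local unstable and local stable leaves. Combining the continuity of the two invariant families with the absolute continuity of the Pesin stable and unstable holonomies on each Pesin level set, one shows that $\mu|_R$ is equivalent to the product of its projections onto the unstable and stable quotient spaces, which is exactly the statement that $\mu$ has local product structure on $R$. Covering $\mu$-almost every point by countably many such rectangles finishes the proof.

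The main obstacle is the last step: one must verify that holonomy between nearby unstable leaves inside a Markov rectangle sends measures of the invariant family $\mathcal{F}_{H_\chi(p)}(\varphi)$ to mutually absolutely continuous measures with bounded Radon--Nikodym derivatives, and symmetrically for the stable family. The transformation rule in Definition \ref{pcwis}(2) only governs behaviour under $f$, so this transverse absolute continuity is not automatic; it has to be extracted either from the explicit construction of $\mathcal{F}_{H_\chi(p)}(\varphi)$ -- most naturally by pulling back through the symbolic model of Lemma \ref{forChaptoro6ixo}, where the cylinder structure makes the product representation manifest -- or by a direct comparison with Pesin's absolute continuity theorem for the invariant foliations on Pesin level sets.
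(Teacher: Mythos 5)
Your overall strategy is sound but more elaborate than what the paper actually uses, and the point you flag as the ``main obstacle'' is already resolved by results you cite but didn't quite connect. The paper derives Corollary~(C) in one stroke from Theorem~\ref{smoothMeasures}(3): the construction of the $\varphi$-invariant family gives, for any two chains $\ul{R},\ul{S}$ in the same cylinder $[R]\subseteq\Tigma_L$, the uniformly bounded comparison $\bigl(\mu_{\ul{R}}\circ\Gamma_{\ul{R}\,\ul{S}}^{-1}\bigr)(g)=C_\phi^{\pm1}\,\mu_{\ul{S}}(g)$ under the stable holonomy $\Gamma_{\ul{R}\,\ul{S}}$ (this is projected down from Corollary~\ref{gauraluilui2}, where the holonomy is the symbolic one replacing the negative coordinates). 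Disintegrating $\mu|_R$ over the unstable leaves and using this uniform transverse comparison immediately yields $\mu|_R\sim\mu_{\ul{R}}\times\hat\mu$, i.e.\ the local product structure, without any appeal to $f^{-1}$ or to a second invariant family. So the doubling of Theorem~(B) applied to $(f^{-1},\varphi)$ is unnecessary machinery.

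The more substantive issue is your proposed second route to close the gap. ``Pesin's absolute continuity theorem'' asserts absolute continuity of stable holonomies with respect to the induced Riemannian volume on unstable leaves; it says nothing about holonomies preserving the measure class of an arbitrary conformal or $\varphi$-invariant leaf measure, which for a generic Grassmann--H\"older $\varphi$ is singular with respect to volume. Invoking it here would be a genuine error. Your first route --- extracting the holonomy absolute continuity from the symbolic construction where the cylinder structure is a product --- is exactly what the paper does (Corollary~\ref{gauraluilui2} together with Theorem~\ref{smoothMeasures}(3)); had you recognized that this is already stated and proved, the corollary would have followed immediately and the $f^{-1}$ scaffolding, the Rokhlin disintegration along stable leaves, and the appeal to Pesin's theorem could all be discarded.
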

 This follows from the absolute continuity property of the $\varphi$-invariant family we construct. This extends previous results for the Axiom A setup (see \cite{Haydn94,Leplaideur00}).
 
\begin{cor*}[D]
$(M,f)$ admits a $\chi$-hyperbolic equilibrium state for $\varphi$ if and only if there exists an ergodic homoclinic class $H_\chi(p)$ which satisfies the assumptions of the previous theorem, and in addition $P_{H_\chi(p)}(\varphi)=P_{\mathrm{top}}(\varphi)$ ($= \sup\{h_\nu(f)+\int \varphi d\nu:\nu\text{ is an }f\text{-inv. probablity on }M\}$).
\end{cor*}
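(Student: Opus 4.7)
The corollary reduces to Theorem~B applied to an appropriate ergodic homoclinic class, using only the trivial pressure monotonicity $P_{H_\chi(p)}(\varphi)\leq P_{\mathrm{top}}(\varphi)$ (since $H_\chi(p)\subseteq M$) together with the affinity of the variational expression under the ergodic decomposition.

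For the forward direction, suppose $\mu$ is a $\chi$-hyperbolic equilibrium state for $\varphi$ on $M$, so $h_\mu(f)+\int\varphi\,d\mu=P_{\mathrm{top}}(\varphi)$. Write the ergodic decomposition $\mu=\int\mu_x\,d\mu(x)$. Affinity of entropy (standard for $C^{1+\beta}$ diffeomorphisms of a compact manifold) and linearity of $\nu\mapsto\int\varphi\,d\nu$ give
\begin{equation*}
P_{\mathrm{top}}(\varphi) \;=\; \int\Bigl(h_{\mu_x}(f)+\int\varphi\,d\mu_x\Bigr)d\mu(x),
\end{equation*}
while each integrand is bounded above by $P_{\mathrm{top}}(\varphi)$. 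Hence $\mu$-almost every ergodic component is itself an equilibrium state; and since $\chi\text{-}\mathrm{summ}$ is $f$-invariant with $\mu(\chi\text{-}\mathrm{summ})=1$, $\mu$-a.e.\ ergodic component is also $\chi$-hyperbolic. Fix such an ergodic $\chi$-hyperbolic equilibrium state $\nu$.

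By Claim~\ref{tobewritten}, $\nu$ is carried by a unique ergodic homoclinic class $H_\chi(p)$. Pressure monotonicity now sandwiches
\begin{equation*}
P_{\mathrm{top}}(\varphi)\;=\;h_\nu(f)+\int\varphi\,d\nu\;\leq\; P_{H_\chi(p)}(\varphi)\;\leq\; P_{\mathrm{top}}(\varphi),
\end{equation*}
so $P_{H_\chi(p)}(\varphi)=P_{\mathrm{top}}(\varphi)$ and $\nu$ is a $\chi$-hyperbolic local equilibrium state on $H_\chi(p)$. Any periodic $\chi$-hyperbolic point sits in some Pesin level set $\Lambda_l$ (its orbit carries only finitely many values of $\|C_\chi^{-1}\|$, so the constant function $q\equiv\min_k Q_\epsilon(f^k(p))$ is a recurrently $\epsilon$-weakly tempered kernel on the orbit), so after relabeling $p$ by such a representative of the same homoclinic class we are in position to invoke Theorem~B. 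Its ``only if'' direction then delivers both the periodic-orbit summability condition $\sum_{n\geq 1}\sum_{f^n(q)=q,\,q\in H_\chi(p)\cap\Lambda_l} e^{\varphi_n(q)-n\cdot P_{H_\chi(p)}(\varphi)}=\infty$ and the $\varphi$-leaf condition for $H_\chi(p)$.

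The backward direction is immediate from Theorem~B: if $H_\chi(p)$ satisfies the hypotheses of Theorem~B and $P_{H_\chi(p)}(\varphi)=P_{\mathrm{top}}(\varphi)$, Theorem~B furnishes a $\chi$-hyperbolic local equilibrium state $\mu$ on $H_\chi(p)$ with $h_\mu(f)+\int\varphi\,d\mu=P_{H_\chi(p)}(\varphi)=P_{\mathrm{top}}(\varphi)$, which identifies $\mu$ as a $\chi$-hyperbolic equilibrium state for $(M,f)$. The only mildly delicate step is the ergodic decomposition in the forward direction, where one needs both affinity of entropy and the fact that $\chi$-hyperbolicity (i.e.\ being carried by $\chi\text{-}\mathrm{summ}$) passes to $\mu$-almost every ergodic component; both are standard, and the remainder of the argument is pure bookkeeping on top of Theorem~B and pressure monotonicity.
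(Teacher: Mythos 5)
Your argument is correct, and since the paper states Corollary~D without an explicit proof, presenting it as an immediate consequence of Theorem~B, the route you take (ergodic decomposition plus affinity of entropy to extract an ergodic $\chi$-hyperbolic component achieving $P_{\mathrm{top}}(\varphi)$, Claim~\ref{tobewritten} to locate it in a homoclinic class, pressure monotonicity to pin down $P_{H_\chi(p)}(\varphi)=P_{\mathrm{top}}(\varphi)$, and then both directions of Theorem~B) is precisely the implicit argument the author intends. Your small observation that any periodic $\chi$-hyperbolic point automatically lies in some $\Lambda_l$ (take the constant tempered kernel $q\equiv\min_k Q_\epsilon(f^k(p))$) correctly discharges the lone hypothesis of Theorem~B that might otherwise look like an extra assumption.
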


\begin{theorem*}[E]
Let $\varphi:WT_\chi^\epsilon\rightarrow\mathbb{R}$ be a 
$\cont$ potential. Let $H_\chi(p)$ be an ergodic homoclinic class of a $\chi$-hyperbolic periodic point $p$ which admits a (unique) $\chi$-hyperbolic equilibrium state for $\varphi$
, $\nu_\varphi$. Assume that $H_\chi(p)$ admits a $\varphi$-conformal system of measures. Let $\mathcal{F}_{H_\chi(p)}(\varphi)$ be any $\varphi$-invariant family of conditional leaf measures as in Theorem $\mathrm{(A)}$, then there is a sub-family $\mathcal{F}'$ which disintegrates $\nu_\varphi$ (as in Theorem $\mathrm{(B)}$), and for which the following holds:
For any $\mu\in\mathcal{F}
'$, given a local unstable leaf $V^u_{\mathrm{loc}}$ which carries $\mu$, and a corresponding $\varphi$-conformal measure on $V^u_{\mathrm{loc}}$, $m^\varphi_{ V^u_\mathrm{loc}}$, there exists a measurable set of full $\mu$-measure, $E_\mu$, s.t $\mu\sim m^\varphi_{ V^u_\mathrm{loc}}|_{E_\mu}$. In particular, $m^\varphi_{ V^u_\mathrm{loc}} (\HWT_\chi^{\mathrm{PR}})>0$.
\end{theorem*}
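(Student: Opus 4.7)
The plan is to show that the leaf measure $\mu \in \mathcal{F}'$ and the conformal measure $m^\varphi := m^\varphi_{V^u_{\mathrm{loc}}}$ both satisfy a Gibbs-type estimate on dynamical balls in $V^u_{\mathrm{loc}}$, so that their Radon-Nikodym derivative is pinched between two positive constants on a set of positive $\mu$-measure inside $\HWT_\chi^{\mathrm{PR}}$; the equivalence will then propagate to a full-$\mu$-measure set $E_\mu$ via the ergodicity of $\nu_\varphi$ and the $f$-equivariance built into Definition~\ref{pcwis}(2). The crucial point is that the transformation rules of Definition~\ref{Cara}(2) and Definition~\ref{pcwis}(2) differ only by the measurable coboundary $A - A\circ f^{-1}$, so under iteration the Birkhoff sums $\varphi_n - n\,P_{H_\chi(p)}(\varphi)$ cancel exactly in the ratio $d\mu/dm^\varphi$.

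First I would identify $\mathcal{F}'$: Theorem~(B) furnishes the disintegration of $\nu_\varphi$ by the invariant family, and I take $\mathcal{F}'$ to be the sub-family of members charging $\HWT_\chi^{\mathrm{PR}}$; this is non-empty by the $\varphi$-leaf condition forced by Theorem~(B), and Definition~\ref{pcwis}(2) ensures $\mathcal{F}'$ is closed under iteration up to scalar multiples. Next, fix $\mu \in \mathcal{F}'$ supported on $V^u_{\mathrm{loc}}$, and for $x \in V^u_{\mathrm{loc}} \cap \HWT_\chi^{\mathrm{PR}}$ with a positive-frequency return sequence $n_k$ into some Pesin level set $\Lambda_l$, consider the dynamical balls $B_{n_k}(x) := f^{-n_k}[V^u_{\mathrm{loc}}(f^{n_k}(x))]$, which shrink to $x$ through unstable neighborhoods of uniformly bounded geometry along $n_k$. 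Iterating Definition~\ref{pcwis}(2) yields
\[
\mu(B_{n_k}(x)) \;\asymp\; \exp\bigl(\varphi_{n_k}(x) - n_k P_{H_\chi(p)}(\varphi) + A(x) - A(f^{-n_k}(x))\bigr),
\]
with the coboundary factor bounded above and below after passing to a further subsequence, because $A$ is $\mu$-a.e.\ finite and $f^{-n_k}(x)$ can be arranged to stay inside a compact sub-level set of $A$ along a density-one subsequence. Iterating Definition~\ref{Cara}(2), using Grassmann-H\"older continuity of $\varphi$ to bound its oscillation along Bowen balls, gives
\[
m^\varphi(B_{n_k}(x)) \;\asymp\; e^{\varphi_{n_k}(x) - n_k P_{H_\chi(p)}(\varphi)} \cdot m^\varphi_{V^u_{\mathrm{loc}}(f^{n_k}(x))}\bigl(V^u_{\mathrm{loc}}(f^{n_k}(x))\bigr),
\]
and the total-mass factor is bounded between two positive constants by Definition~\ref{Cara}(1) applied to a $C^1$-convergent sub-subsequence of $V^u_{\mathrm{loc}}(f^{n_k}(x))$ extracted from the compact family of unstable leaf jets through $\Lambda_l$.

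Dividing the two estimates on a subset $E_0 \subset V^u_{\mathrm{loc}} \cap \HWT_\chi^{\mathrm{PR}}$ of positive $\mu$-measure yields uniformly bounded ratios $\mu(B_{n_k})/m^\varphi(B_{n_k})$. Because the balls $B_{n_k}(x)$ form a Vitali-type differentiation basis on the unstable leaf with bounded distortion, a standard density argument promotes this to bounded $d\mu/dm^\varphi$ at $\mu$-a.e.\ point of $E_0$, hence $\mu|_{E_0} \sim m^\varphi|_{E_0}$. To upgrade to a full-$\mu$-measure set: the $f$-equivariance of Definition~\ref{pcwis}(2) distributes $\mu|_{E_0}$ onto other members of $\mathcal{F}'$ on nearby leaves with the analogous local equivalence, and ergodicity of $\nu_\varphi$ (together with the disintegration from Theorem~(B)) forces $\mu$-a.e.\ point of $V^u_{\mathrm{loc}}$ to be a forward image of such a point, yielding a set $E_\mu$ of full $\mu$-measure with $\mu \sim m^\varphi|_{E_\mu}$ and consequently $m^\varphi(\HWT_\chi^{\mathrm{PR}}) \geq m^\varphi(E_\mu) > 0$.

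The main obstacle is the simultaneous control of three independent distortions along the return subsequence: the H\"older distortion of $\varphi$ along shrinking Bowen balls (where Grassmann-H\"older regularity is sharper than ordinary H\"older because it tracks the unstable bundle), the $C^1$-variation of the leaves $V^u_{\mathrm{loc}}(f^{n_k}(x))$ required to invoke Definition~\ref{Cara}(1), and the oscillation of the \emph{merely measurable} coboundary $A$ from Definition~\ref{pcwis}(2). Restricting to Pesin level sets along a positive-density subsequence is what tames all three simultaneously, and matching the resulting Gibbs-type estimates to the disintegration of $\nu_\varphi$ supplied by Theorem~(B) is what delivers the pointwise equivalence on $E_\mu$.
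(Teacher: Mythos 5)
Your outline takes a genuinely different route from the paper's: you work geometrically on the unstable leaf, trying to derive two-sided Gibbs estimates for $\mu(B_{n_k}(x))$ and $m^\varphi(B_{n_k}(x))$ on dynamical balls and then invoke a Vitali-type differentiation argument, whereas the paper works entirely in the symbolic space $\Tigma_L$ and compares the two measures against Lipschitz test functions, concluding $\mu_{\ul{R}}\leq C\,m^\varphi_{V^u(\ul{R})}$ by Riesz--Markov. The obstruction your sketch does not address is precisely the one the paper flags in the remark after Corollary~(D): iterating Definition~\ref{pcwis}(2) gives $\mu\circ f^{-n}=e^{-nP}\sum_j e^{\phi_n(\cdot)}\mu^{(j)}$, so $\mu(B_{n_k}(x))$ is a \emph{sum} over many pulled-back leaf measures $\mu^{(j)}$ whose supports $V^u(\ul{S}')$ can all intersect $V^u_{\mathrm{loc}}(f^{n_k}(x))$ at $x$, and nothing in the abstract definition bounds how many terms contribute. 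Your Gibbs-type estimate $\mu(B_{n_k}(x))\asymp e^{\varphi_{n_k}(x)-n_kP+A(x)-A(f^{-n_k}(x))}$ therefore does not follow from Definition~\ref{pcwis}(2) alone. The paper controls this overlap by the coding-multiplicity bound $K=N(R_0)\cdot N(S)$ from \cite[Theorem~1.3]{SBO}, which is available only because one works with a chain $\ul{R}\in\Tigma_L^\#$ and the finite-to-one factor map $\wpi$ on $\Sig^\#$; this is what \eqref{forVolLemma} encodes in the density $\rho_n\leq K$. Without that input, the upper bound $\mu(B_{n_k})\lesssim e^{\varphi_{n_k}-n_kP}$ can fail.

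There are two further issues. First, your passage from ``pinched ratios on dynamical balls'' to pointwise boundedness of $d\mu/dm^\varphi$ requires the family $\{B_{n_k}(x)\}$ to be a differentiation basis for $m^\varphi$, i.e.~a Besicovitch/Vitali covering or doubling property for $m^\varphi$ on the leaf. The hypotheses of Definition~\ref{Cara} give no such regularity, and the paper deliberately sidesteps this by never differentiating: it proves the one-sided domination $K\,m^\varphi_{V^u(\ul{R})}(g)\geq\wt{C}_{R_0}^{-1}\wh{\nu}([R_0])\,\mu_{\ul{R}}(g)$ for Lipschitz $g$ and extends by density. Second, for the claimed equivalence $\mu\sim m^\varphi|_{E_\mu}$ (not merely $\mu\ll m^\varphi$) you need a lower bound on $\mu(B_{n_k})/m^\varphi(B_{n_k})$ as well; your sketch asserts pinching from both sides, but the absolute-continuity step you describe only uses the upper bound, and your globalization-by-ergodicity paragraph does not supply the reverse inequality. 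In the paper this comes from identifying $E_{\ul{R}}^{R_0}$ concretely through the coding and observing that on it $\rho_n\geq 1$ eventually, so that the sandwich $\rho_n\,m^\varphi$ is comparable to both measures along the same subsequence. Finally, a minor point: $A$ in Theorem~\ref{SinaiBowen} is bounded, so your machinery for taming ``the merely measurable coboundary'' addresses a non-issue, while the genuine distortion problems (coding multiplicity, leaf overlaps) are left open.
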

The last property holds in particular when $\varphi$ is unstable-$\cont$ and is defined on unstable leaves, and thus we a-priori have no reason to know that its conformal measures ``see" the hyperbolic points.

If $\varphi$ is the geometric potential then the $\varphi$-conformal measures exist and are the induced Riemannian volume of the unstable leaves, and this theorem proves that every $\chi$-hyperbolic equilibrium state of the geometric potential with pressure $0$, has absolutely continuous conditional measures (and thus is an SRB measure). This is a new proof to the result by Ledrappier and Young in \cite{LedrappierYoungI}, when restricting to hyperbolic measures; and extends it to additional potentials.

\noindent\textbf{Remark:} In several of the main results, we use the assumption $\sum\limits_{n\geq1}\sum\limits_{f^n(q)=q,q\in H_\chi(p)\cap \Lambda_l}e^{\varphi_n(q)-n\cdot P_{H_\chi(p)}(\varphi)}$ $=\infty$ for some hyperbolic periodic point $p$ which belongs to a level set $\Lambda_{l}$ (where $\varphi:\WT_\chi^\epsilon\rightarrow\mathbb{R}$ is a 
$\cont$ potential). This assumption is used in order to show that $\varphi$ is recurrent (see \textsection \ref{rec1}) when lifted to an irreducible component which codes $H_\chi(p)$ (see \textsection \ref{mycorona}). This assumption is always used together with the leaf condition (see Definition \ref{leafco}). We do not know if the first assumption can be omitted (i.e does the leaf condition imply recurrence of the lifted potential). We were not able to prove that using the methods of this paper. The difference between this paper and \cite{SRBleaf}, where the leaf condition is sufficient, is the overlapping property of leaf measures (i.e the Lebesgue measures of two unstable leaves which intersect, coincide on the intersection). Here, we use a coding to construct our $\varphi$-invariant family of leaf measures, and in order to gain a bound on the overlapping of measures, we must use the bound on the multiplicity of the coding map $\wpi$; which in turn is only bounded on the recurrent part of the symbolic space in our construction; hence the need for recurrence. There are other constructions of codings with better multiplicity bounds for the coding map (e.g \cite{BuzziCoding}), though those multiplicity improvements come on expense of other properties which are necessary for this paper.

\section{Preliminary Constructions}\label{symbolicdynamicspart}
\subsection{Symbolic Dynamics}
Sinai \cite{SinaiMP} and Adler and Weiss \cite{AW70} constructed Markov partitions and for Anosov diffeomorphims and linear toral automorphisms, respectively. In \cite{Sarig} Sarig constructed a Markov partition for non-uniformly hyperbolic surface diffeomorphisms. We extended his results to manifolds of any dimension greater or equal to 2 in \cite{SBO}. In \cite{LifeOfPi} we modify the codings from \cite{SBO,Sarig} and introduce the set $\HWT_\chi$. $\HWT_\chi$ is defined canonically, and still consists of all recurrently-codable points (see Proposition \ref{cafeashter}) in the modified symbolic dynamics. In the following section we review those results.

Since $M$ is compact, $\exists r=r(M)>0,\rho=\rho(M)>0$ s.t the exponential map $\exp_x: \{v\in T_xM:|v|\leq r\}\rightarrow B_\rho(x)=\{y\in M: d(x,y)<\rho\}$ is injective.
\begin{definition}[Pesin-charts]\label{PesinCharts} When $\epsilon\leq r$, the following is well defined since $C_\chi(\cdot)$ is a contraction (see \cite[Lemma~2.9]{SBO}):
\begin{enumerate}
	\item $\psi_x^\eta:=\exp_x\circ C_\chi(x):\{v\in T_xM:|v|\leq \eta\}\rightarrow B_\rho(x)$, $\eta\in (0,Q_\epsilon(x)]$ is called a {\em Pesin-chart}.
	\item A {\em double Pesin-chart} is an ordered couple $\psi_x^{p^s, p^u}:=(\psi_x^{p^s}, \psi_x^{p^u})$, where $\psi_x^{p^s}$ and $\psi_x^{p^u}$ are Pesin-charts. 
\end{enumerate} 
\end{definition}

\begin{theorem}\label{mainSBO}
	 $\forall \chi>0$ s.t there exists a $\chi$-hyperbolic periodic point $p$, $\exists$ a countable and locally-finite\footnote{Finite out-going and in-going degree at each vertex.} directed graph $\mathcal{G}= \left(\mathcal{V},\mathcal{E}\right)$ which induces a topological Markov shift $
\Sigma:=\{\ul{u}\in\mathcal{V}^\mathbb{Z}:
(u_i,u_{i+1})\in	\mathcal{E},\forall i\in\mathbb{Z}\}$ s.t $\Sigma$ admits a factor map $\pi:\Sigma\rightarrow M$ with the following properties:
	 \begin{enumerate}
	 	\item $\sigma:\Sigma\rightarrow\Sigma$ is defined by $(\sigma \ul{u})_i:=u_{i+1}$, $i\in \mathbb{Z}$ (the left-shift); and satifies $\pi\circ\sigma=f\circ\pi$.
	 	\item $\pi$ is a H\"older continuous map w.r.t the metric $d(\ul{u},\ul{v}):=\exp\left(-\min\{i\geq0:u_i\neq v_i\text{ or } u_{-i}\neq v_{-i}\}\right)$.
	 	\item Let $\Sigma^\#:=\left\{\ul{u}\in \Sigma:\exists n_k,m_k\uparrow\infty\text{ s.t }u_{n_k}=u_{n_0}, u_{-m_k}=u_{-m_0},\forall k\geq0\right\}$, 
	 then $\pi[\Sigma^\#]$ carries all $f$-invariant, $\chi$-hyperbolic probability measures.
	 \end{enumerate}
\end{theorem}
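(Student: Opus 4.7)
\medskip

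\noindent\textbf{Proof proposal for Theorem \ref{mainSBO}.} The plan is to follow the Sarig--type construction of Markov partitions for nonuniformly hyperbolic diffeomorphisms, as developed in \cite{Sarig} for surfaces and extended to all dimensions in \cite{SBO}, with the modifications introduced in \cite{LifeOfPi} that yield a \emph{locally finite} (rather than only countable) graph.

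First, I would build a countable alphabet of \emph{double Pesin charts}. Using Theorem \ref{pesinoseledec} and the discretized parameter $Q_\epsilon(x)\in\{e^{-\ell\epsilon/3}\}_{\ell\in\mathbb{N}}$ from Definition \ref{littleQ}, one extracts a countable set $\mathcal{V}$ of ``representative'' double charts $\psi_x^{p^s,p^u}$ such that every $x\in\HWT_\chi^\epsilon$ has its Pesin-chart at each time $n$ close (in the chart parameters, and with the images of the stable/unstable subspaces close in the Grassmannian) to some $v\in\mathcal{V}$. Discretizing in this manner is where the recurrence hypothesis (3) of Definition \ref{temperable} becomes essential, since it guarantees that the frame parameters $q(f^n x)$ do not decay to $0$, so each point visits a compact family of chart parameters infinitely often in both directions.

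Second, I would define the edge set $\mathcal{E}$ by an ``$\epsilon$-overlap / $\epsilon$-admissibility'' relation: $(v,v')\in\mathcal{E}$ iff, roughly, $f\circ\psi_v$ and $\psi_{v'}$ agree up to a small perturbation that keeps the hyperbolic graph transform a contraction in $C^1$. The key lemmas to verify are: (i) each vertex has finite in/out-degree — this is the \emph{locally finite} improvement obtained in \cite{LifeOfPi}, and follows from the discretization of the tempered kernel and of the Grassmannian data; (ii) there is a well-defined stable/unstable manifold (admissible graph) attached to every one-sided admissible sequence, with uniform hyperbolic estimates in the charts. Let $\Sigma=\Sigma(\mathcal{G})$ be the resulting TMS. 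Item (1) of the theorem is then automatic from the definition of the left shift.

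Third, I would use the hyperbolic graph transform in the Pesin charts to show that, for each $\underline{u}\in\Sigma$, the intersection of the forward admissible stable manifolds with the backward admissible unstable manifolds is a unique point $\pi(\underline{u})\in M$, and that $\pi\circ\sigma=f\circ\pi$. Hölder continuity of $\pi$ (item (2)) comes from the uniform contraction of the graph transform on cylinders $[u_{-N},\ldots,u_N]$: two sequences agreeing on $[-N,N]$ force their shadowing points to lie in nested admissible manifolds whose diameter shrinks geometrically in $N$. For item (3), every $x\in\HWT_\chi$ has, by recurrent weak temperability, a bi-infinite sequence of chart parameters landing in a compact set infinitely often in both directions; picking the nearest $v_n\in\mathcal{V}$ at each $n$ and checking $\epsilon$-admissibility on each edge yields $\underline{u}\in\Sigma^\#$ with $\pi(\underline{u})=x$. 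Since any $\chi$-hyperbolic invariant probability is carried by $\HWT_\chi$, this gives $\pi[\Sigma^\#]\supseteq \HWT_\chi$ modulo all such measures.

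The main obstacle, and the step I expect to require the most care, is constructing the countable, \emph{locally finite} alphabet so that every recurrently weakly tempered orbit is coded, while still keeping the admissibility and shadowing estimates uniform. In the surface case of \cite{Sarig} the alphabet parameters are one-dimensional; in higher dimension one must discretize the stable/unstable \emph{subspaces} (in the Grassmannian) and the separate stable/unstable chart radii $p^s,p^u$ while maintaining the coupling between them needed for the graph transform. This is precisely where I would invoke the framework of \cite{SBO,LifeOfPi}, where the compatible discretization is worked out in detail and produces the local finiteness of $\mathcal{G}$. A standard Bowen--Sinai refinement of the cover $\{\pi[[v]]\}_{v\in\mathcal{V}}$ then turns the coding into a Markov partition, completing the proof.
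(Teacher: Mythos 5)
The paper does not prove Theorem~\ref{mainSBO} itself: immediately after the statement it is cited as ``the content of \cite[Theorem~3.13]{SBO} (and similarly, the content of \cite[Theorem~4.16]{Sarig} when $d=2$),'' so there is no internal argument in this manuscript for you to be compared against. Your sketch is a faithful high-level account of the Sarig--Ben Ovadia construction from those references: coarse-grain Pesin charts (Definition~\ref{PesinCharts}) into a countable, discrete alphabet $\mathcal{V}$ of double charts, declare an edge when a graph-transform-compatible overlap holds, and shadow each admissible chain by a unique point $\pi(\underline u)$, with the exponential contraction of nested admissible manifolds on cylinders giving the H\"older modulus in item~(2).

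Two misattributions are worth flagging. Local finiteness of $\mathcal{G}$ is already a feature of the constructions in \cite{Sarig,SBO}; it follows from the discreteness and sufficiency of the coarse-grained chart collection (compare Claim~\ref{localfinito}, which is cited precisely to those papers), not from \cite{LifeOfPi}. What \cite{LifeOfPi} contributes is the \emph{canonical} characterization of the coded set, $\pi[\Sigma^\#]=\HWT_\chi$ (Proposition~\ref{cafeashter}), with $\HWT_\chi$ defined intrinsically via recurrent $\epsilon$-weak temperability. Relatedly, item~(3) of Theorem~\ref{mainSBO} only asserts that $\pi[\Sigma^\#]$ \emph{carries} all $\chi$-hyperbolic invariant probabilities; in \cite{SBO} that is established by a Pesin-tempering and Poincar\'e-recurrence argument along $\mu$-typical orbits and does not require the sharper identification with $\HWT_\chi$. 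Your outline, by arguing that every $x\in\HWT_\chi$ admits a coding in $\Sigma^\#$, proves a stronger \cite{LifeOfPi}-level statement than the one at hand, and correspondingly assigns recurrent weak temperability (Definition~\ref{temperable}(3)) a more central role at this stage than it has in the cited result.
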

This theorem is the content of \cite[Theorem~3.13]{SBO} (and similarly, the content of \cite[Theorem~4.16]{Sarig} when $d=2$). $\mathcal{V}$ is a collection of double Pesin-charts (see 
Definition \ref{PesinCharts}), 
which is discrete
.

\begin{definition}\label{Doomsday}
\text{ }
 
\medskip
\begin{enumerate}
	\item $\forall u\in \mathcal{V}$, $Z(u):=\pi[\{\underline{u}\in\Sigma^\#:u_0=u\}]$, $\mathcal{Z}:=\{Z(u):u\in\mathcal{V}\}$.
	\item $\mathcal{R}$ is defined to be a countable partition of $\bigcup\limits_{v\in\mathcal{V}}Z(v)=\pi[\Sigma^\#]$ s.t 
	\begin{enumerate}
	\item $\mathcal{R}$ is a refinement of $\mathcal{Z}$: $\forall Z\in\mathcal{Z},R\in\mathcal{R}$, $R\cap Z\neq\varnothing\Rightarrow R\subseteq Z$.
	\item $\forall v\in\mathcal{V}$, $\#\{R\in\mathcal{R}:R\subseteq Z(v)\}<\infty$ (\cite[\textsection~11]{Sarig}).
	\item The rectangles property: $\forall R\in\mathcal{R}$,$\forall x,y\in R$ $\exists ! z:=[x,y]_R\in R$, s.t $\forall i\geq0, R(f^i(z))= R(f^i(y)), R(f^{-i}(z))= R(f^{-i}(x))$, where $R(t):=$the unique partition member of $\mathcal{R}$ which contains $t$, for $t\in\pi[\Sigma^\#]$.
	\end{enumerate}
	\item $\forall R,S\in\mathcal{R}$, we say $R\rightarrow S$ if $R\cap f^{-1}[S]\neq\varnothing$, and let $\widehat{\mathcal{E}}:=\{(R,S)\in\mathcal{R}^2\text{ s.t }R\to S\}$.
	\item $\Sig:=\{\ul{R}\in\mathcal{R}^{\mathbb{Z}}: R_i\rightarrow R_{i+1},\forall i\in\mathbb{Z}\}$.
\end{enumerate}	
\end{definition}
\noindent\textbf{Remark:} Given $\mathcal{Z}$, the definition of $\mathcal{R}$ is proper, since such a refining partition as $\mathcal{R}$ exists by the Bowen-Sinai refinement, see \cite[\textsection~11.1]{Sarig}. By property (2)(b), and since $\Sigma$ is locally-compact (see Theorem \ref{mainSBO}, local-finiteness of $\mathcal{G}$ implies local-compactness of $\Sigma$), $\Sig$ is also locally-compact.
\begin{definition}\label{sigmasharp}\text{ }

\medskip
\begin{enumerate}
    \item $\Sig^\#:=\{\ul{R}\in\Sig:\exists n_k,m_k\uparrow\infty\text{ s.t }R_{n_k}=R_{n_0},R_{-m_k}=R_{-m_0},\forall k\geq0\}$.
    \item Two partition members $R,S\in \mathcal{R}$ 
are said to be {\em affiliated} if $\exists u,v\in\mathcal{V}$ s.t $R\subseteq Z(u), S\subseteq Z(v)$ and $Z(u)\cap Z(v)\neq\varnothing$ (this terminology is due to O. Sarig, \cite[\textsection~12.3]{Sarig}).
\end{enumerate}
\end{definition}
\begin{claim}[Local finiteness of the cover $\mathcal{Z}$]\label{localfinito}
	$\forall Z\in\mathcal{Z}$,$\#\{Z'\in\mathcal{Z}:Z'\cap Z\neq\varnothing\}<\infty$.
\end{claim}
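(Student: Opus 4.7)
The plan is to exploit the discreteness of the parameter set from which vertices in $\mathcal{V}$ are drawn, together with a ``comparability of parameters'' phenomenon forced by any two $Z$-sets that intersect. First I would recall that each $u \in \mathcal{V}$ is a double Pesin chart $\psi_{x_u}^{p^s_u,p^u_u}$, with the window parameters $p^s_u,p^u_u$ taking values in the discrete set $\{e^{-\ell\epsilon/3}\}_{\ell\in\mathbb{N}}$ and bounded above by $Q_\epsilon(x_u)$, and that $Z(u) \subseteq \psi_{x_u}(\{v:|v|\le \min(p^s_u,p^u_u)\})$, so in particular $\mathrm{diam}(Z(u))$ is controlled by a fixed multiple of $\min(p^s_u,p^u_u)$.

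Second I would prove the key geometric step: if $Z(u)\cap Z(v)\neq\varnothing$, then $p^s_v/p^s_u$ and $p^u_v/p^u_u$ lie in $[e^{-2\epsilon},e^{2\epsilon}]$ (up to a dimensional constant), and moreover $d(x_u,x_v) \leq C\,\min(p^s_u,p^u_u,p^s_v,p^u_v)$ for a constant $C$ depending only on $M,f,\chi$. The argument is standard in the Sarig-type construction: pick $y\in Z(u)\cap Z(v)$ and two preimages $\underline u,\underline v\in\Sigma^\#$ with $u_0=u$, $v_0=v$ and $\pi(\underline u)=\pi(\underline v)=y$. The fact that both chains are admissible forces the chart parameters at coordinate $0$ to each equal, up to a bounded multiplicative factor, the common ``hyperbolicity gauge'' along the orbit of $y$ (the quantity $Q_\epsilon(f^i(y))$ propagated via the tempering relation $q\circ f=e^{\pm\epsilon}q$). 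Comparing these two expressions for $y$ yields the claimed comparability of the two parameter pairs and closeness of the centers.

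Third, I would invoke the discreteness of $\mathcal{V}$: for any fixed pair of admissible parameters $(p^s,p^u)$ and any bounded region of $M$, only finitely many vertices of $\mathcal{V}$ with these parameters can have their center inside that region. This is the same discreteness property that underlies the local-finiteness of the graph $\mathcal{G}$ in Theorem \ref{mainSBO} and is proved in \cite[\textsection 4]{Sarig}, \cite[\textsection 3]{SBO}. Combining it with Step~2, the parameters of any $v$ with $Z(v)\cap Z(u)\neq\varnothing$ are forced to take one of only finitely many values (the comparability $p^{s/u}_v=e^{\pm 2\epsilon} p^{s/u}_u$ selects only a bounded range of integer indices $\ell$), and for each such pair of parameters only finitely many candidate centers exist; hence $\#\{Z'\in\mathcal{Z}:Z'\cap Z\neq\varnothing\}<\infty$.

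The main obstacle is the comparability step. It requires carefully propagating the Pesin/tempering estimates along the shared orbit of a point in $Z(u)\cap Z(v)$, and using that the admissibility of a chain in $\Sigma$ couples the chart parameter at every coordinate to $Q_\epsilon$ at the corresponding iterate. Once this quantitative comparison is in hand, the reduction to the discreteness of $\mathcal{V}$ is mechanical.
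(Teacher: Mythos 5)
The paper does not give a self-contained proof of this claim; it simply cites \cite[Theorem~5.2]{SBO} (and \cite[Theorem~10.2]{Sarig} for $d=2$). Your sketch accurately reproduces the strategy used in those references: the ``inverse problem'' comparability estimate (two intersecting $Z$-sets force their double Pesin-chart parameters and centers to be close, via the coupling of admissible chart parameters to $Q_\epsilon$ along the shared orbit of a common point) combined with the discreteness of the chart alphabet $\mathcal{V}$, so the proposal takes essentially the same approach as the cited proof.
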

This claim is the content of \cite[Theorem~5.2]{SBO} (and similarly \cite[Theorem~10.2]{Sarig} when $d=2$).
\noindent\textbf{Remark:} 
By Claim \ref{localfinito} and Definition \ref{Doomsday}(2)(b), it follows that every partition member of $\mathcal{R}$ has only a finite number of partition members affiliated to it. 

\begin{definition}\label{N_R}
	Let $R\in\mathcal{R}$, $N(R):=\#\{S\in \mathcal{R}: S\text{ is affiliated to }R\}$.
\end{definition}
\begin{theorem}
	Given $\Sig$ from Definition \ref{Doomsday}, there exists a factor map $\widehat{\pi}:\Sig\rightarrow M$ s.t 
	\begin{enumerate}
	\item $\widehat{\pi}$ is H\"older continuous w.r.t the metric $d(\ul{R},\ul{S})=\exp\left(-\min\{i\geq0: R_i\neq S_i\text{ or }R_{-i}\neq S_{-i}\}\right)$.
	\item $f\circ\widehat{\pi}=\widehat{\pi}\circ\sigma$, where $\sigma$ denotes the left-shift on $\Sig$.
	\item $\widehat{\pi}|_{\Sig^\#}$ is finite-to-one.
	\item $\forall \ul{R}\in \Sig$, $\widehat{\pi}(\ul{R})\in \overline{R_0}$.
	\item $\widehat{\pi}[\Sig^\#]$ carries all $\chi$-hyperbolic invariant probability measures.
	\end{enumerate}
\end{theorem}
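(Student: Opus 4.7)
The plan is to construct $\widehat{\pi}$ by a nested-intersection argument that inherits its regularity from the H\"older continuity of $\pi$ in Theorem~\ref{mainSBO}. Fix $\ul{R}\in\Sig$. Each $R_i\in\mathcal{R}$ lies inside some $Z(v_i)\in\mathcal{Z}$, and the admissibility $R_i\to R_{i+1}$ produces points $y_i\in R_i\cap f^{-1}[R_{i+1}]$; applying the rectangle property of Definition~\ref{Doomsday}(2)(c) to the finite sequences $y_{-n},\ldots,y_n$ yields a bracket $z_n\in R_0$ with $f^i(z_n)\in R_i$ for $|i|\leq n$. Because $R_i\subseteq Z(v_i)$, the point $z_n$ lies in $\pi(\{\ul{u}:u_i=v_i,\ |i|\leq n\})$, a set whose diameter decays like $e^{-\alpha n}$ by H\"older continuity of $\pi$. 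Hence $(z_n)$ is Cauchy, and I define $\widehat{\pi}(\ul{R}):=\lim_n z_n\in\overline{R_0}$, which gives properties (4) and, by the same diameter estimate applied to pairs $\ul{R},\ul{S}$ agreeing on $|i|\leq N$, the H\"older bound (1). Property (2) is immediate because the shift on $\Sig$ translates the defining intersection by one coordinate and $\pi\circ\sigma=f\circ\pi$ is inherited pointwise.

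The main work is the finite-to-one assertion (3). Given $x=\widehat{\pi}(\ul{R})$ with $\ul{R}\in\Sig^\#$, I would extract a sequence $\ul{v}\in\mathcal{V}^{\mathbb{Z}}$ by choosing $v_i\in\mathcal{V}$ with $R_i\subseteq Z(v_i)$. The recurrence $R_{n_k}=R_{n_0}$ from Definition~\ref{sigmasharp}(1), combined with the finite affiliation count provided by Claim~\ref{localfinito}, permits a pigeonhole refinement of the $v_i$'s so that $v_{n_k}=v_{n_0}$ along a subsequence, placing $\ul{v}$ in $\Sigma^\#$; admissibility $R_i\to R_{i+1}$ forces $v_i\to v_{i+1}$ once the $v_i$'s are chosen compatibly, and $\pi(\ul{v})=x$ by construction. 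Thus any $\ul{R}\in\widehat{\pi}^{-1}(x)\cap\Sig^\#$ lifts to some $\ul{v}\in\pi^{-1}(x)\cap\Sigma^\#$, a finite set by Theorem~\ref{mainSBO}(3). Conversely, each such $\ul{v}$ descends to at most finitely many sequences in $\Sig$, since Definition~\ref{Doomsday}(2)(b) forces $\#\{R\in\mathcal{R}:R\subseteq Z(v)\}<\infty$ for every $v$. Composing these two finite counts bounds $\#(\widehat{\pi}^{-1}(x)\cap\Sig^\#)$.

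For (5), let $\mu$ be a $\chi$-hyperbolic $f$-invariant probability. Theorem~\ref{mainSBO}(3) gives $\mu(\pi[\Sigma^\#])=1$, hence $\mu(\bigcup_{v\in\mathcal{V}} Z(v))=1$ and the coordinate $R(f^n(x))$ is defined for every $n\in\mathbb{Z}$ and $\mu$-a.e.\ $x$. Since $\mathcal{R}$ is countable and each partition member has finite $\mu$-measure, Poincar\'e recurrence applied component by component yields $n_k,m_k\uparrow\infty$ with $R(f^{n_k}(x))=R(x)$ and $R(f^{-m_k}(x))=R(f^{-m_0}(x))$ for $\mu$-a.e.\ $x$; thus the coding $\ul{R}(x):=(R(f^n(x)))_{n\in\mathbb{Z}}$ lies in $\Sig^\#$ with $\widehat{\pi}(\ul{R}(x))=x$, so $\mu(\widehat{\pi}[\Sig^\#])=1$. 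The main obstacle is the pigeonhole lift in (3): one must confirm that a recurrent $\ul{R}\in\Sig^\#$ can be covered by an admissible double-Pesin-chart sequence in $\Sigma^\#$ projecting to the same point, rather than merely by an arbitrary sequence in $\mathcal{V}^{\mathbb{Z}}$; this is exactly where the local finiteness of $\mathcal{Z}$ (Claim~\ref{localfinito}) and the bounded number of rectangles per $Z(v)$ (Definition~\ref{Doomsday}(2)(b)) must be combined, and verifying compatibility of this pigeonhole with the admissibility graph $\widehat{\mathcal{E}}$ is the most delicate point.
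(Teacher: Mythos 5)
The paper does not prove this theorem at all; it is cited verbatim from \cite[Theorem~1.1]{SBO} (and \cite[Theorem~1.3]{Sarig} for $d=2$). So there is no ``paper's own proof'' to compare against, but your proposal has a serious gap that would sink it.

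Your argument for item (3) rests on the claim that $\pi^{-1}(x)\cap\Sigma^\#$ is ``a finite set by Theorem~\ref{mainSBO}(3).'' This is a misreading: Theorem~\ref{mainSBO}(3) only asserts that $\pi[\Sigma^\#]$ \emph{carries} all $\chi$-hyperbolic invariant probabilities; it says nothing about the cardinality of fibers. In fact $\pi:\Sigma\to M$ is \emph{not} finite-to-one even on $\Sigma^\#$ --- the paper's own notation table labels $\Sigma$ the ``infinite-to-one Markov extension,'' and the entire reason $\Sig$ with the Bowen--Sinai refinement $\mathcal{R}$ is introduced in Definition~\ref{Doomsday} is precisely to manufacture a coding with finite fibers, which $\Sigma$ fails to have. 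Consequently the proposed factoring ``$\widehat{\pi}^{-1}(x)\cap\Sig^\#\hookrightarrow\pi^{-1}(x)\cap\Sigma^\#$, each $\ul{v}$ descends to finitely many $\ul{R}$'' has an infinite set in the middle, and the composition is vacuous.

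The second half of that factoring is also unjustified: fixing a cover $\ul{v}$, the constraint $R_i\subseteq Z(v_i)$ for all $i$ with $R_i\to R_{i+1}$, where each $Z(v_i)$ contains only finitely many $\mathcal{R}$-rectangles (Definition~\ref{Doomsday}(2)(b)), still permits a tree of bi-infinite choices which can be uncountable; one cannot conclude finiteness without already invoking that $\wpi(\ul{R})=x$ pins down $\ul{R}$ up to finitely many options, which is exactly the statement to be proved. The actual proof of (3) in \cite{SBO} proceeds by an entirely different mechanism: one shows that $\wpi(\ul{R})=\wpi(\ul{S})$ with $\ul{R},\ul{S}\in\Sig^\#$ forces $R_i$ and $S_i$ to be \emph{affiliated} for every $i\in\mathbb{Z}$ (the Bowen property of the refined partition), and then uses the recurrence in $\Sig^\#$ together with the local finiteness of the affiliation relation (Claim~\ref{localfinito} and the remark after it) to bound $\#\widehat{\pi}^{-1}(x)\cap\Sig^\#$ by a product of affiliation counts such as $N(R_0)\cdot N(S)$. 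This affiliation analysis is the genuinely nontrivial step, and there is no way to route around it through the infinite-to-one coding $\pi$. Items (1), (2), (4), (5) in your proposal are essentially correct and close to standard.
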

This theorem is the content of the main theorem of [BO18], Theorem 1.1 (and similarly the content of [Sar13, Theorem 1.3] when $d = 2$). Notice, the fibers of $\wpi$ being finite in $\Sig^\#$ does not imply being uniformly bounded.
\begin{prop}\label{cafeashter}
$\widehat{\pi}[\Sig^\#]=\pi[\Sigma^\#]=\bigcupdot\mathcal{R}=\HWT_\chi$.
\end{prop}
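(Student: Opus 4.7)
The plan is to establish the chain
$$\wpi[\Sig^\#]\;=\;\pi[\Sigma^\#]\;=\;\bigcupdot\mathcal{R}\;=\;\HWT_\chi$$
as three separate equalities. The first two are essentially formal consequences of the way $\mathcal{R}$ and $\Sig$ were built on top of $\Sigma$, while the third is the substantive statement and is inherited from the construction of the modified coding in \cite{LifeOfPi}.

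The equality $\pi[\Sigma^\#]=\bigcupdot\mathcal{R}$ is immediate from Definition~\ref{Doomsday}: $\mathcal{R}$ was defined as a partition of $\bigcup_{v\in\mathcal{V}}Z(v)$, and by Definition~\ref{Doomsday}(1) this union equals $\pi[\Sigma^\#]$. For $\wpi[\Sig^\#]=\pi[\Sigma^\#]$, I would argue both inclusions by a pigeonhole on finite collections. For ``$\subseteq$'', take $\ul{R}\in\Sig^\#$ and choose, for each $i$, some $v_i\in\mathcal{V}$ with $R_i\subseteq Z(v_i)$; bi-recurrence of the $R_i$'s, combined with Claim~\ref{localfinito} and Definition~\ref{Doomsday}(2)(b), forces the $v_i$'s to be recurrent along $\Sigma$-admissible transitions, so we can extract $\ul{u}\in\Sigma^\#$ with $\pi(\ul{u})=\wpi(\ul{R})$. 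For ``$\supseteq$'', take $x\in\pi[\Sigma^\#]$ coded by $\ul{u}\in\Sigma^\#$; the partition members $R(f^i(x))$ are each contained in $Z(u_i)$, and by Definition~\ref{Doomsday}(2)(b) there are only finitely many $R\in\mathcal{R}$ inside any given $Z(u)$, so a pigeonhole on the recurrent vertices of $\ul{u}$ produces a bi-recurrent sequence $\ul{R}\in\Sig^\#$ with $\wpi(\ul{R})=x$.

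The heart of the argument is $\pi[\Sigma^\#]=\HWT_\chi$. For the inclusion $\pi[\Sigma^\#]\subseteq\HWT_\chi$, one uses that each vertex of $\mathcal{G}$ is a double Pesin chart $\psi_{x}^{p^s,p^u}$, where the parameters $p^s,p^u$ are themselves bounded above by $Q_\epsilon(x)$ (Definition~\ref{PesinCharts}, Definition~\ref{littleQ}). Given $\ul{u}\in\Sigma^\#$ with $\pi(\ul{u})=x$, set $q(f^i(x)):=\min\{p^s_i,p^u_i\}$; the overlap/compatibility conditions for Pesin charts imposed by the edge relation force $q\circ f/q=e^{\pm\epsilon}$ and $q\leq Q_\epsilon\circ(\cdot)$, giving conditions (1) and (2) of Definition~\ref{temperable}, while bi-recurrence of $\ul{u}$ forces the value of $q$ to return to a fixed positive number infinitely often in both time directions, which is exactly condition (3). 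For the converse $\HWT_\chi\subseteq\pi[\Sigma^\#]$, given $x$ and a recurrently $\epsilon$-weakly tempered kernel $q$, one runs the shadowing/graph-construction of \cite{Sarig,SBO,LifeOfPi} along the orbit of $x$, using $q(f^i(x))$ to pick a vertex $u_i\in\mathcal{V}$ for each $i$; recurrence of $q$ in both time directions translates directly into bi-recurrence of the $u_i$'s through only finitely many vertices, giving $\ul{u}\in\Sigma^\#$ with $\pi(\ul{u})=x$.

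The main obstacle is this last direction — producing a bi-recurrently admissible sequence of vertices out of a recurrent kernel. The technical core is that one must check that the discrete alphabet $\mathcal{V}$ is designed (this is done in \cite{LifeOfPi}) so that the collection of vertices whose parameters lie above a fixed threshold $e^{-\ell\epsilon/3}$ is locally finite along any orbit; this is precisely what converts condition (3) of Definition~\ref{temperable} into bi-recurrence in $\Sigma^\#$. Since the full details of that step are carried out in \cite{LifeOfPi}, I would simply cite them here and focus the exposition on the two formal identifications of the first paragraph.
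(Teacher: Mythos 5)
The paper's ``proof'' of Proposition~\ref{cafeashter} is literally a one-line citation to \cite[Proposition~4.11, Corollary~4.12]{LifeOfPi}, so there is no in-text argument to compare your proposal against. Your reconstruction is a sensible sketch of what that reference must contain, and you allocate effort correctly: the equality $\pi[\Sigma^\#]=\bigcupdot\mathcal{R}$ really is a tautology of Definition~\ref{Doomsday}, the equality $\wpi[\Sig^\#]=\pi[\Sigma^\#]$ really does come down to pigeonhole via Claim~\ref{localfinito}, Definition~\ref{Doomsday}(2)(b), and the covering mechanism of Lemma~\ref{nextstepcover}, and $\pi[\Sigma^\#]=\HWT_\chi$ is the substantive step. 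One place where your sketch is slightly too optimistic: in the inclusion $\pi[\Sigma^\#]\subseteq\HWT_\chi$ you set $q(f^i(x)):=\min\{p^s_i,p^u_i\}$ and claim condition~(2) of Definition~\ref{temperable} follows. But the chart parameters satisfy $p^s_i,p^u_i\leq Q_\epsilon(x_i)$ where $x_i$ is the \emph{chart center}, while Definition~\ref{temperable}(2) demands $q(f^i(x))\leq Q_\epsilon(f^i(x))$; since $x_i\neq f^i(x)$ in general, you need (and \cite{LifeOfPi,SBO} supply) a comparability estimate between $Q_\epsilon(x_i)$ and $Q_\epsilon(f^i(x))$, after which one shrinks $q$ by a fixed small factor. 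This is precisely the sort of $\epsilon$-bookkeeping that the modified construction in \cite{LifeOfPi} is designed to handle, so your decision to cite it for the technical core is the right one — just be aware that the ``easy'' direction already relies on it.
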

This is the content of \cite[Proposition~4.11, Corollary~4.12]{LifeOfPi}.

\subsection{Maximal Dimension Unstable Leaves}
\begin{definition}
An {\em unstable leaf} (of $f$) in $M$, $V^u$, is a $C^{1+\frac{\beta}{3}}$ embedded, open,  Riemannian submanifold of $M$, such that $\forall x,y\in V^u$, $\limsup\limits_{n\rightarrow\infty} \frac{1}{n}\log d(f^{-n}(x),f^{-n}(y))<0$.
Similarly, a {\em stable leaf} is an unstable leaf of $f^{-1}$.
\end{definition}
\begin{definition}\label{maximaldimensionleaves}
An unstable leaf is called {\em an unstable leaf of maximal dimension}, if it is not contained in any unstable leaf of a greater dimension.
\end{definition}
Notice that if $x\in \HWT_\chi$ belongs to an unstable leaf of maximal dimension $V^u$, then $\mathrm{dim}H^u(x)=\mathrm{dim}V^u$. This can be seen using the following claim.
\begin{claim}\label{chikfila}
$\forall \ul{u}\in \Sigma$, there exists a maximal dimension unstable leaf $V^u(\ul{u})$, which depends only on $(u_i)_{i\leq0}$, and a maximal dimension stable leaf $V^s(\ul{u})$, which depends only on $(u_{i})_{i\geq0}$, s.t $\{\pi(\ul{u})\}=V^u(\ul{u})\cap V^s(\ul{u})$, and $f[V^s(\ul{u})]\subset V^s(\sigma\ul{u})$ and $f^{-1}[V^u(\ul{u})]\subset V^u(\sigma^{-1}\ul{u})$.	
\end{claim}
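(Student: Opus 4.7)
The plan is to build $V^u(\underline{u})$ and $V^s(\underline{u})$ by the standard admissible-manifold / graph-transform machinery in Pesin charts, then read off each of the three required properties from the construction.

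First I would recall that every $u\in \mathcal{V}$ is a double Pesin chart $\psi_x^{p^s,p^u}$, and that in the chart coordinates at $x=x(u_0)$ one has a well-defined notion of a $u$-admissible manifold: a $C^{1+\beta/3}$ graph $\{(F(t),t): |t|\leq p^u\}$ of a map $F$ from a ball in $\mathbb{R}^{u(x)}$ into $\mathbb{R}^{s(x)}$ with suitably small Lipschitz and H\"older norms. In these coordinates $\psi_{x_0}^{-1}\circ f^{-1}\circ \psi_{x_1}$ is a small perturbation of the hyperbolic linear map provided by Theorem \ref{pesinoseledec}, so the inverse graph transform $\mathcal{F}^u_{u_0\to u_1}$ sending a $u_1$-admissible manifold to its $f^{-1}$-preimage restricted to the chart of $u_0$ is a well-defined uniform contraction on the space of admissible graphs (with respect to the $C^0$ norm on $F$). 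For a fixed past $(u_i)_{i\leq 0}$, I would define $V^u_{\mathrm{loc}}(\underline{u})$ as the unique fixed point of the chain $\mathcal{F}^u_{u_{-1}\to u_0}\circ \mathcal{F}^u_{u_{-2}\to u_{-1}}\circ\cdots$, i.e.\ as the limit (independent of the choice of initial admissible manifold) of $f^{-n}$-preimages of $u_{-n}$-admissible manifolds. Symmetrically, define $V^s_{\mathrm{loc}}(\underline{u})$ using only $(u_i)_{i\geq 0}$ and the forward graph transform.

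Next I would globalize: set
\[
V^u(\underline{u}):=\bigcup_{n\geq 0} f^{n}\bigl[V^u_{\mathrm{loc}}(\sigma^{-n}\underline{u})\bigr], \qquad V^s(\underline{u}):=\bigcup_{n\geq 0} f^{-n}\bigl[V^s_{\mathrm{loc}}(\sigma^{n}\underline{u})\bigr].
\]
By construction each $V^u_{\mathrm{loc}}(\sigma^{-n}\underline{u})$ depends only on $(u_i)_{i\leq -n}$, and one iteration of $f$ embeds it into $V^u_{\mathrm{loc}}(\sigma^{-n+1}\underline{u})$ (this is precisely the equivariance built into the graph transform), so the union is a nested increasing family of $C^{1+\beta/3}$ embedded submanifolds of common dimension $u(x(u_0))$. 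The shift relations $f^{-1}[V^u(\underline{u})]\subset V^u(\sigma^{-1}\underline{u})$ and $f[V^s(\underline{u})]\subset V^s(\sigma\underline{u})$ then follow immediately by reindexing. Dependence of $V^u(\underline{u})$ only on $(u_i)_{i\leq 0}$ is clear since no $u_i$ with $i>0$ enters any of the graph transforms, and symmetrically for $V^s$. The contraction estimate inherited from the graph transform gives uniform exponential contraction of $d(f^{-n}(x),f^{-n}(y))$ for $x,y$ in the same $V^u_{\mathrm{loc}}(\sigma^{-n}\underline{u})$, which by the nested structure extends to all of $V^u(\underline{u})$; hence $V^u(\underline{u})$ is an unstable leaf in the sense of the definition just above, and symmetrically $V^s(\underline{u})$ is a stable leaf.

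To see that $\pi(\underline{u})$ is the unique intersection, note that $\pi(\underline{u})=\lim_{n\to\infty} f^n(\psi_{x(u_{-n})}(0))$ lies in $V^u_{\mathrm{loc}}(\underline{u})$ by the very definition of the inverse graph-transform limit, and by the analogous forward limit it lies in $V^s_{\mathrm{loc}}(\underline{u})$; uniqueness of the intersection follows from the transversality of stable and unstable graphs (their tangent spaces at the intersection are the $H^s$ and $H^u$ subspaces at that point, which are complementary by Theorem \ref{pesinoseledec}), together with the hyperbolicity: any other point in $V^u(\underline{u})\cap V^s(\underline{u})$ would be forward and backward contracted toward $\pi(\underline{u})$ and hence must equal it.

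The step I expect to be the main obstacle is the \emph{maximality of dimension} in the sense of Definition \ref{maximaldimensionleaves}. The dimension of $V^u(\underline{u})$ equals $u(x(u_0))$, which by construction is the dimension of the unstable subspace at $\pi(\underline{u})$; so what must be shown is that $V^u(\underline{u})$ cannot sit inside a strictly larger unstable leaf. The argument I would use is that any such enlargement would contain a vector at $\pi(\underline{u})$ transverse to $T_{\pi(\underline{u})}V^u(\underline{u})=H^u(\pi(\underline{u}))$, but the defining property of an unstable leaf forces every tangent vector to lie in $H^u$ (by the Oseledec/Pesin decomposition, directions outside $H^u$ fail to contract uniformly under $f^{-n}$ at rate faster than $e^{-\chi n}$, contradicting the $\limsup \tfrac1n\log d(f^{-n}\cdot,f^{-n}\cdot)<0$ condition applied along a curve in the larger leaf). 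This reduces maximality to the Oseledec splitting and the exponential contraction estimate obtained from the graph transform, and closes the proof.
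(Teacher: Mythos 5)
You have the right local machinery (admissible graphs in Pesin charts, the inverse graph transform as a contraction, and the fixed point of the infinite composition $\mathcal{F}^u_{u_{-1}\to u_0}\circ \mathcal{F}^u_{u_{-2}\to u_{-1}}\circ\cdots$), and the maximality argument via the Oseledec splitting is sound. But the globalization step
\[
V^u(\underline{u}):=\bigcup_{n\geq 0} f^{n}\bigl[V^u_{\mathrm{loc}}(\sigma^{-n}\underline{u})\bigr],\qquad V^s(\underline{u}):=\bigcup_{n\geq 0} f^{-n}\bigl[V^s_{\mathrm{loc}}(\sigma^{n}\underline{u})\bigr]
\]
is a mistake that actually falsifies one clause of the claim. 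With these global definitions, $V^u(\underline{u})\cap V^s(\underline{u})$ is the set of homoclinic (or in general bi-asymptotic) intersections of the global stable and unstable manifolds of $\pi(\underline{u})$, and this is not a singleton once any transverse homoclinic point exists --- which is precisely the situation this paper is built around (ergodic homoclinic classes). Your uniqueness argument (``any other point would be forward and backward contracted toward $\pi(\underline{u})$'') is the spot where this fails: a homoclinic point $q\neq \pi(\underline{u})$ in $W^u\cap W^s$ does satisfy $d(f^{\pm n}(q),f^{\pm n}(\pi(\underline{u})))\to 0$, yet $q\neq\pi(\underline{u})$. Bi-asymptotic is not enough to force equality unless both the forward and backward contractions happen inside a single chart, which is exactly what locality buys you.

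The paper is explicit right after the claim that $V^u(\underline{u}),V^s(\underline{u})$ are \emph{local}: ``By construction, $V^s(\underline{u}),V^u(\underline{u})$ are local, in the sense that they have finite (intrinsic) diameter.'' So the objects in the claim are your $V^u_{\mathrm{loc}}(\underline{u}),V^s_{\mathrm{loc}}(\underline{u})$ themselves --- do not take the union over iterates. For those, all three properties come out of the graph transform directly: dependence on one-sided coordinates is immediate from which charts enter the composition; the equivariance of the graph transform gives $f^{-1}[V^u_{\mathrm{loc}}(\underline{u})]\subset V^u_{\mathrm{loc}}(\sigma^{-1}\underline{u})$ (proper inclusion, since $f^{-1}$ contracts the admissible graph into the interior of the next chart's window); and the singleton intersection is the chart-level transversality of a small $C^{1}$ admissible $s$-graph with a small admissible $u$-graph, each with controlled slope, which meet in exactly one point (the Smale bracket). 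The global manifolds of Definition~\ref{globalstableunstable} are a separate construction obtained precisely by the union you wrote down, and they are used elsewhere in the paper for the homoclinic-class definition, but they are not what Claim~\ref{chikfila} is about. (The paper itself does not prove the claim inline; it is imported from the references, which also work with the local admissible manifolds.)
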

This is the content of \cite[Proposition~3.12,Theorem~3.13, Proposition~4.4]{SBO} (and similarly \cite[Proposition~4.15,Theorem~4.16,Proposition~6.3]{Sarig} when $d=2$). By construction, $V^s(\ul{u}),V^u(\ul{u})$ are local, in the sense that they have finite (intrinsic) diameter.

\subsection{Ergodic Homoclinic Classes and Maximal Irreducible Components}\label{mycorona}
\begin{definition}\label{globalstableunstable}
	Let $x\in \HWT_\chi$, and let $\ul{u}\in\Sigma^\#$ s.t $
	\pi(\ul{u})=x$. The {\em global stable (resp. unstable)} manifold of $x$ is $W^s(x):=\bigcup_{n\geq0}f^{-n}[V^s(\sigma^n\ul{u})]$ { \em(resp. }$W^u(x):=\bigcup_{n\geq0}f^{n}[V^u(\sigma^{-n}\ul{u})]$ {\em )}.
\end{definition}
This definition is proper and is independent of the choice of $\ul{u}$, by the construction of $V^u(\underline{u})$. For more details see \cite[Definition~2.23, Definition~3.2]{SBO}. Recall the remark after Definition    \ref{temperable}.

Let $p$ be a periodic point in $\chi\text{-}\mathrm{summ}$, i.e hyperbolic periodic point. Since $p$ is periodic, $\|C^{-1}_\chi(\cdot)\|$ is bounded along the orbit of $p$, and therefore $p\in \HWT_\chi$. 

\begin{definition}
	Let $\Sigma'$ be a topological Markov shift. A {\em cylinder} $[R_0,...,R_n]$, $n\geq0$, is the set $\{\ul{R}'\in \Sigma': R_i'=R_i \forall i=0,...,n\}$. An element of a topological Markov shift is called a {\em chain}.
\end{definition}

\begin{definition}\label{irreducibility}
Consider the Markov partition $\mathcal{R}$ from Definition \ref{Doomsday}.
\begin{enumerate}
	\item For any $a,b\in\mathcal{R}$, $n\in\mathbb{N}$, we write $a\xrightarrow[]{n}b$ if there exists a non-empty cylinder $[W_1,...,W_{n}]$ s.t $W_1=a$ and $W_n=b$.   
    \item Define $\sim\subseteq\mathcal{R}\times\mathcal{R}$ by $R\sim S\iff \exists n_{RS},n_{SR}\in\mathbb{N}\text{ s.t } R\xrightarrow[]{n_{RS}}S,S\xrightarrow[]{n_{SR}}R$
    . The relation $\sim$ is transitive and symmetric. When restricted to $\{R\in \mathcal{R}:R\sim R\}$, it is also reflexive, and thus an equivalence relation. Denote the corresponding equivalence class of some representative $R\in\mathcal{R}$, $R\sim R$, by $\langle R\rangle$.
    \item A {\em maximal irreducible component} in $\Sig$, corresponding to $R\in\mathcal{R}$ s.t $R\sim R$, is $\{\ul{R}\in\Sig: \ul{R}\in\langle R\rangle^\mathbb{Z}\}$.
    \item If a topological Markov shift is a maximal irreducible component of itself, we call it {\em irredcible}.
\end{enumerate}
\end{definition}

If $\wt{\Sigma}$ is a maximal irreducible component of $\Sig$, then $\wh{\pi}[\wt{\Sigma}^\#]$ is a subset of an ergodic homoclinic class, where $\wt{\Sigma}^\#:= \Sig^\#\cap\wt{\Sigma}$.

Recall Definition \ref{homoclinicclass}.

\begin{claim}\label{tobewritten}
	Let $\mu$ be an $f$-invariant ergodic probability measure which is carried by $\HWT_\chi$. Then there exists a unique ergodic homoclinic class $H_\chi(p)$, where $p$ is a $\chi$-hyperbolic periodic point, which carries $\mu$.
\end{claim}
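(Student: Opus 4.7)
The plan is to lift $\mu$ through the symbolic coding, locate an irreducible component which carries the lifted measure, and then use chains within that component to build the homoclinic intersections.

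First I would use Proposition~\ref{cafeashter}, which gives $\HWT_\chi = \wh{\pi}[\Sig^\#]$, together with the fact that $\wh{\pi}|_{\Sig^\#}$ is finite-to-one. A standard construction produces a $\sigma$-invariant probability measure $\wh{\mu}$ on $\Sig$ with $\wh{\pi}_*\wh{\mu} = \mu$ and $\wh{\mu}(\Sig^\#) = 1$; one may also arrange $\wh{\mu}$ to be ergodic (by taking an ergodic component, which projects to an $f$-invariant measure absolutely continuous w.r.t. $\mu$ and hence equal to $\mu$ by ergodicity of $\mu$). By ergodicity, $\wh{\mu}$ is concentrated on a single maximal irreducible component $\wt{\Sigma}$ in the sense of Definition~\ref{irreducibility}: indeed, pick any $R \in \mathcal{R}$ with $\wh{\mu}([R]) > 0$; then by the Poincar\'e recurrence theorem applied to $\wh{\mu}$, $\wh{\mu}$-a.e.\ chain returns to $[R]$ infinitely often in both time directions, so $\wh{\mu}$-a.e.\ chain lies entirely in the equivalence class $\langle R\rangle^{\mathbb{Z}}$.

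Next I would produce the periodic point $p$. Since $R \sim R$, there is a finite cycle $R \to R_1 \to \cdots \to R_{n-1} \to R$ in the graph of $\wh{\mathcal{E}}$, which gives a periodic chain $\ul{P} \in \wt{\Sigma}$ with $\sigma^n \ul{P} = \ul{P}$. Setting $p := \wh{\pi}(\ul{P})$, the relation $f \circ \wh{\pi} = \wh{\pi} \circ \sigma$ yields $f^n(p) = p$. Because $\ul{P} \in \Sig^\#$, we have $p \in \HWT_\chi \subseteq \chi$-$\mathrm{summ}$, so $p$ is a $\chi$-hyperbolic periodic point.

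The core step is to show $\mu(H_\chi(p)) = 1$. For $\wh{\mu}$-a.e.\ $\ul{R} \in \wt{\Sigma}^\#$ with $x = \wh{\pi}(\ul{R}) \in \HWT_\chi$, by irreducibility of $\wt{\Sigma}$ there are admissible paths joining the symbolic orbit of $\ul{R}$ to $\ul{P}$ in both directions: $R_{n_0} \to R'_1 \to \cdots \to R'_k \to P_0$ and $P_0 \to R''_1 \to \cdots \to R''_\ell \to R_{-m_0}$ for suitable indices $n_0, m_0$ (coming from the $\Sig^\#$ recurrence of $\ul{R}$). Concatenating these with tails of $\ul{R}$ and bi-infinite repetitions of $\ul{P}$ yields chains $\ul{R}^+, \ul{R}^- \in \Sig$ such that $\wh{\pi}(\ul{R}^+) \in W^s(f^k(p)) \cap W^u(x)$ and $\wh{\pi}(\ul{R}^-) \in W^u(f^{-\ell}(p)) \cap W^s(x)$ by Claim~\ref{chikfila} and Definition~\ref{globalstableunstable}. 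The fact that each such intersection is a transverse intersection of full codimension comes from the rectangle property in Definition~\ref{Doomsday}(2)(c): inside a single $R \in \mathcal{R}$ the Smale bracket $[\cdot,\cdot]_R$ realizes the full-codimension transverse intersection of a maximal unstable leaf with a maximal stable leaf. Hence $x \in H_\chi(p)$ for $\mu$-a.e.\ $x$.

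Finally, uniqueness is immediate from the dichotomy stated after Definition~\ref{homoclinicclass}: any two ergodic homoclinic classes are either $\mu$-null intersection under all conservative measures or coincide; since $\mu$ is an $f$-invariant probability it is conservative, so two classes both carrying $\mu$ must coincide. The main obstacle is the transversality and full-codimension claim for the homoclinic intersections constructed symbolically; this is exactly what the Markov/rectangle structure of $\mathcal{R}$ is designed to give, but it must be checked carefully by tracing how the local stable/unstable leaves $V^s(\cdot), V^u(\cdot)$ sit inside the rectangles and using $\mathrm{dim}\, V^s + \mathrm{dim}\, V^u = d$ which holds on $\HWT_\chi$ by the uniform splitting assumption.
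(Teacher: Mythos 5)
The paper does not actually supply a proof of Claim~\ref{tobewritten}; it only cites \cite{RodriguezHertz}, where the argument is carried out directly in Pesin theory (using closing lemmas, recurrence to Pesin blocks, and the invariance of the stable/unstable foliations). Your proposal instead routes the claim through the symbolic coding that the paper has already built: lift $\mu$ to an ergodic $\wh\mu$ on $\Sig^\#$, use two-sided Poincar\'e recurrence to show that $\wh\mu$ lives in a single maximal irreducible component $\wt\Sigma$, produce $p$ by closing a loop in $\wt\Sigma$, and realize the homoclinic intersections $W^u(x)\cap W^s(o(p))$ and $W^s(x)\cap W^u(o(p))$ by concatenating chains. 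This is a legitimate and structurally coherent alternative: it is the inverse direction of Proposition~\ref{homoclinicirreducible} (which goes from $H_\chi(p)$ to an irreducible component), and it fits naturally with the machinery the paper already develops. Both approaches buy the same thing, but yours is arguably more economical in this paper's context since the symbolic apparatus is already in place, whereas the cited reference requires an independent Pesin-theoretic development.

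One step deserves a sharper justification. You attribute the full-codimension transversality of the intersections to the rectangle property of Definition~\ref{Doomsday}(2)(c). The rectangle property gives you the \emph{existence and uniqueness} of the Smale bracket inside a rectangle, but by itself it does not control angles. The clean argument is that the concatenated chain $\ul R^+$ (past from $\ul R$, eventually periodic future from $\ul P$) lies in $\Sig^\#$, so $z:=\wh\pi(\ul R^+)\in\HWT_\chi\subseteq\chi\text{-}\mathrm{summ}$, hence $T_zM=H^s(z)\oplus H^u(z)$; combined with $T_zW^u(x)=H^u(z)$ and $T_zW^s(o(p))=H^s(z)$ (which hold because $z\in W^u(x)$ forces $W^u(z)=W^u(x)$, and likewise on the stable side, with tangencies as in \cite[Proposition~4.4]{SBO}), one gets $T_zW^u(x)\oplus T_zW^s(o(p))=T_zM$. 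The dimension count $\dim W^u(x)+\dim W^s(o(p))=d$ uses the running assumption made after Definition~\ref{NUHsharp} that $s(\cdot)$ is constant on $\HWT_\chi$, which you correctly invoke. With that repair the proposal is sound; the other steps — ergodic lifting, Poincar\'e-recurrence localization to $\langle R\rangle^{\mathbb Z}$, closing a periodic cycle, and the uniqueness dichotomy for conservative measures — all check out.
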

For proof, see \cite{RodriguezHertz}.

\begin{prop}\label{homoclinicirreducible}
Let $p$ be a periodic $\chi$-hyperbolic point. Then, there exists a maximal irreducible component, $\widetilde{\Sigma}\subseteq\Sig$, s.t $\widehat{\pi}[\widetilde{\Sigma}^\#]\supseteq  H_\chi(p)\cap \wpi[\Sig^{\#\#}]$, where 
$$\Sig^{\#\#}=\{\ul{R}\in\Sig:\exists R\text{ s.t }\#\{i\geq0:R_i=R\}= \#\{i\leq0:R_i=R\}=\infty\}.$$ In particular, $\widehat{\pi}[\widetilde{\Sigma}^\#]=H_\chi(p)$ modulo all conservative measures.
\end{prop}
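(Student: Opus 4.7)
My plan is to construct $\wt{\Sigma}$ as the maximal irreducible component that contains a periodic symbolic orbit of $p$, and then to show that the heteroclinic intersections built into the definition of $H_\chi(p)$ force every $\Sig^{\#\#}$-coding of a point in $H_\chi(p)$ to lie in $\wt{\Sigma}^\#$. Since $p$ is periodic and $\chi$-hyperbolic, $\|C_\chi^{-1}(f^n(p))\|$ is bounded uniformly in $n$, so $p\in\HWT_\chi$. By Proposition~\ref{cafeashter} there exists $\ul{R}^{(p)}\in\Sig^\#$ with $\wh{\pi}(\ul{R}^{(p)})=p$; since $\wh{\pi}|_{\Sig^\#}$ is finite-to-one and $\sigma$ acts on the fiber over $p$, I can arrange $\ul{R}^{(p)}$ to be periodic. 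Setting $R_p:=R^{(p)}_0$, one has $R_p\sim R_p$, and I define $\wt{\Sigma}:=\{\ul{R}\in\Sig:R_i\in\langle R_p\rangle,\ \forall i\in\mathbb{Z}\}$.

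\textbf{Main inclusion.} Let $x\in H_\chi(p)\cap\wpi[\Sig^{\#\#}]$ with coding $\ul{R}^{(x)}\in\Sig^{\#\#}$ and recurrence rectangle $S$. I would show $S\sim R_p$, which forces every $R^{(x)}_i$ into $\langle R_p\rangle$ (any index lies between two visits to $S$, so $S\xrightarrow[]{\cdot}R^{(x)}_i\xrightarrow[]{\cdot}S$ and hence $R^{(x)}_i\sim S$), yielding $\ul{R}^{(x)}\in\wt{\Sigma}^\#$. To produce a path $S\xrightarrow{n_0} R_p$: pick $y\in W^u(x)\cap W^s(f^k(p))$ from $W^u(x)\pitchfork W^s(o(p))\neq\varnothing$. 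Being backward-asymptotic to $x$ and forward-asymptotic to $o(p)$, the point $y$ inherits weak temperability from both ends, so $y\in\HWT_\chi$ and admits a coding $\ul{R}^{(y)}\in\Sig^\#$. Using the H\"older continuity of $\wh{\pi}$ together with the local structure of the Markov partition: for sufficiently large $n>0$, $f^n(y)$ is so close to $f^{n+k}(p)$ that $\ul{R}^{(y)}$ agrees on the positive tail with a shift of $\ul{R}^{(p)}$, so $R^{(y)}_n\in\langle R_p\rangle$; for sufficiently large $m>0$, $f^{-m}(y)$ shadows $f^{-m}(x)$ closely enough that $R^{(y)}_{-m}=S$ at one of the infinitely many negative times when $R^{(x)}_{-m}=S$. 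Reading the finite path in $\ul{R}^{(y)}$ between such times yields $S\xrightarrow{n_0}R_p$. Symmetrically, $z\in W^s(x)\cap W^u(f^\ell(p))$ gives $R_p\xrightarrow{n_1}S$, so $S\sim R_p$.

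\textbf{The second sentence of the proposition.} I would prove the two inclusions separately. The inclusion $\wh{\pi}[\wt{\Sigma}^\#]\subseteq H_\chi(p)$ is pointwise: any $\ul{R}\in\wt{\Sigma}^\#$ can be spliced with $\ul{R}^{(p)}$ through admissible paths within $\langle R_p\rangle$, producing chains in $\wt{\Sigma}^\#$ whose projections realize transverse heteroclinic intersections between $\wh{\pi}(\ul{R})$ and $o(p)$ in both directions, so $\wh{\pi}(\ul{R})\in H_\chi(p)$. For the converse, let $\mu$ be a conservative $f$-invariant probability measure carried by $H_\chi(p)$; it lifts to a $\sigma$-invariant probability on $\Sig^\#$ using finite-to-one coding, and Poincar\'e recurrence applied to a cylinder of positive mass gives almost every chain some rectangle that is revisited infinitely often both in the future and in the past, i.e., the chain lies in $\Sig^{\#\#}$. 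Combining with the main inclusion yields $\wh{\pi}[\wt{\Sigma}^\#]\supseteq H_\chi(p)$ modulo $\mu$.

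\textbf{Main obstacle.} I expect the technical bottleneck to lie in the step identifying, for the heteroclinic point $y$, a coding $\ul{R}^{(y)}$ that visits the specific rectangle $S$ in the past, rather than merely some rectangle affiliated to $S$. Different codings of nearby points may land in distinct but affiliated rectangles, so the argument must combine the H\"older regularity of $\wh{\pi}$, the local finiteness of the cover $\mathcal{Z}$ (Claim~\ref{localfinito}), and the existence of infinitely many recurrence times to $S$ itself along $\ul{R}^{(x)}$, to promote ``admissibility between affiliates'' to the actual graph admissibility $S\to\cdot$ needed for $S\sim R_p$. This is precisely where the $\#\#$-condition, rather than only $\#$, is used.
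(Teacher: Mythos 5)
Your plan---take $\wt\Sigma$ to be the irreducible component of a periodic symbol $R_p$ coding $p$, and reduce to showing $S\sim R_p$ where $S$ is the return symbol of a $\Sig^{\#\#}$-coding of $x$---is the right skeleton, and you have correctly located the crux. But the crux is not closed: what you flag as a ``technical bottleneck'' to be dealt with by ``combining'' several facts is in fact the entire mathematical content of the proposition (the paper itself gives no proof, and refers to \cite[Theorem~5.9]{LifeOfPi}, which in turn rests on \cite{BCS}). Concretely, the assertion that for large $m$ one has $R^{(y)}_{-m}=S$ does not follow from the H\"older continuity of $\wpi$: that continuity constrains $\wpi(\ul R)$ as a function of the chain $\ul R$, not the symbol $R(z)$ as a function of the point $z$, and $z\mapsto R(z)$ is genuinely discontinuous (the rectangles have boundary). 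Proximity of $f^{-m}(y)$ to $f^{-m}(x)$ along $W^u$ can only place $f^{-m}(y)$ in a rectangle \emph{affiliated} to $S$, and there need be no edge of $\wh{\mathcal{E}}$ between affiliated rectangles; Claim~\ref{localfinito} bounds the number of affiliates but does not create such an edge. The symmetric failure occurs on the positive tail when you try to match $\ul R^{(y)}$ to a shift of $\ul R^{(p)}$.

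Closing the gap cannot be done by pinning down a coding of the heteroclinic intersection point $y$. The mechanism behind the cited result routes through periodic orbits instead: one first proves that two $\chi$-hyperbolic periodic orbits whose invariant manifolds meet transversally have codings in a common maximal irreducible component (this is the substantive input), and then approximates $x$ by periodic points $q_k=\wpi(\ul Q^{(k)})$, where $\ul Q^{(k)}$ is obtained by periodizing $\ul R^{(x)}$ along consecutive return times to $S$; by construction $\ul Q^{(k)}\in\langle S\rangle^{\mathbb{Z}}$, and by $C^1$-persistence of transverse intersections $o(q_k)$ is homoclinically related to $o(p)$ for $k$ large, so the periodic-orbit case forces $\langle S\rangle=\langle R_p\rangle$. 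Your argument needs this (or an equivalent) substitute for the step ``$R^{(y)}_{-m}=S$.'' Two further steps are asserted without proof and are not trivial: that the heteroclinic point $y$ lies in $\HWT_\chi$, and the ``modulo conservative measures'' claim via lifting a conservative invariant measure over the finite-to-one map $\wpi$ and applying Poincar\'e recurrence to land in $\Sig^{\#\#}$. Both are the right ideas, but as written the proposal is a plan rather than a proof.
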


This is the content of \cite[Theorem~5.9]{LifeOfPi}, and is based on the result of Buzzi, Crovisier and Sarig in \cite{BCS} for homoclinic classes of the type of Newhouse \cite{NewhousePeriodicEquivalenceRelation}, and the $f$-invariant, $\chi$-hyperbolic, probability measures which they carry.

\medskip

\subsection{The Canonical Part of The Symbolic Space}\text{ }
\begin{definition}\label{littlerightshift}
\begin{align*}\Sig_L:=&\{(R_i)_{i\leq0}\in \mathcal{R}^{-\mathbb{N}}:\forall i\leq 0, R_{i-1}\rightarrow R_i\},\text{ }\sigma_R:\Sig_L\rightarrow\Sig_L,\sigma_R((R_i)_{i\leq0})=(R_{i-1})_{i\leq0}.
\end{align*}
\end{definition}
Notice, $\sigma_R$ is the right-shift, not the left-shift. In order to prevent any confusion, we will always notate $\sigma_R$ with a subscript $R$ (for ``right"), when considering the right-shift.

\begin{definition}[{\em The canonical coding} $\ul{R}(\cdot)$]\label{canonico}
Recall that $R(x)$ is the unique $\mathcal{R}$-element which contains $x$. Given $$x\in \pi[\Sigma^\#]=\bigcupdot\mathcal{R}\text{, let }(\ul{R}(x))_i:=R(f^i(x)), i\in\mathbb{Z}.$$
\end{definition}
One should notice that $\widehat{\pi}(\ul{R}(x))=x,\ul{R}(x)\in \Sig^\circ$ (see Definition \ref{canonicparts} for $\Sig^\circ$).

\begin{definition}\label{chikfilb}\label{unstabledisconnected} $$\forall \ul{R}\in\Sig_L,\text{ }W^u(\ul{R}):=\bigcap_{j=0}^\infty f^j[R_{-j}].$$
\end{definition}

$W^u(\ul{R})$ might be empty, and is not expected to be an immersed submanifold. It is a subset of $\HWT_\chi$. We have the following important property.
\begin{cor}\label{decomposition} $\forall \ul{R}\in\Sig_L$, $$f[W^u(\ul{R})]=\bigcupdot_{\sigma_R\ul{S}=\ul{R}}W^u(\ul{S}).$$
\end{cor}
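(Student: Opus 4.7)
\medskip

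\textbf{Proof plan.} The strategy is a direct index manipulation combined with the partition property of $\mathcal{R}$, using that $\HWT_\chi = \bigcupdot \mathcal{R}$ is $f$-invariant (Proposition \ref{cafeashter}).

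First, I would unpack the preimages of $\ul{R}$ under $\sigma_R$. If $\ul{S} = (S_i)_{i \leq 0}$ satisfies $\sigma_R \ul{S} = \ul{R}$, then $S_{i-1} = R_i$ for every $i \leq 0$, i.e.\ $S_i = R_{i+1}$ for all $i \leq -1$, while $S_0$ is unconstrained except that admissibility of $\ul{S}$ forces $S_{-1} \to S_0$, that is, $R_0 \to S_0$. Thus preimages are in bijection with the finite (by local-finiteness of $\mathcal{G}$) set $\{S_0 \in \mathcal{R} : R_0 \to S_0\}$.

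Next, I would compute both sides and identify them rectangle by rectangle. On one hand,
\[
f[W^u(\ul{R})] = \bigcap_{j=0}^{\infty} f^{j+1}[R_{-j}] = \bigcap_{k=1}^{\infty} f^{k}[R_{-k+1}].
\]
On the other hand, for any preimage $\ul{S}$ as above,
\[
W^u(\ul{S}) = \bigcap_{j=0}^{\infty} f^{j}[S_{-j}] = S_0 \cap \bigcap_{j=1}^{\infty} f^{j}[R_{-j+1}] = S_0 \cap f[W^u(\ul{R})],
\]
after substituting $S_{-j} = R_{-j+1}$ for $j \geq 1$ and re-indexing. This immediately gives the inclusion $\bigcup_{\sigma_R \ul{S} = \ul{R}} W^u(\ul{S}) \subseteq f[W^u(\ul{R})]$, and shows that the pieces on the left are pairwise disjoint, since distinct admissible choices of $S_0$ lie in distinct members of the partition $\mathcal{R}$; this justifies the $\bigcupdot$ symbol.

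For the reverse inclusion, take $x \in f[W^u(\ul{R})]$ and write $x = f(y)$ with $y \in W^u(\ul{R}) \subseteq \HWT_\chi$. Since $\HWT_\chi$ is $f$-invariant and equals $\bigcupdot \mathcal{R}$ (Proposition \ref{cafeashter}), there is a unique $S_0 \in \mathcal{R}$ with $x \in S_0$. Because $y \in R_0$ and $f(y) \in S_0$, the intersection $R_0 \cap f^{-1}[S_0]$ is non-empty, so by Definition \ref{Doomsday}(3) we have $R_0 \to S_0$, and hence $\ul{S} := (\dots, R_{-1}, R_0, S_0)$ is an admissible preimage of $\ul{R}$ with $x \in W^u(\ul{S}) = S_0 \cap f[W^u(\ul{R})]$. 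The main (and only) subtlety is to invoke the $f$-invariance of $\HWT_\chi$ to place $x$ inside some $S_0 \in \mathcal{R}$; everything else is bookkeeping.
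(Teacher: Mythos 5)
Your argument is correct and is the natural one: the paper itself only cites \cite[Corollary~3.19]{SRBleaf} for this fact, and the argument there is the same index bookkeeping you give. The only step that is more than formal is exactly the one you flag — using $W^u(\ul{R})\subseteq R_0\subseteq \HWT_\chi=\bigcupdot\mathcal{R}$ (stated after Definition \ref{unstabledisconnected} and in Proposition \ref{cafeashter}) together with the $f$-invariance of $\HWT_\chi$ to place $f(y)$ in a unique $S_0\in\mathcal{R}$, and then Definition \ref{Doomsday}(3) to conclude $R_0\to S_0$; disjointness follows from $\mathcal{R}$ being a partition, as you note.
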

See \cite[Corollary~3.19]{SRBleaf} for proof.

\begin{definition}\label{canonicparts}
$$\Sig^\circ:=\{\ul{R}\in\Sig:\forall n\in\mathbb{Z}, f^n(\widehat{\pi}(\ul{R}))\in R_n\},$$
$$\Sig^\circ_L:=\{\ul{R}\in\Sig^\#_L:W^u(\ul{R})\neq\varnothing\},$$
where $\Sig^\#_L:=\{(R_i)_{i\leq 0}:(R_i)_{i\in\mathbb{Z}}\in \Sig^\#\}$.
We call $\Sig^\circ,\Sig^\circ_L$ {\em the canonical parts of the respective symbolic spaces}.
\end{definition}
Notice that $\ul{R}(\cdot)$ is the inverse of $\widehat{\pi}|_{\Sig^\circ}$; and that $\Sig^\circ_L=(\Sig^\circ)_L:=\{(R_{i})_{i\leq0}:(R_i)_{\in\mathbb{Z}}\in\Sig^\circ\}$.

\begin{claim}\label{imagecanonic}
$$\widehat{\pi}[\Sig^\circ]=\widehat{\pi}[\Sig^\#]=\pi[\Sigma^\#]=\bigcupdot\mathcal{R}.$$
\end{claim}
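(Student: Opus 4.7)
The plan is to reduce the four-way equality to a single new inclusion. The last two equalities, $\widehat{\pi}[\Sig^\#]=\pi[\Sigma^\#]=\bigcupdot\mathcal{R}$, are already stated in Proposition \ref{cafeashter}, so the only genuinely new content is $\widehat{\pi}[\Sig^\circ]=\bigcupdot\mathcal{R}$, which I would prove by two inclusions.

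For $\widehat{\pi}[\Sig^\circ]\subseteq\bigcupdot\mathcal{R}$: take any $\ul{R}\in\Sig^\circ$. By the definition of $\Sig^\circ$ (Definition \ref{canonicparts}), applied at $n=0$, we get $\widehat{\pi}(\ul{R})\in R_0\in\mathcal{R}$, hence $\widehat{\pi}(\ul{R})\in\bigcupdot\mathcal{R}$. This inclusion is essentially tautological.

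For the reverse inclusion $\bigcupdot\mathcal{R}\subseteq\widehat{\pi}[\Sig^\circ]$: given $x\in\bigcupdot\mathcal{R}$, form the canonical coding $\ul{R}(x)\in\mathcal{R}^{\mathbb{Z}}$ of Definition \ref{canonico} by $(\ul{R}(x))_n:=R(f^n(x))$. Admissibility as a chain in $\Sig$ is immediate from the arrow relation of Definition \ref{Doomsday}(3): since $f^n(x)\in R(f^n(x))\cap f^{-1}[R(f^{n+1}(x))]$, this intersection is non-empty and so $R(f^n(x))\to R(f^{n+1}(x))$. By construction $f^n(x)\in(\ul{R}(x))_n$ for every $n$, so once the identity $\widehat{\pi}(\ul{R}(x))=x$ is in hand, both $\ul{R}(x)\in\Sig^\circ$ and $x\in\widehat{\pi}[\Sig^\circ]$ follow at once.

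The last identification $\widehat{\pi}(\ul{R}(x))=x$ is where the real work sits, and where I would invoke the construction of $\widehat{\pi}$ from \cite{SBO} (and \cite{Sarig} for $d=2$). By Proposition \ref{cafeashter}, $x=\pi(\ul{u})$ for some $\ul{u}\in\Sigma^\#$. The induced factor map on the refinement $\mathcal{R}$ of the cover $\mathcal{Z}$ is built precisely so that $\widehat{\pi}$ applied to the $\mathcal{R}$-itinerary of an orbit returns the base point, i.e.\ the tautological map $\ul{u}\mapsto(R(f^n(\pi(\ul{u}))))_{n\in\mathbb{Z}}$ is a right-inverse of $\widehat{\pi}$ on its image. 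Specializing to our $\ul{u}$ with $\pi(\ul{u})=x$, the resulting chain is exactly $\ul{R}(x)$, and so $\widehat{\pi}(\ul{R}(x))=\pi(\ul{u})=x$. The main (in fact, only) obstacle is this last step, because the fibers of $\widehat{\pi}$ over points of $\bigcupdot\mathcal{R}$ can consist of more than one chain, so one cannot invoke any abstract uniqueness; one must appeal directly to the explicit compatibility between $\pi$ and $\widehat{\pi}$ established in the references. Combining the two inclusions with Proposition \ref{cafeashter} yields the full chain of equalities.
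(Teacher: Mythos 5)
Your proof is correct; the paper itself gives no proof of this claim, deferring to \cite[Corollary~3.21]{SRBleaf}, and your argument is the natural one that reference follows, namely reducing via Proposition~\ref{cafeashter} to the single new equality $\widehat{\pi}[\Sig^\circ]=\bigcupdot\mathcal{R}$ and handling the nontrivial inclusion through the canonical coding. The one step you rightly flag as needing care, $\widehat{\pi}(\ul{R}(x))=x$, is also what the remark after Definition~\ref{canonico} records, and it holds for the reason you give: by the shadowing construction in \cite{SBO,Sarig}, $\widehat{\pi}(\ul{R})$ is the unique point of $\bigcap_{n}f^{-n}\overline{R_n}$, and $x$ lies in that intersection by the very definition of $\ul{R}(x)$.
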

See \cite[Corollary~3.21]{SRBleaf} for proof.

\begin{lemma}\label{nextstepcover}
If $R_0\rightarrow R_1$, where $R_0,R_1\in\mathcal{R}$, and $R_0\subseteq Z(u_0)$, $u_0\in\mathcal{V}$ then $\exists u_1\in\mathcal{V}$ s.t $u_0\rightarrow u_1$ and $R_1\subseteq Z(u_1)$. Analogously, if $R_0\rightarrow R_1$ and $R_1\subseteq Z(v_1)$, then $\exists v_0$ s.t $v_0\rightarrow v_1$ and $R_0\subseteq Z(v_0)$.
\end{lemma}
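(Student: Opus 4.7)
\textbf{Proof plan for Lemma \ref{nextstepcover}.} The plan is to move the edge relation $R_0\to R_1$ in the graph of $\mathcal{R}$ up to the graph $\mathcal{G}$ of $\mathcal{V}$ by lifting a witness point through the factor map $\pi$ of Theorem \ref{mainSBO}, and then push the containment of a single point back down through the refinement property to obtain full containment of the rectangle.

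First I would pick a witness point. Since $R_0\to R_1$, by Definition \ref{Doomsday}(3) we have $R_0\cap f^{-1}[R_1]\neq\varnothing$, and by Proposition \ref{cafeashter} every point of $R_0$ lies in $\pi[\Sigma^\#]$. Choose $x\in R_0\cap f^{-1}[R_1]$; then $x\in R_0\subseteq Z(u_0)$, so by the definition $Z(u_0)=\pi[\{\underline{u}\in\Sigma^\#:u_0=u_0\}]$ there exists $\underline{u}\in\Sigma^\#$ with $\pi(\underline{u})=x$ and zeroth coordinate equal to $u_0$. Set $u_1:=u_1$-th coordinate of $\underline{u}$, which is automatically in $\mathcal{V}$. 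Admissibility of $\underline{u}$ gives $(u_0,u_1)\in\mathcal{E}$, i.e. $u_0\to u_1$ in $\mathcal{G}$.

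Next I would push down. Applying the equivariance $\pi\circ\sigma=f\circ\pi$ from Theorem \ref{mainSBO}(1) yields $f(x)=\pi(\sigma\underline{u})$, and since $\sigma\underline{u}\in\Sigma^\#$ has zeroth coordinate $u_1$, we obtain $f(x)\in Z(u_1)$. On the other hand $f(x)\in R_1$ by our choice of $x$. Thus $R_1\cap Z(u_1)\neq\varnothing$, and by the refining property of $\mathcal{R}$ over $\mathcal{Z}$ stated in Definition \ref{Doomsday}(2)(a) this forces $R_1\subseteq Z(u_1)$. This proves the first assertion.

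For the analogous backward statement, I would run the same argument but starting from a witness in $R_1$: pick $y\in R_1\cap f[R_0]$, lift it via the defining property of $Z(v_1)$ to a chain $\underline{v}\in\Sigma^\#$ whose zeroth coordinate is $v_1$, set $v_0:=v_{-1}$-th coordinate of $\underline{v}$, note $(v_0,v_1)\in\mathcal{E}$ by admissibility, and use $f^{-1}(y)=\pi(\sigma^{-1}\underline{v})\in Z(v_0)\cap R_0$ to conclude $R_0\subseteq Z(v_0)$ by the same refinement argument. The whole lemma is essentially bookkeeping around the definition of $Z(u)$ and the refinement in Definition \ref{Doomsday}(2)(a); the only place one must be a bit careful is making sure the chosen lift of $x$ really has zeroth coordinate $u_0$, which is immediate from $x\in Z(u_0)$, so I do not anticipate a real obstacle.
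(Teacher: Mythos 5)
Your proposal is correct, and it is essentially the standard argument one expects for this lemma (which the paper itself delegates to \cite[Lemma~4.1]{SRBleaf} rather than proving in-text): pick a witness $x\in R_0\cap f^{-1}[R_1]$, lift it to a chain in $\Sigma^{\#}$ with zeroth symbol $u_0$ using the definition of $Z(u_0)$, read off $u_1$ as the next coordinate, use equivariance $\pi\circ\sigma=f\circ\pi$ together with $\sigma$-invariance of $\Sigma^{\#}$ to place $f(x)\in Z(u_1)\cap R_1$, and finish with the refinement property in Definition~\ref{Doomsday}(2)(a). The appeal to Proposition~\ref{cafeashter} is superfluous, since $x\in Z(u_0)\subseteq\pi[\Sigma^{\#}]$ already, but nothing is wrong.
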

See \cite[Lemma~4.1]{SRBleaf} for proof.
\begin{definition}
Given a chain $\underline{R}\in \Sig_L$ we say that a chain $\underline{u}\in\mathcal{V}^{-\mathbb{N}}$ {\em covers} the chain $\underline{R}$ if $\underline{u}$ is admissible and $R_i\subseteq Z(u_i)$ for all $i\leq0$. We write $\ul{u}\CAR \ul{R}$.
\end{definition}
By Lemma \ref{nextstepcover}, for every chain $\underline{R}\in \Sig_L$, the collection of chains which cover $\underline{R}$ is not empty.

\begin{definition}
	$\Sigma_L:=\{(u_i)_{i\leq0}:\ul{u}\in\Sigma\}$.
\end{definition}

\begin{prop}
$\forall \ul{R}\in\Sig_L$, $\exists$ a local unstable leaf of maximal dimension $V^u(\ul{R})$ s.t $f^{-1}[V^u(\ul{R})]\subseteq V^u(\sigma_R\ul{R})$, and $V^u(\ul{R})$ is an open submanifold of $M$, with finite and positive induced Riemannian volume. In addition, $V^u(\ul{R})\subseteq \psi_{x}[\{v:|v|_\infty\leq p^u\}]$ for all $u=\psi_{x}^{p^s,p^u}\in \mathcal{V}$ s.t $Z(u)\supseteq R_0$, and $\ul{R}\mapsto V^u(\ul{R})$ is continuous in $C^1$-norm in any local choice of coordinates. Moreover, $V^u(\ul{R})\subseteq V^u(\ul{u})$ $\forall \ul{u}\in \Sigma_L$ s.t $\ul{u}\CAR\ul{R}$.
\end{prop}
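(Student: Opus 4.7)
The plan is to define $V^u(\ul{R})$ as the intersection of the previously constructed maximal-dimension unstable leaves associated to all symbolic covers:
\[ V^u(\ul{R}) \,:=\, \bigcap_{\ul{u}\CAR \ul{R}} V^u(\ul{u}). \]
By Lemma~\ref{nextstepcover} the collection of covers is non-empty. By the local-finiteness statement in Claim~\ref{localfinito} (together with Definition~\ref{N_R}) only finitely many $u_0\in\mathcal{V}$ satisfy $Z(u_0)\supseteq R_0$, and the graph $\mathcal{G}$ has finite in-degree, so each cover is drawn from a locally-finite set of choices at every past step — this local-finiteness is the technical backbone of everything that follows.

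The first thing I would verify is that the intersection is non-empty, open in the induced leaf topology, and is a $C^{1+\beta/3}$ submanifold of maximal dimension. Fix any cover $\ul{u}\CAR \ul{R}$ and look at $V^u(\ul{u})$ from Claim~\ref{chikfila}, an embedded $C^{1+\beta/3}$ maximal-dimension leaf sitting inside the unstable square $\psi_{x}[\{|v|_\infty\leq p^u\}]$ of the chart $u_0=\psi_{x}^{p^s,p^u}$. Because each $R_0\in\mathcal{R}$ is a rectangle (Definition~\ref{Doomsday}(2)(c)) obtained by the Bowen-Sinai refinement, its ``unstable slice'' inside any such chart is open in the unstable direction. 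Comparing two covers $\ul{u},\ul{u}'\CAR \ul{R}$: both $V^u(\ul{u})$ and $V^u(\ul{u}')$ are maximal-dimension unstable leaves through the common set $W^u(\ul{R})\cap R_0$, so they locally coincide as graphs over the unstable subspace, since the unstable manifold through a point is uniquely determined by its backward orbit behaviour. This collapses the intersection to a single relatively open piece whose intrinsic diameter is controlled by the unstable size of $R_0$, and which sits inside $\psi_x[\{|v|_\infty\leq p^u\}]$ for every chart $u=\psi_x^{p^s,p^u}$ with $Z(u)\supseteq R_0$.

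The remaining properties follow by transport. For $f^{-1}[V^u(\ul{R})]\subseteq V^u(\sigma_R\ul{R})$: whenever $\ul{u}\CAR \ul{R}$, the left-shift of $\ul{u}$ covers $\sigma_R\ul{R}$, and Claim~\ref{chikfila} gives $f^{-1}[V^u(\ul{u})]\subseteq V^u(\sigma\ul{u})$; intersecting over covers yields the claim. Finite positive induced Riemannian volume follows from $V^u(\ul{R})$ being a non-empty bounded open $C^1$ submanifold inside a Pesin chart. For $C^1$-continuity in $\ul{R}$: if $\ul{R}^{(n)}\to\ul{R}$ in $\Sig_L$ then their coordinates coincide on $[-n,0]$ for large $n$; by local-finiteness I can select covers $\ul{u}^{(n)}\CAR \ul{R}^{(n)}$ agreeing with a fixed cover $\ul{u}\CAR \ul{R}$ on $[-n,0]$, and by the Hadamard-Perron graph-transform construction underlying Claim~\ref{chikfila} one has $V^u(\ul{u}^{(n)})\to V^u(\ul{u})$ in $C^1$; the controlled intersection then passes to the limit.

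The main obstacle I expect is the compatibility step: proving that distinct covers $\ul{u},\ul{u}'$ of the same $\ul{R}$ yield leaves that agree on an open piece of uniform size. This is not automatic at the graph-theoretic level — it needs the fact that the Markov property of $\mathcal{R}$ forces the ambiguity between two covers to lie entirely in their ``stable coordinate label'', while the geometric unstable leaf depends only on the true backward dynamics through $R_0$. Making this precise amounts to combining the rectangle property of $\mathcal{R}$, the local-finiteness of the cover $\mathcal{Z}$, and the graph-transform uniqueness of unstable leaves, which together collapse the intersection over potentially infinitely many covers into a single well-defined open submanifold.
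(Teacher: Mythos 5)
Your intersection definition is a natural guess, and the containment property $V^u(\ul{R})\subseteq V^u(\ul{u})$ would then hold by fiat, but the argument has a real gap precisely at the step you flag as the ``main obstacle,'' and the anchor you propose to resolve it is unavailable.

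First, the compatibility argument is built on the claim that the leaves $V^u(\ul{u})$ for different covers $\ul{u}\CAR\ul{R}$ all pass through the common set $W^u(\ul{R})\cap R_0$ and therefore coincide on an open piece. But the paper states explicitly, immediately after Definition~\ref{chikfilb}, that $W^u(\ul{R})$ may be \emph{empty}. The proposition is asserted for every $\ul{R}\in\Sig_L$, not only for $\ul{R}\in\Sig^\circ_L$ where $W^u(\ul{R})\neq\varnothing$, so the candidate anchor set can contain no points at all and the pairwise-coincidence argument has nothing to latch onto. Second, even when $W^u(\ul{R})\neq\varnothing$, Pesin uniqueness of unstable manifolds through a shared point only gives a local agreement whose radius depends on the two leaves being compared; since a one-sided chain $\ul{R}$ can have uncountably many covers (finitely many choices at each past step still yields an infinite product), you would need a \emph{uniform} lower bound on the radius of agreement over all covers to conclude that $\bigcap_{\ul{u}\CAR\ul{R}}V^u(\ul{u})$ contains any open set, let alone has positive induced volume. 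That uniformity is the whole content of the statement and is never established; it is also what the subsequent positive-volume and $C^1$-continuity steps silently depend on. The cited source (\textsection~4.1 of the SRB-leaf paper) does not define $V^u(\ul{R})$ by an a posteriori intersection but constructs it directly from one admissible cover via graph transforms, obtaining the uniform size bound in terms of the window parameters $p^u$ from the Markov structure, and then verifies the containment $V^u(\ul{R})\subseteq V^u(\ul{u})$ for every other cover $\ul{u}$ afterwards; your approach reverses that order and loses the quantitative control you would need to make the intersection open and nondegenerate.
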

See \cite[\textsection~4.1]{SRBleaf} for proof.

\section{A Markovian and Absolutely Continuous Family of Measures on the Symbolic Space}

\subsection{The Ruelle Operator and Sinai's Theorem}
\begin{definition}[Ruelle operator]\label{Ruelleo}
Let $\Tigma_L$ be a one-sided irreducible locally compact topological Markov shift (of negative chains), and let $\phi:\Tigma_L\rightarrow\mathbb{R}$ be a H\"older continuous potential. The associated {\em Ruelle operator} $L_\phi:C(\Tigma_L)\rightarrow C(\Tigma_L)$, is defined by $$(L_\phi h)(\ul{R}):=\sum_{\tigma\ul{S}=\ul{R}}e^{\phi(\ul{S})}h(\ul{S}),$$
where $h\in C(\Tigma_L)$, and $\tigma:\Tigma_L\rightarrow\Tigma_L$ is the right-shift.
\end{definition}

\begin{definition}
	Let $X$ be a topological Markov shift (either one-sided or two-sided). A function $\wh{\varphi}:X\rightarrow \mathbb{R}$ is called {\em weakly H\"older continuous}, if $\exists C>0,\alpha>0$, s.t $\forall \ul{x},\ul{y}\in X$ s.t $d(\ul{x},\ul{y})\leq e^{-1}$, we have $|\wh{\varphi}(\ul{x})-\wh{\varphi}(\ul{y})|\leq C\cdot d(\ul{x},\ul{y})^\alpha$.
\end{definition}
This notion is introduced in \cite{SarigTDF}, with the weaker assumption of $d(\ul{x},\ul{y})\leq e^{-2}$. 

Recall Definition \ref{MnfldWeakHolder}.
\begin{claim}\label{forweakholder}
Let $\varphi:WT_\chi^\epsilon\rightarrow\mathbb{R}$ be a potential which is $\cont$. Let $\wh{\varphi}:\Sig\rightarrow \mathbb{R}$, $\wh{\varphi}:=\varphi\circ\wpi$, be the lift of $\varphi$. Then $\wh{\varphi}$ is weakly H\"older continuous on $\Sig$.
\end{claim}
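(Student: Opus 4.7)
The strategy is to control the Lyapunov distance $d_{GH}(\wpi(\ul R),\wpi(\ul S))$ on $M$ by a Hölder function of the symbolic distance $d(\ul R,\ul S)$, and then compose with the Grassmann-Hölder regularity of $\varphi$.

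Fix $\ul R,\ul S\in\Sig$ with $d(\ul R,\ul S)\leq e^{-1}$, and write $d(\ul R,\ul S)=e^{-n}$ for some $n\geq 1$, so that $R_i=S_i$ for all $|i|<n$. I would estimate each of the three summands of $d_{GH}(\wpi(\ul R),\wpi(\ul S))$ separately. The first summand, $d(\wpi(\ul R),\wpi(\ul S))$, is bounded by $C_1 e^{-\alpha_1 n}$ directly from item (1) of the theorem listing the properties of $\wpi$. For the second summand, $d_u(H^u(\wpi(\ul R)),H^u(\wpi(\ul S)))$, I would use that $H^u(\wpi(\ul R))$ is the tangent space at $\wpi(\ul R)$ to the maximal-dimension unstable leaf $V^u(\ul R)$ (Claim \ref{chikfila} together with the proposition at the end of \textsection 3), and that $V^u(\ul R)$ depends only on $(R_i)_{i\leq 0}$, being the graph of a $C^{1+\beta/3}$ function in a Pesin chart associated to $R_0$, obtained as the fixed point of a contracting graph transform. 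Agreement of $R_i=S_i$ for $-(n-1)\leq i\leq 0$ then forces the two unstable graphs to coincide in $C^1$-norm up to $C_2 e^{-\alpha_2 n}$ in a common chart, and consequently their tangent spaces lie within $C_2 e^{-\alpha_2 n}$ in the Grassmannian. The third summand, $d_s(H^s(\wpi(\ul R)),H^s(\wpi(\ul S)))$, is handled symmetrically via the stable leaves $V^s(\ul R)$, which are determined by $(R_i)_{i\geq 0}$. Summing the three bounds yields $d_{GH}(\wpi(\ul R),\wpi(\ul S))\leq C\,d(\ul R,\ul S)^\alpha$ with $\alpha:=\min(\alpha_1,\alpha_2,\alpha_3)>0$.

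Applying now the Grassmann-Hölder regularity of $\varphi$ with constant $K$ and exponent $\beta'$ gives
$$|\wh\varphi(\ul R)-\wh\varphi(\ul S)|=|\varphi(\wpi(\ul R))-\varphi(\wpi(\ul S))|\leq K\,(C\,d(\ul R,\ul S)^\alpha)^{\beta'}=KC^{\beta'}\,d(\ul R,\ul S)^{\alpha\beta'},$$
which is exactly the weak Hölder bound on the set $\{d\leq e^{-1}\}$.

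The main obstacle is promoting the cited $C^1$-continuity of $\ul R\mapsto V^u(\ul R)$ to a quantitative Hölder rate, i.e.\ extracting a uniform contraction exponent from the Pesin graph-transform construction used in \cite{SBO,Sarig,SRBleaf} to build the unstable leaves, with constants depending only on $f,\chi,\beta$. This is standard in Pesin theory but is the only place in the argument that requires careful bookkeeping; once this quantitative step (and its stable analogue) is in place, the bound on the base-point distance, the assembly of the three Grassmannian estimates, and the composition with $\varphi$ are routine.
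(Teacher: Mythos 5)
Your approach is the right one and is essentially what the paper does: the paper's proof consists of the single line ``This follows from \cite[Proposition~6.1]{SBO},'' and that proposition is precisely the quantitative statement that $\wpi$ is H\"older from $(\Sig,d)$ to the Lyapunov metric $d_{GH}$ \textemdash\ i.e. it packages the three Grassmannian/base-point estimates you assemble by hand. The decomposition into base-point distance (from the ordinary H\"older continuity of $\wpi$), unstable Grassmannian distance (from the graph-transform contraction making $\ul{R}\mapsto V^u(\ul{R})$ depend only on $(R_i)_{i\leq0}$ with an exponential $C^1$-rate), and stable Grassmannian distance (symmetrically) is exactly how that proposition is proven. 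The only thing you leave unproven \textemdash\ the uniform contraction exponent for the graph transform yielding a H\"older rate for $\ul{R}\mapsto T_{\wpi(\ul{R})}V^{s/u}(\ul{R})$ \textemdash\ is precisely what \cite[Proposition~6.1]{SBO} supplies (together with \cite[Propositions~3.12, 4.4]{SBO} for the underlying stable/unstable manifold estimates); had you cited it instead of deferring to ``standard Pesin theory,'' the proposal would have been a complete unwinding of the paper's citation. No genuine gap or misstep beyond that deferral.
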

This follows from \cite[Proposition~6.1]{SBO}.

\begin{theorem}[Sinai]\label{SinaiBowen}
Let $\varphi: \HWT_\chi\rightarrow\mathbb{R}$ be a $\cont$ potential. Then there exist two functions $\varphi^*,A: \bigcupdot \mathcal{R}\rightarrow\mathbb{R}$ such that $\varphi^*(x)= \varphi^*((R(f^{-i}(x)))_{i\geq0})$ depends only on the past of the itinerary of $x$, $\varphi^*=\varphi+A-A\circ f^{-1}$, $A$ is bounded, and $\wh{A}:=A\circ \wpi, \phi:=\varphi^*\circ \wpi$ are weakly H\"older continuous.
\end{theorem}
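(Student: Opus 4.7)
The plan is to use the classical Sinai-Bowen coboundary construction to straighten $\hat\varphi := \varphi \circ \wh\pi$ into a cohomologous function that depends only on the past coordinates of $\Sig$. First, Claim \ref{forweakholder} tells us $\hat\varphi$ is weakly Hölder on $\Sig$ with some constants $C,\alpha>0$. For each symbol $R\in\mathcal R$, pick once and for all an admissible forward continuation $\xi(R)=(\xi(R)_i)_{i\ge0}\in\mathcal R^{\mathbb N}$ with $\xi(R)_0=R$ (such a continuation exists because the directed graph underlying $\Sig$ has no isolated vertex, every element of $\mathcal R$ being visited by at least one chain in $\Sig^\#$). Define a ``past-preserving retraction'' $r:\Sig\to\Sig$ by
$$r(\ul R)_i := \begin{cases} R_i, & i\le 0, \\ \xi(R_0)_i, & i>0.\end{cases}$$
By construction $r(\ul R)$ depends only on $(R_i)_{i\le 0}$, and, because $r(\ul R)$ agrees with $\ul R$ on the block $i\le 0$, the two chains $\sigma^{-n}\ul R$ and $\sigma^{-n} r(\ul R)$ agree on all indices $i\le n$; hence their symbolic distance is at most $e^{-(n+1)}$. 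Consequently
$$\wh A(\ul R) := \sum_{n=0}^{\infty}\Bigl[\hat\varphi(\sigma^{-n}\ul R)-\hat\varphi(\sigma^{-n}r(\ul R))\Bigr]$$
converges absolutely and uniformly on $\Sig$, with each summand bounded by $Ce^{-\alpha(n+1)}$. In particular $\wh A$ is continuous and $\|\wh A\|_\infty \le C\sum_{n\ge 0}e^{-\alpha(n+1)}<\infty$.

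Next I would verify the cohomological identity and descend to $M$. A direct telescoping computation, collecting like terms, gives
$$\wh A(\ul R)-\wh A(\sigma^{-1}\ul R) = \hat\varphi(\ul R)-\hat\varphi(r(\ul R))+\sum_{n=0}^{\infty}\Bigl[\hat\varphi(\sigma^{-n}r(\sigma^{-1}\ul R))-\hat\varphi(\sigma^{-n-1}r(\ul R))\Bigr].$$
The right-hand side minus $\hat\varphi$ equals $\phi(\ul R) := -\hat\varphi(r(\ul R))+\sum_{n\ge 0}[\hat\varphi(\sigma^{-n}r(\sigma^{-1}\ul R))-\hat\varphi(\sigma^{-n-1}r(\ul R))]+\hat\varphi(\ul R)\cdot 0$; after absorbing $\hat\varphi$ into the relation I set $\phi := \hat\varphi + \wh A - \wh A\circ\sigma^{-1}$ and observe that $\phi(\ul R)$ only involves $r(\ul R)$, $r(\sigma^{-1}\ul R)$ and their backward shifts, each of which is a function of $(R_i)_{i\le 0}$ alone. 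Thus $\phi$ is past-measurable, i.e.\ $\phi(\ul R) = \phi((R_{-i})_{i\ge 0})$. Using the canonical coding $\ul R(\cdot)$ from Definition \ref{canonico}, for which $\wh\pi(\ul R(x))=x$ and $\ul R(f^{-1}(x))=\sigma^{-1}\ul R(x)$, define
$$A(x) := \wh A(\ul R(x)),\qquad \varphi^*(x) := \phi(\ul R(x)),\qquad x\in\bigcupdot\mathcal R.$$
Then $A$ is bounded, $\varphi^*(x)$ depends only on the itinerary $(R(f^{-i}(x)))_{i\ge 0}$, and the identity $\phi=\hat\varphi+\wh A-\wh A\circ\sigma^{-1}$ restricted along canonical codings gives $\varphi^* = \varphi + A - A\circ f^{-1}$ on $\bigcupdot\mathcal R$.

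The main obstacle is proving weak Hölder continuity of $\wh A$ (and therefore of $\phi$), because when $\ul R,\ul R'\in\Sig$ agree on the symmetric block $|i|\le N$, the shifted pairs $\sigma^{-n}\ul R,\sigma^{-n}\ul R'$ lose agreement on the forward side once $n$ exceeds $N$, so the termwise Hölder estimate breaks down. The standard fix is a split-the-sum argument: for $0\le n\le N/2$ the shifted chains still agree on $[n-N,n+N]$, so one Hölder-bounds each of the two differences $|\hat\varphi(\sigma^{-n}\ul R)-\hat\varphi(\sigma^{-n}\ul R')|$ and $|\hat\varphi(\sigma^{-n}r(\ul R))-\hat\varphi(\sigma^{-n}r(\ul R'))|$ by $Ce^{-\alpha(N-n+1)}\le Ce^{-\alpha N/2}$; for $n>N/2$ one bounds each summand of $\wh A(\ul R)$ and $\wh A(\ul R')$ individually by $Ce^{-\alpha(n+1)}$ and sums the tail, yielding $\tfrac{2C}{1-e^{-\alpha}}e^{-\alpha N/2}$. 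Combining gives $|\wh A(\ul R)-\wh A(\ul R')|\le C'(N+1)e^{-\alpha N/2}\le C''d(\ul R,\ul R')^{\alpha/3}$ whenever $d(\ul R,\ul R')\le e^{-1}$, which is the weak Hölder property. Since $\phi=\hat\varphi+\wh A-\wh A\circ\sigma^{-1}$ is a sum of weakly Hölder functions, $\phi$ is weakly Hölder on $\Sig$, completing the proof.
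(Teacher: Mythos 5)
Your construction is essentially the paper's Sinai-type coboundary straightening transported to the symbolic side: the paper straightens the future of $x$ by forming the Smale bracket $x^*=[x,y_{R(x)}]_{R(x)}$ with a fixed reference point $y_{R(x)}\in R(x)$, and you straighten the future of $\ul R$ with the retraction $r(\ul R)$; under $\wpi$ these are the same operation. Your split-the-sum estimate for the weak H\"older regularity of $\wh A$ is also standard, and supplies a detail the paper outsources to Sinai's finite-alphabet argument.

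There is, however, a genuine sign error that invalidates the cohomology step as you wrote it. You define
$$\wh A(\ul R)=\sum_{n\ge 0}\Bigl[\hat\varphi(\sigma^{-n}\ul R)-\hat\varphi(\sigma^{-n}r(\ul R))\Bigr],$$
and your telescoping identity
$$\wh A(\ul R)-\wh A(\sigma^{-1}\ul R)=\hat\varphi(\ul R)-\hat\varphi(r(\ul R))+\sum_{n\ge 0}\Bigl[\hat\varphi(\sigma^{-n}r(\sigma^{-1}\ul R))-\hat\varphi(\sigma^{-n-1}r(\ul R))\Bigr]$$
is correct. But from this very identity,
$$\hat\varphi+\wh A-\wh A\circ\sigma^{-1}=2\hat\varphi-\hat\varphi\circ r+\bigl(\text{a function of }(R_i)_{i\le0}\text{ alone}\bigr),$$
and the leading $2\hat\varphi$ depends on the entire two-sided itinerary, so your $\phi:=\hat\varphi+\wh A-\wh A\circ\sigma^{-1}$ is \emph{not} past-measurable. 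Your own prose contradicts itself at this point: you correctly observe that ``the right-hand side minus $\hat\varphi$'', namely $\wh A-\wh A\circ\sigma^{-1}-\hat\varphi$, is past-measurable, but then you declare $\phi:=\hat\varphi+\wh A-\wh A\circ\sigma^{-1}$, which differs from that quantity by $2\hat\varphi$. The fix is simply to flip the sign of $\wh A$, that is, define $\wh A(\ul R):=\sum_{n\ge 0}\bigl[\hat\varphi(\sigma^{-n}r(\ul R))-\hat\varphi(\sigma^{-n}\ul R)\bigr]$, which is precisely the symbolic transcription of the paper's $A(x)=\sum_{n\ge 0}\bigl[\varphi(f^{-n}x^*)-\varphi(f^{-n}x)\bigr]$. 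With this sign one obtains
$$\hat\varphi+\wh A-\wh A\circ\sigma^{-1}=\hat\varphi\circ r-\sum_{n\ge 0}\Bigl[\hat\varphi(\sigma^{-n}r(\sigma^{-1}\ul R))-\hat\varphi(\sigma^{-n-1}r(\ul R))\Bigr],$$
which depends only on $(R_i)_{i\le 0}$; the descent to $M$ via the canonical coding and the rest of your argument then go through unchanged.
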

This theorem appears in Sinai's fundamental paper \cite{SinaiGibbs}. He proves it in the context of a topological Markov shift with a finite alphabet (for the countable case see \cite{Daon}). We give the proof in the case when a countable Markov partition exists.
\begin{proof}
	First, $\forall a\in \mathcal{R}$, fix a point $y_a\in a$. $\forall x\in\bigcupdot\mathcal{R}$, define $x^*:=[x,y_{R(x)}]_{R(x)}\in R(x)$, where $R(x)$ is unique partition element of $\mathcal{R}$ which contains $x$, and $[\cdot,\cdot]_{R(x)}$ denotes the Smale bracket of two points in $R(x)$ (the right-hand term yields its future, and the left-hand terms yields its past). Define $A:\bigcupdot\mathcal{R}\rightarrow\mathbb{R}$,
	$$A(x):=\sum_{n\geq0} \varphi(f^{-n}(x^*))-\varphi(f^{-n}(x)).$$
By Claim \ref{forweakholder}, $\wh{\varphi}:=\varphi\circ\wpi$ is weakly H\"older continuous. In addition, $d(\ul{R}(f^{-n}(x)), \ul{R}(f^{-n}(x^*)))\leq e^{-n}$. Let $C>0,\theta\in (0,1)$ be the weak H\"older constants of $\wh{\varphi}$ s.t $d(\ul{R}^{(1)}, \ul{R}^{(2)})\leq e^{-n}\Rightarrow$ $|\wh{\varphi}(\ul{R}^{(1)})-\wh{\varphi}(\ul{R}^{(2)})|\leq C\cdot \theta^n$. Then $\wh{A}$ is a uniformly convergent series with uniformly bounded and equicontinuous summands, and so $\wh{A}$ is bounded and continuous
. In particular, $A$ is also bounded.

Let $\varphi^*:=\varphi+A-A\circ f^{-1}$ and $\phi:=\varphi^*\circ\wpi$. The weak H\"older continuity of $\wh{A}=A\circ \wpi$ and of $\phi$ follows from the weak H\"older continuity of $\wh{\varphi}$ and the proof of the finite alphabet case in \cite{SinaiGibbs}.	

It remains to show that $\varphi(x)= \varphi((R(f^{-i}(x)))_{i\geq0})$.
\begin{align*}
\varphi^*(x)&=\varphi(x)+A(x)-A(f^{-1}(x))=\\
&=\varphi(x)+ \sum_{n\geq0} \varphi(f^{-n}(x^*))-\varphi(f^{-n}(x))-\sum_{n\geq0} \varphi(f^{-n}((f^{-1}(x))^*))-\varphi(f^{-n-1}(x))\\
&=\varphi(x^*)+ \sum_{n\geq0} \varphi\left(f^{-n}\left(f^{-1}(x^*)\right)\right)-\varphi\left(f^{-n}\left(\left(f^{-1}(x)\right)^*\right)\right).
\end{align*}
The last expression depends only on $(R(f^{-i}(x)))_{i\geq0}$.

\end{proof}
\noindent\textbf{Remark:} Although $\wh{\varphi},\wh{A},\phi$ are defined on $\Sig^\circ$, they extend continuously to $\Sig$ because of their uniform moduli of continuity and the fact that $\Sig^\circ$ is dense in $\Sig$. Thus $\phi:\Sig_L\rightarrow\mathbb{R}$ is well defined and weakly H\"older continuous.

\subsection{Recurrence, Harmonic Functions, and Conformal Measures}\label{rec1}
As before, $\Tigma_L$ is a maximal irreducible component of $\Sig_L$, and $\phi:\Tigma_L\rightarrow\mathbb{R}$ is a weakly H\"older continuous potential.

In this section we use the Gurevich pressure, and we begin by recalling its definition. For a weakly H\"older potential $\zeta:\Sig_L\rightarrow\mathbb{R}$ , the {\em local partition functions} are for $n\geq 1$, $$Z_n(\zeta,R):=\sum\limits_{\ul{S}\in \Sig_L\cap[R],\sigma_R^n\ul{S}=\ul{S}}e^{\sum_{k=0}^{n-1}\zeta(\sigma_R^k\ul{S})}, $$ where the sum has finitely many terms since $\Sig_L$ is locally compact. The {\em Gurevich pressure} of the potential $\zeta$ is $$P_G(\zeta):=\limsup\limits_{n\rightarrow\infty}\frac{1}{n}\log Z_n(\zeta,R)\in(-\infty,\infty].$$ When $\Sig_L$ is irreducible, the limit is independent of the choice of the symbol $R$ (see \cite[Proposition~3.2]{SarigTDF}). We say that the potential $\zeta$ is {\em recurrent} if for some symbol $R$, $$\sum_{n\geq1}e^{-nP_G(\zeta)}Z_n(\zeta,R)=\infty.$$ In this case the sum diverges for all $R$, see \cite[Corollary~3.1]{SarigTDF}. We say that the potential $\zeta$ is {\em positive recurrent} if it is recurrent, and for some symbol $R$, $$\sum_{n\geq1}n\cdot e^{-nP_G(\zeta)} \sum\limits_{\substack{\ul{S}\in \Sig_L\cap[R]\text{, s.t }\sigma_R^n\ul{S}=\ul{S},\\ \text{ and }\forall 0< l<n, S_{-l}\neq R}}e^{\sum_{k=0}^{n-1}\zeta(\sigma_R^k\ul{S})}<\infty.$$  Again, this property turns out to be independent of $R$. For more details, see \cite[\textsection~3.1.3]{SarigTDF}. 
For a detailed review of the properties of recurrent potentials, see \cite{SarigTDF}.

\begin{definition}
	Let $\phi:\Tigma_L\rightarrow\mathbb{R}$ be a weakly H\"older continuous potential s.t $P_G(\phi)<\infty$, where $\Tigma_L$ is a maximal irreducible component of $\Sig_L$. A positive and continuous function $\psi:\Tigma_L\rightarrow\mathbb{R}^+$ is called {\em $\phi$-harmonic} if $L_\phi\psi= e^{P_G(\phi)}\psi$. A non-zero Radon measure $p$ on $\Tigma_L$ is called {\em $\phi$-conformal} if $L_\phi^*p= e^{P_G(\phi)}p$, where $L_\phi^*$ is the dual operator of $L_\phi$.
\end{definition}
 
 Notice, $\Tigma_L$ may be non-compact, and thus $p$ may be infinite.

\begin{theorem}
	Let $\phi:\Tigma_L\rightarrow\mathbb{R}$ be a weakly H\"older continuous potential, where $\Tigma_L$ is an irreducible locally compact one-sided topological Markov shift. If $P_G(\phi)<\infty$ then there exist a $\phi$-harmonic function $\psi$, and a $\phi$-conformal measure $p$.
\end{theorem}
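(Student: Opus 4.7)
The plan is to prove a generalized Ruelle-Perron-Frobenius theorem by exhausting $\Tigma_L$ with finite-alphabet irreducible subshifts of finite type, applying the classical Perron-Frobenius theorem on each, and then passing to a limit. Since $\Tigma_L$ is irreducible and locally compact, I first fix an increasing sequence $F_1\subseteq F_2\subseteq\cdots$ of finite subsets of the alphabet whose union is everything, chosen so that for each $n$ the subshift $\Tigma_L^{(n)}:=\{\ul{R}\in\Tigma_L:R_i\in F_n\text{ for all }i\leq 0\}$ is a nonempty irreducible SFT containing a fixed reference symbol $R^*$. Writing $\lambda_n:=e^{P_G(\phi|_{\Tigma_L^{(n)}})}$, monotonicity of the local partition functions under alphabet inclusion gives $\lambda_n\nearrow e^{P_G(\phi)}$ as $n\to\infty$.

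To construct $\psi$, I apply the classical Ruelle-Perron-Frobenius theorem on the finite irreducible SFT $\Tigma_L^{(n)}$ to obtain a positive H\"older eigenfunction $h_n$ with $L_\phi h_n=\lambda_n h_n$, normalized by $h_n(\ul{x}^*)=1$ at a fixed point $\ul{x}^*\in[R^*]$. Weak-H\"older continuity of $\phi$ should yield a uniform bounded-distortion estimate: for any symbol $a$ and any $\ul{R},\ul{S}\in\Tigma_L^{(n)}\cap[a]$ agreeing in their first $k$ coordinates, $h_n(\ul{R})/h_n(\ul{S})=\exp(O(\theta^k))$ with constants independent of $n$. This gives uniform upper and lower bounds and equicontinuity of $\{h_n\}$ on each cylinder; a diagonal Arzel\`a-Ascoli extraction then produces a positive weakly H\"older continuous pointwise limit $\psi$ on $\bigcup_n\Tigma_L^{(n)}$, extending continuously to $\Tigma_L$. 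Monotone convergence in the Ruelle sum, combined with $\lambda_n\to e^{P_G(\phi)}$, yields $L_\phi\psi=e^{P_G(\phi)}\psi$.

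For the conformal measure $p$, the dual side of the finite RPF theorem gives probability eigenmeasures $\nu_n$ on $\Tigma_L^{(n)}$ with $L_\phi^*\nu_n=\lambda_n\nu_n$; irreducibility ensures $\nu_n([R^*])>0$, so I renormalize $\tilde\nu_n:=\nu_n/\nu_n([R^*])$. The dual distortion estimate provides uniform Radon bounds $\tilde\nu_n([a])\leq C_a$ on each cylinder. By local compactness of $\Tigma_L$ and a diagonal extraction, some subsequence converges vaguely to a Radon measure $p$ with $p([R^*])=1$, so $p$ is nontrivial. To pass to the limit in the eigenequation, I test against a compactly supported continuous function $f$: its support lies in finitely many cylinders, hence inside $\Tigma_L^{(n)}$ for $n$ large, and by local finiteness of the graph $L_\phi f$ is also compactly supported. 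The identity $\int L_\phi f\,d\tilde\nu_n=\lambda_n\int f\,d\tilde\nu_n$ then passes to the limit to give $L_\phi^*p=e^{P_G(\phi)}p$.

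The main obstacle will be the uniform distortion bound for the eigenfunctions $h_n$: one must show that the cone-contraction constants for $L_\phi$ on each finite subsystem depend only on the weak-H\"older parameters of $\phi$, not on the finite alphabet $F_n$. This is the technical heart of the argument and is where the weak-H\"older hypothesis is essential; once it is in hand, both the Arzel\`a-Ascoli extraction producing $\psi$ and the Radon compactness producing $p$ become routine. Secondary issues are verifying $\lambda_n\to e^{P_G(\phi)}$ (the lower bound uses that every periodic loop through $R^*$ eventually lies in some $F_n$ by local finiteness of the graph) and ensuring that the normalization $p([R^*])=1$ is preserved under vague limits, which holds because cylinder sets are compact open in a locally compact TMS.
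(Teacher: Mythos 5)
The paper does not prove this theorem; it cites Sarig~\cite{SarigPR,SarigNR} for the recurrent case and Cyr~\cite{Cyr} and Shwartz~\cite{Shwartz} for the transient case (conformal measure and harmonic function respectively), so you are reconstructing a proof rather than matching one in the text. Your exhaustion strategy is a reasonable reconstruction, broadly in the spirit of Cyr's work, but it has a genuine gap at precisely the step you flag as the ``technical heart.'' The within-cylinder distortion estimate $h_n(\ul{R})/h_n(\ul{S})=\exp(O(\theta^k))$ does hold uniformly in $n$ --- it comes from the ratio $L_\phi^m\mathbb{1}(\ul{R})/L_\phi^m\mathbb{1}(\ul{S})$ and depends only on the weak-H\"older constants of $\phi$ --- but it controls only the \emph{variation} of $h_n$ on a one-symbol cylinder, not its absolute size. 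The normalization $h_n(\ul{x}^*)=1$ together with distortion pins $h_n$ down on $[R^*]$; to bound $h_n$ on a different cylinder $[a]$ you must compare $h_n|_{[a]}$ to $h_n|_{[R^*]}$, and the distortion estimate alone does not do this. The missing argument iterates the eigenequation $h_n=\lambda_n^{-m}L_\phi^m h_n$ along a fixed admissible path from $a$ to $R^*$ (isolating that term for the lower bound) and from $R^*$ to $a$ (for the upper bound). This is where $P_G(\phi)<\infty$ actually enters --- to bound the factors $\lambda_n^{\pm m}$ uniformly in $n$ --- and it lives here, \emph{not} in uniform cone-contraction constants, which will generally not be uniform across the $F_n$ since Birkhoff contraction coefficients degrade with the mixing time of the finite transition matrix. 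Exactly the same eigenequation-plus-paths comparison applied to $\nu_n$ is what delivers the Radon bound $\tilde\nu_n([a])\leq C_a$; labelling it a ``dual distortion estimate'' obscures what is doing the work.

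Two further corrections you would need to make for the proof to close. First, the support of a compactly supported $f$ does \emph{not} lie inside $\Tigma_L^{(n)}$ for large $n$: a chain can have $R_0\in F_n$ and still have far-left symbols outside $F_n$. What you actually need, and what suffices, is that the finitely many symbols appearing in the one-symbol cylinders covering $\mathrm{supp}\,f$ are contained in $F_n$; then for $\ul{R}\in\Tigma_L^{(n)}$ every preimage $\ul{S}\notin\Tigma_L^{(n)}$ has $S_0\notin F_n$, hence $f(\ul{S})=0$, so the restricted and full Ruelle operators agree on $f$ after integrating against $\tilde\nu_n$. Second, the convergence of $L_\phi h_n$ is not monotone; it holds because $\tigma^{-1}(\ul{R})$ is finite (local finiteness of the graph) and is eventually contained in $\Tigma_L^{(n)}$, so the sum is a fixed finite sum whose terms converge under the Arzel\`a--Ascoli extraction. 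Related to this, you should state explicitly that $\bigcup_n\Tigma_L^{(n)}$ (chains using only finitely many distinct symbols) is dense in $\Tigma_L$, so that the continuous extension of $\psi$ is well-defined, and that both sides of $L_\phi\psi=e^{P_G(\phi)}\psi$ are continuous on $\Tigma_L$, so the eigenequation extends from the dense set to all of $\Tigma_L$.
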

This theorem is due to work of Sarig \cite{SarigPR,SarigNR}, Cyr \cite{Cyr}, and Shwartz \cite{Shwartz}. Sarig had proven that when $\phi$ is recurrent, $\psi$ and $p$ exist and are unique up to scaling. In the transient case, Cyr showed the existence of a $\phi$-conformal measure, and Shwartz proved the existence of a $\phi$-harmonic function with a weakly H\"older logarithm. In the transient case, the $\phi$-harmonic functions and the $\phi$-conformal measures are not always unique up to scaling. For the structure of the set of these objects, see \cite{Shwartz}.
\begin{lemma}\label{logHarmonicHolder}
	Let $\phi:\Tigma_L\rightarrow\mathbb{R}$ be a weakly H\"older continuous potential, where $\Tigma_L$ is an irreducible locally compact one-sided topological Markov shift. Let $\psi:\Tigma_L \rightarrow\mathbb{R}^+$ be a $\phi$-harmonic function. Then $\log\psi$ is weakly H\"older continuous.
\end{lemma}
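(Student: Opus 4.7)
The plan is to derive the weakly H\"older regularity of $\log\psi$ from a preimage-pairing estimate on iterated Ruelle operators applied to cylinder indicators, combined with the explicit construction of $\psi$ from Sarig's and Shwartz's work. I would fix $\ul R,\ul S\in\Tigma_L$ with $d(\ul R,\ul S)\leq e^{-N}$, so that $R_i=S_i$ for $-N\leq i\leq 0$. For each $n\geq 1$, every preimage of $\ul R$ under $\tigma^n$ is uniquely determined by an admissible extension $\mathbf{a}=(T_{-(n-1)},\ldots,T_0)$ starting from $R_0$; since $R_0=S_0$, the same set of $\mathbf{a}$'s parametrises preimages of $\ul S$, yielding a canonical bijection $\mathbf{a}\mapsto(\ul T(\mathbf{a}),\ul T'(\mathbf{a}))$. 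The two paired chains agree at all positions $-(N+n)\leq i\leq 0$, so $d(\tigma^k\ul T(\mathbf{a}),\tigma^k\ul T'(\mathbf{a}))\leq e^{-(N+n+1-k)}\leq e^{-1}$ for every $0\leq k\leq n-1$.

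Using the weak H\"older property of $\phi$ (with constants $C,\alpha$) and summing along the orbit gives, uniformly in $n$ and $\mathbf{a}$,
$$|\phi_n(\ul T(\mathbf{a}))-\phi_n(\ul T'(\mathbf{a}))|\leq C\sum_{k=0}^{n-1}e^{-\alpha(N+n+1-k)}\leq C_1 e^{-\alpha N},$$
where $\phi_n:=\sum_{k=0}^{n-1}\phi\circ\tigma^k$ and $C_1=Ce^{-2\alpha}/(1-e^{-\alpha})$. The clean point is that for a one-cylinder $A=[a^*]$ one has $\mathbb{1}_A(\ul T(\mathbf{a}))=\mathbb{1}_A(\ul T'(\mathbf{a}))$ (both equal $\mathbb{1}_{\{T_0=a^*\}}$), so the indicator passes through the bijection and
$$\frac{L_\phi^n\mathbb{1}_{[a^*]}(\ul R)}{L_\phi^n\mathbb{1}_{[a^*]}(\ul S)}=\frac{\sum_{\mathbf{a}:\,T_0=a^*}e^{\phi_n(\ul T(\mathbf{a}))}}{\sum_{\mathbf{a}:\,T_0=a^*}e^{\phi_n(\ul T'(\mathbf{a}))}}=e^{\pm C_1 e^{-\alpha N}},$$
with no dependence on $n$.

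By Sarig's generalised Ruelle--Perron--Frobenius theorem (recurrent case) or Shwartz's construction (transient case), $\psi$ is obtained as a normalised limit of $e^{-nP_G(\phi)}L_\phi^n\mathbb{1}_{[a^*]}$ along an appropriate subsequence, so passing to the limit in the ratio identity produces $\psi(\ul R)/\psi(\ul S)=e^{\pm C_1 e^{-\alpha N}}$, i.e.\ $|\log\psi(\ul R)-\log\psi(\ul S)|\leq C_1 e^{-\alpha N}$, the required weakly H\"older bound. The main obstacle is extending this estimate from the specifically constructed $\psi$ to an arbitrary positive $\phi$-harmonic function: in the recurrent case Sarig's uniqueness up to scaling settles this immediately, but in the transient case the Martin boundary for $L_\phi$ can be non-trivial, so I would write a general $\psi$ as a Martin integral $\psi(\ul R)=\int k(\ul R,\eta)\,d\mu(\eta)$ of extremal positive $\phi$-harmonic functions $k(\cdot,\eta)$. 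Each extremal $k(\cdot,\eta)$ satisfies the same estimate by the argument above, and the uniformity of the bound in $\eta$ lets one pull the factor $e^{\pm C_1 e^{-\alpha N}}$ out of the integral to conclude the estimate for every positive $\phi$-harmonic $\psi$.
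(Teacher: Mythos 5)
Your core estimate is correct and well executed: pairing preimages of $\ul R$ and $\ul S$ by their common extension $\mathbf a$, observing that $\mathbb{1}_{[a^*]}$ sees only the top coordinate $T_0$ (so it passes through the bijection unchanged), and then using the weak H\"older property of $\phi$ along the orbit of the paired preimages to show $L_\phi^n\mathbb{1}_{[a^*]}(\ul R)/L_\phi^n\mathbb{1}_{[a^*]}(\ul S)=e^{\pm C_1 d(\ul R,\ul S)^\alpha}$ uniformly in $n$ and $a^*$ --- this is the bounded-distortion / Gibbs mechanism, and your constant $C_1$ comes out right (up to the harmless off-by-one in translating $d(\ul R,\ul S)\le e^{-N}$ into an agreement window, which only rescales $C_1$ by $e^\alpha$). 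The recurrent case then closes cleanly via Sarig's RPF convergence and uniqueness of $\psi$. The paper itself does not give an argument here at all: it simply cites \cite[Proposition~3.4]{SarigTDF} for recurrent $\phi$ and the proof of \cite[Theorem~5.7]{Shwartz} for transient $\phi$, remarking that Shwartz's uniform-continuity argument upgrades to weak H\"older continuity when $\phi$ is weakly H\"older. So your proof is a genuinely different route in the sense that it unpacks the mechanism explicitly and unifies both cases under the single ratio estimate on $L_\phi^n\mathbb{1}_{[a^*]}$, which is pleasantly self-contained through the recurrent case. The one place you are still leaning on the literature is the transient case: the Martin integral representation of an arbitrary positive $\phi$-harmonic function as $\psi(\cdot)=\int k(\cdot,\eta)\,d\mu(\eta)$ over extremals, and the identification of extremals as limits of Green-kernel ratios $G_\lambda(\cdot,a_j)/G_\lambda(\ul o,a_j)$ (to which your uniform bound then transfers because $G_\lambda$ is a positive-coefficient sum of the $L_\phi^n\mathbb{1}$), is itself the nontrivial content of Shwartz's work that the paper cites. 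So you have correctly identified the right structure and shown exactly how the estimate propagates through it, but a fully rigorous version of the transient case would still need to invoke Shwartz's Martin representation theorem rather than derive it. Flagging that dependence explicitly would make the proposal watertight; the estimate itself and its propagation are sound.
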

In the recurrent case, Sarig proves it in \cite[Proposition~3.4]{SarigTDF}. In the transient case, it follows from the proof of Theorem 5.7 in \cite{Shwartz}. Shwartz shows in the proof that $\log\psi$ is uniformly continuous. A close inspection of his argument tells that in fact, if $\phi$ is weakly H\"older continuous, then so is $\log\psi$.

\subsection{Recurrence and Periodic Orbits}

\begin{lemma}\label{periodicRecurrence}
Let $\varphi: \HWT_\chi\rightarrow\mathbb{R}$ be a 
$\cont$ potential, and let $\phi:\Sig_L\rightarrow \mathbb{R}$ be a corresponding weakly H\"older continuous one-sided potential given by Theorem \ref{SinaiBowen} and the remark following it. Let $H_\chi(p)$ be an ergodic homoclinic class of a periodic and $\chi$-hyperbolic point $p$, and let $\Tigma$ be a maximal irreducible component of $\Sig$ s.t $\wpi[\Tigma^\#]= H_\chi(p)$ modulo conservative measures (see Proposition \ref{homoclinicirreducible}). Then $\phi:\Tigma_L\rightarrow\mathbb{R}$ is recurrent if and only if 
$$\sum_{n\geq1}\sum_{\substack{f^n(q)=q,\\q\in H_\chi(p)\cap \Lambda_l}}e^{\varphi_n(q)-n\cdot P_{H_\chi(p)}(\varphi)}=\infty,$$
where $\Lambda_l$ is a level set s.t $p\in \Lambda_{l\cdot e^{-2\sqrt[3]{\epsilon}}}$ and $\varphi_n(q)=\sum_{k=0}^{n-1}\varphi(f^{-k}(q))$.
\end{lemma}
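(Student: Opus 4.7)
\medskip

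\noindent\textbf{Proof plan.} The strategy is to identify the Gurevich partition functions $Z_n(\phi,R)$ with the periodic orbit sums $\sum_{f^n q = q} e^{\varphi_n(q)}$ via the coding $\wpi$, up to a multiplicative constant controlled by the level set $\Lambda_l$. The argument proceeds in four steps.

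\medskip

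\noindent First, I would establish the dictionary between $\phi$ and $\varphi$. By Theorem \ref{SinaiBowen}, $\phi=\varphi^{*}\circ\wpi$ with $\varphi^{*}=\varphi+A-A\circ f^{-1}$ and $\wh{A}=A\circ\wpi$ bounded, so $P_G(\phi)=P_G(\varphi^{*}\circ\wpi)=P_G(\wh{\varphi})$. Applying the variational principle for the Gurevich pressure to the irreducible component $\Tigma$ and using that $\wpi[\Tigma^{\#}]=H_\chi(p)$ modulo conservative measures (Proposition \ref{homoclinicirreducible}) yields $P_G(\phi)=P_{H_\chi(p)}(\varphi)$. For any periodic $\ul{S}\in\Tigma_L$ with $\tigma^{n}\ul{S}=\ul{S}$, extend to its two-sided $n$-periodic representative $\tilde{\ul{S}}\in\Tigma^{\#}$ and set $q=\wpi(\tilde{\ul{S}})$; since $\tigma^{k}$ on $\Tigma_L$ corresponds to $f^{-k}$ on $M$ along $\wpi$, one gets $\phi_n(\ul{S})=\sum_{k=0}^{n-1}\varphi^{*}(f^{-k}q)=\varphi_n(q)+A(q)-A(f^{-n}q)=\varphi_n(q)$, the last equality using $f^{n}q=q$.

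\medskip

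\noindent Second, the easy direction. Assume the periodic orbit sum diverges. For every hyperbolic periodic $q\in H_\chi(p)\cap\Lambda_l$ with $f^{n}q=q$, the canonical coding $\ul{R}(q)\in\Sig^{\circ}$ (Definition \ref{canonico}) lies in $\Tigma$ (its symbols belong to the $\sim$-equivalence class of any $R\in\langle R\rangle$ representing $H_\chi(p)$), its negative half $\ul{R}(q)^{-}\in\Tigma_L$ is $n$-periodic under $\tigma$, and the symbol $R(q)$ is in $\langle R\rangle$. Grouping periodic points by the partition element that contains them,
\[
\sum_{R'\in\langle R\rangle}Z_n(\phi,R')\ \ge\ \sum_{\substack{f^{n}q=q,\\ q\in H_\chi(p)\cap\Lambda_l}} e^{\varphi_n(q)}.
\]
Summing the normalized version against $e^{-nP_G(\phi)}$ and invoking divergence of the right-hand side produces some $R'\in\langle R\rangle$ for which $\sum_n e^{-nP_G(\phi)}Z_n(\phi,R')=\infty$; by \cite[Corollary~3.1]{SarigTDF} this is equivalent to recurrence of $\phi$ on $\Tigma_L$.

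\medskip

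\noindent Third, the hard direction. Suppose $\phi$ is recurrent, and pick $R=R(p)\in\mathcal{R}$, so $R\subseteq Z(u_{0})$ for some Pesin-chart vertex $u_{0}\in\mathcal V$. The hypothesis $p\in\Lambda_{l\cdot e^{-2\sqrt[3]{\epsilon}}}$ pins down a lower bound on the chart parameters of $u_{0}$ (via Definition \ref{littleQ} and the $\epsilon$-tempered kernel condition), which in turn forces every $\chi$-summable point $q\in\overline{R}$ to admit an $\epsilon$-tempered kernel of size $\ge 1/l$ along its forward and backward iterates: the factor $e^{-2\sqrt[3]{\epsilon}}$ precisely absorbs the multiplicative slack in $q\circ f/q=e^{\pm\epsilon}$ accumulated while comparing $p$ and $q$ through the common chart $u_{0}$, and ensures $q\in\Lambda_{l}$. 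So every periodic $\ul{S}\in\Tigma_L$ counted by $Z_n(\phi,R)$ projects to some $q\in H_\chi(p)\cap\Lambda_l\cap\overline{R}$ with $\phi_n(\ul{S})=\varphi_n(q)$. The number of such $\ul{S}$ mapping to a fixed $q$ is bounded uniformly by some $M(l)<\infty$, because each symbol $S_i$ must come from a partition element affiliated (in the sense of Definition \ref{sigmasharp}(2)) to the Pesin charts covering the orbit of $q$, and by Claim \ref{localfinito} together with Definition \ref{Doomsday}(2)(b)\,(and Definition \ref{N_R}) this affiliation class is finite, with cardinality controlled by $l$. Hence
\[
Z_n(\phi,R)\ \le\ M(l)\sum_{\substack{f^{n}q=q,\\ q\in H_\chi(p)\cap\Lambda_l}} e^{\varphi_n(q)},
\]
and divergence of the left-hand Gurevich sum forces divergence of the right-hand periodic sum.

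\medskip

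\noindent The main obstacle will be the technical claim in the third step: that the choice $p\in\Lambda_{l\cdot e^{-2\sqrt[3]{\epsilon}}}$ forces every periodic $q\in\overline{R(p)}\cap H_\chi(p)$ to lie in $\Lambda_{l}$ with a uniformly bounded number of symbolic preimages. This requires propagating the tempered-kernel quality of $p$ to any co-chart point $q$, carefully tracking how $\|C_{\chi}^{-1}(\cdot)\|$ and $Q_\epsilon(\cdot)$ vary inside a single Pesin chart; the subtle cube-root exponent $\sqrt[3]{\epsilon}$ comes from the $\epsilon/3$ discretization of $Q_\epsilon$ and from pulling the tempered kernel across a finite number of iterates in the same chart. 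All other steps are essentially bookkeeping once the symbolic cohomology and the two-sided/one-sided identification of periodic chains are in place.
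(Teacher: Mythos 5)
The plan correctly identifies the dictionary $\phi\leftrightarrow\varphi$, the cohomology relation, $P_G(\phi)=P_{H_\chi(p)}(\varphi)$, and the right high-level idea for the ``only if'' step (propagating tempered-kernel quality through a common chart, with multiplicity controlled by affiliation). However, the ``if'' direction as written contains a genuine gap. You map each periodic $q$ to its canonical coding $\ul{R}(q)$ and obtain
\[
\sum_{R'\in\langle R\rangle}Z_n(\phi,R')\ \ge\ \sum_{\substack{f^{n}q=q,\\ q\in H_\chi(p)\cap\Lambda_l}} e^{\varphi_n(q)},
\]
and then assert that divergence of $\sum_{n}e^{-nP_G(\phi)}\sum_{R'\in\langle R\rangle}Z_n(\phi,R')$ produces a single $R'$ with $\sum_n e^{-nP_G(\phi)}Z_n(\phi,R')=\infty$. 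That inference is invalid: $\langle R\rangle$, the alphabet of a maximal irreducible component of a countable Markov shift, is in general countably infinite, and a divergent sum over a countably infinite index can have every term finite. Since recurrence of $\phi$ requires divergence at a \emph{single} symbol, the argument as given does not establish it. The paper repairs exactly this by showing that the zeroth symbol of the periodic coding of each $q\in H_\chi(p)\cap\Lambda_l$ belongs to a \emph{finite} family $\mathcal{R}_l\subseteq\mathcal{R}$ depending only on $l$ (citing a claim from \cite{SRBleaf}), so the pigeonhole runs over $|\mathcal{R}_l|^2<\infty$ pairs.

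Two smaller issues compound this. First, you use the canonical coding $\ul{R}(q)$ and assert it lies in $\Tigma$; Proposition \ref{homoclinicirreducible} only guarantees \emph{some} coding of $q$ lies in $\Tigma^\#$, and there is no reason the canonical one belongs to the chosen irreducible component. Second, for an arbitrary periodic coding $\ul{S}^q\in\Tigma^\#$ of an $n$-periodic point $q$, the $\sigma$-period may be a proper multiple of $n$, so $S^q_0\neq S^q_n$ is possible; after pigeonholing one then still does not have a closed loop contributing to a Gurevich partition function. The paper handles this with an explicit concatenation argument (joining $b=S^q_{n_k}$ back to $a=S^q_0$ by a fixed admissible word of length $m$, turning the pigeonholed chains into genuine loops of length $n_k+m$ at the symbol $a$). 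Either you must justify $\ul{R}(q)\in\Tigma$ (which yields $n$-periodicity directly), or you must incorporate a closing-up step. As it stands, the ``if'' direction needs both the finite-alphabet reduction and one of these two fixes before you can invoke \cite[Corollary~3.1]{SarigTDF}.
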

\begin{proof}
	We start with the ``if" part. Let $q\in H_\chi(p)\cap \Lambda_l$ be a periodic $\chi$-hyperbolic point of period $n$. By Proposition \ref{homoclinicirreducible}, $q$ has a (periodic\footnote{Let $\underline{R}\in \Sig^\#\cap\wpi^{-1}[\{q\}]$. Then for any $m\geq0$, $\wpi(\sigma^{m\cdot n}\underline{R})=q$, whence by the finiteness of $\Sig^\#\cap\wpi^{-1}[\{q\}]$, $\exists m_1,m_2
	\in \mathbb{N}$ s.t $\sigma^{m_1\cdot n}\underline{R}=\sigma^{m_2\cdot n}\underline{R}$, and so $\sigma^{m\cdot n}\underline{R}=\underline{R}$ with 
	$m=\max\{m_1,m_2\}-\min\{m_1,m_2\}$.}) coding in $\Tigma$. Let $\ul{S}^q$ be such a periodic coding
	.
	
By \cite[Claim~7.6]{SRBleaf}, there is a finite collection of symbols $\mathcal{R}_l\subseteq \mathcal{R}$, which depends only on $l$, and which contains $S_0^q$.

By \cite[Theorem~1.3]{SBO} there exists $C_l=\max\{N(R)^2:R\in \mathcal{R}_l\}\in \mathbb{N}$ s.t $|\wpi^{-1}[\{q\}]\cap \Tigma^\#|\leq C_l$ (see Definition \ref{N_R} for the definition of $N(R)$). In addition, by Theorem \ref{SinaiBowen}, $\exists C:=2\|A\|_\infty<\infty$ s.t 
\begin{equation}\label{refereeAsked}
\phi_n(\ul{S}^q)=\pm C+\varphi_n(q),\text{ } 
\forall n\in\mathbb{N}.
\end{equation} 

Since $\varphi$ is bounded (recall remark after Definition \ref{MnfldWeakHolder}), so is $\phi$. By the spectral decomposition (see \cite[Theorem~2.5]{SarigTDFSymposium}), we may assume w.l.o.g that $(\Tigma,\sigma)$ is topologically mixing (otherwise apply the proof to each mixing component of a suitable period of $\sigma_R$). By the variational principle (see \cite[Theorem~4.4]{SarigTDF}, and \cite{VPunbdd} when the potential is not bounded), the Gurevich pressure of $\phi $ on $\Tigma_L$, $P_G(\phi)$, satisfies $$P_G(\phi)=\sup\{h_{\nu}(\tigma)+\int_{\Tigma_L} \phi d\nu:\nu\text{ is an invariant probability measure on }\Tigma_L\}.$$ Every invariant probability measure on $\Tigma_L$, $\nu$, extends uniquely to an invariant probability measure on $\Tigma$, $\nu'$, s.t $\nu'\circ \tau^{-1}=\nu$ and $h_{\nu}(\tigma)= h_{\nu'}(\sigma)$, where $\tau:\Tigma\rightarrow\Tigma_L$ is the projection onto the non-positive coordinates. Thus, $$P_G(\phi)=\sup\{h_{\nu'}(\sigma)+\int_{\Tigma} \phi d\nu':\nu'\text{ is an invariant probability measure on }\Tigma\}.$$ In addition, every invariant probability measure on $H_\chi(p)$, $\mu$, lifts to a an invariant probability measure on $\Tigma$, $\mu'$, s.t $\mu'\circ\wpi^{-1}=\mu$ and $h_{\mu}(f)= h_{\mu'}(\sigma)$. Then, we get 

\begin{equation}\label{underpressure2013}
P_G(\phi)=P_{H_\chi(p)}(\varphi).	
\end{equation}
By the Ruelle inequality the topological entropy of $f$ is finite, and since $\varphi$ is bounded, $P_{H_\chi(p)}(\varphi)<\infty$.

For every $n\in\mathbb{N}$ $\exists a_n,b_n\in\mathcal{R}_l$ s.t

\begin{align*}
	\sum_{\substack{f^n(q)=q,q\in H_\chi(p)\cap \Lambda_l\\ \ul{S}^q\in\wpi^{-1}[\{q\}]\cap \Tigma^\# ,S^q_0=a_n,S^q_n={b_n}}}e^{\phi_n(\ul{S}^q)}\geq \frac{1}{|\mathcal{R}_l|^2}\cdot \sum_{\substack{f^n(q)=q,q\in H_\chi(p)\cap \Lambda_l \\ \ul{S}^q\in\wpi^{-1}[\{q\}]\cap \Tigma^\#}}e^{\phi_n(\ul{S}^q)}.
\end{align*}
 
For every $n\geq1$, $\forall q\in H_\chi(p)\cap \Lambda_l$ s.t $f^n(q)=q$,  $|\wpi^{-1}[\{q\}]\cap \Tigma^\#|
\leq C_l$. By assumption, $\infty=\sum\limits_{n\geq1}\sum\limits_{f^n(q)=q,q\in H_\chi(p)\cap \Lambda_l}e^{\varphi_n(q)-n\cdot P_{H_\chi(p)}(\varphi)}$. Recall \eqref{refereeAsked}, then by the pigeonhole principle $\exists a,b\in\mathcal{R}_l$ and $n_k\uparrow\infty$ s.t $a_{n_k}=a$ and $b_{n_k}=b$, and so that the following sum is infinite: 
\begin{align*}
	\infty=\sum_{k\geq0 }\sum_{\substack{f^{n_k}(q)=q,q\in H_\chi(p)\cap \Lambda_l \\\ul{S}^q\in\wpi^{-1}[\{q\}]\cap \Tigma^\# ,S^q_0=a,S^q_{n_k}=b
	}}e^{\phi_{n_k}(S^q)-n_k\cdot P_{G}(\phi)}.
\end{align*}

Let $C_H'>0,\gamma\in(0,1)$ be the H\"older constant and H\"older exponent 
 of $\phi$, respectively. Let $C_H:=C_H'\cdot\sum_{n\geq0}\gamma^n<\infty$.

Let $\ul{W}$ be an admissible word s.t $W_0=b$ and $W_{m-1}=a$,   where $m:=|\ul{W}|$. For every finite word $\ul{W}'$ s.t $W'_0=W'_{|\ul{W}'|-1}$, let $\ul{S}^{\ul{W}'} $ denote the periodic extension of $\ul{W}'$ to a bi-infinite chain (in $\Tigma^\#$). 

For every $n\geq0$, for every finite word $\ul{W}'$ s.t $|\ul{W}'|=n$, every extension of $\ul{W}'$ to $\Tigma^\#$ which codes a periodic point in $H_\chi(p)\cap\Lambda_l$ of period $n$, must code the same point, by the shadowing lemma (see \cite[Proposition~3.12(4)]{SBO}). Therefore 

\begin{align}\label{MKT}
	\infty=&
	\sum_{k\geq0}\sum_{\substack{f^{n_k}(q)=q,q\in H_\chi(p)\cap \Lambda_l \\ \ul{S}^q\in\wpi^{-1}[\{q\}]\cap \Tigma^\# ,S^q_0=a,S^q_{n_k}=b}}e^{\phi_{n_k}(\ul{S}^q)-n_k\cdot P_G(\phi)}\nonumber\\
	\leq&  C_l e^{C_H'\sum_{j=1}^{n_k}\gamma^j}
	 \sum_{k\geq0} \sum_{\overset{}{ \wt{\ul{W}}:|\wt{\ul{W}}|=n_k, \wt{W}_0=a,\wt{W}_{n_k-1}=b}}e^{\phi_{n_k}(\ul{S} ^{( \wt{\ul{W}}\cdot\ul{W})}) -n_k\cdot P_G(\phi)}\nonumber\\
	\leq& C_l
	e^{m\cdot(\|\varphi\|_\infty+P_G(\phi))+C_H}\cdot \sum_{k\geq0}\sum_{\ul{S}\in\Tigma: \sigma^{n_k+m}\ul{S}=\ul{S},S_0=a} 
	e^{\phi_{n_k+m}(\ul{S}) -(n_k+m)\cdot P_G(\phi)}\nonumber\\
	\leq& C_l
	e^{m\cdot(\|\varphi\|_\infty+P_G(\phi))+C_H}\cdot\sum_{n\geq 1} \sum_{\ul{S}\in\Tigma: \sigma^{n}\ul{S}=\ul{S},S_0=a} e^{\phi_{n}(\ul{S}) -n\cdot P_G(\phi)}.\nonumber
\end{align}
Therefore $\phi$ is recurrent on $\Tigma$. 

It remains to show the ``only if" part. Assume that $\phi$ is recurrent on $\Tigma$, then $\forall a\in \mathcal{R}_l$, $$\sum\limits_{n\geq 1} \sum\limits_{\overset{\ul{S}\in\Tigma:}{ \sigma^{n}\ul{S}=\ul{S},S_0=a}} e^{\phi_{n}(\ul{S}) -n\cdot P_G(\phi)}=\infty.$$  Choose $a\in\mathcal{R}_l$ s.t $[a]\cap \wpi^{-1}[\{p\}]\cap\Tigma^\#\neq\varnothing$. $\forall n\geq 1$, every periodic chain $\ul{S}\in \Tigma\cap [a]$ of period $n$, codes a periodic point of period $n$ in $H_\chi(p)\cap \Lambda_l$ (see \cite[Theorem~4.13]{SBO}). There can be no more than $C_l$ chains which code the same point, thus $$\infty=\sum_{n\geq 1} \sum_{\ul{S}\in\Tigma: \sigma^{n}\ul{S}=\ul{S},S_0=a} e^{\phi_{n}(\ul{S}) -n\cdot P_G(\phi)}\leq C_l e^Ce^{C_H}\sum_{n\geq 1} \sum_{f^n(q)=q,q\in H_\chi(p)\cap \Lambda_l} e^{\varphi_{n}(q) -n\cdot P_{H_\chi(p)}(\varphi)}.$$
\end{proof}

\section{Absolutely Continuous Invariant Family of Leaf Measures}
 The following theorem is a version of the Ledrappier theorem (\cite{Led74},\cite[Theorem~3.3]{LLS}). We give a new proof here which is good for our needs, while the previous proofs provide a formula for the measures of cylinders.
  
\begin{theorem}\label{gauraluilui}
Let $\phi:\Tigma_L\rightarrow\mathbb{R}$ be a weakly H\"older continuous potential s.t $P_G(\phi)<\infty$, where $\Tigma_L$ is an irreducible locally compact one-sided topological Markov shift.
Let $\psi$ be a $\phi$-harmonic function on $\Tigma_L$. Then, $\exists$ a family of probability measures on $\Tigma$, $\{\wh{p}_{\ul{R}}\}_{\ul{R}\in \Tigma_L}$, s.t $\forall \ul{R}\in \Tigma_L$, $\wh{p}_{\ul{R}}$ is carried by $
	\{\ul{v}\in \Tigma:v_i=R_i,\forall i\leq0\}$ and $\wh{p}_{\ul{R}}\circ \sigma^{-1}=\sum_{\tigma\ul{S}=\ul{R}}e^{\phi(\ul{S})+\log\psi(\ul{S})-\log\psi\circ\tigma(\ul{S})-P_G(\phi)} \wh{p}_{\ul{S}}$, where $\tigma:=\sigma_R|_{\Tigma_L}$.
\end{theorem}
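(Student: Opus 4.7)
The plan is to exploit the fact that $\phi$-harmonicity of $\psi$ turns the weighted preimages of $\tigma$ into a stochastic transition kernel, and then to realize each $\wh{p}_{\ul{R}}$ as the law of a one-sided Markov chain that extends $\ul{R}$ to the right, grafted onto the fixed past $\ul{R}$.

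First I would introduce the normalized potential
\[
\phi'(\ul{S}) := \phi(\ul{S}) - P_G(\phi) + \log\psi(\ul{S}) - \log\psi(\tigma \ul{S}),
\]
and observe that dividing the identity $L_\phi\psi = e^{P_G(\phi)}\psi$ by $e^{P_G(\phi)}\psi(\ul{R})$ yields $\sum_{\tigma\ul{S} = \ul{R}} e^{\phi'(\ul{S})} = 1$ for every $\ul{R}\in\Tigma_L$. The sum is finite by local finiteness of the graph, and by Lemma \ref{logHarmonicHolder} $\phi'$ is weakly H\"older continuous. Thus $g(\ul{S}) := e^{\phi'(\ul{S})}$ is an honest stochastic kernel along preimages of $\tigma$.

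Next, for each $\ul{R}\in\Tigma_L$, I would specify $\wh{p}_{\ul{R}}$ through its values on cylinders inside the fiber $F_{\ul{R}} := \{\ul{v}\in\Tigma : v_i = R_i\ \forall i\leq 0\}$. Given an admissible word $R_0 \to T_1 \to \dotsb \to T_n$, let $\ul{S}^{(k)}$ be $\ul{R}$ extended on the right by $T_1,\dots,T_k$ (so $\tigma\ul{S}^{(k)} = \ul{S}^{(k-1)}$), and set the cylinder value to $\prod_{k=1}^{n} g(\ul{S}^{(k)})$. The Kolmogorov consistency requirement --- that summing over admissible $T_n$ recovers the shorter cylinder value --- is exactly the normalization $\sum_{\tigma\ul{S}^{(n)}=\ul{S}^{(n-1)}} g(\ul{S}^{(n)}) = 1$. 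Kolmogorov's extension theorem then produces a unique Borel probability measure $\wh{p}_{\ul{R}}$ on $\Tigma$ realizing these cylinder values, and it is automatically carried by $F_{\ul{R}}$.

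Finally I would verify the push-forward relation on cylinders. For a cylinder $C$ inside $F_{\ul{S}}$ of forward length $m$ with $\tigma\ul{S}=\ul{R}$, a direct index-shift computation shows that $\sigma^{-1}(C)$ is a cylinder inside $F_{\ul{R}}$ of length $m+1$ whose first new symbol is forced to be $S_0$; applying the cylinder formula to both sides produces exactly the extra factor $g(\ul{S})$ that appears on the right, while all other $\ul{S}'\in\tigma^{-1}\{\ul{R}\}$ (which differ in the $0$-coordinate) contribute zero. A monotone-class argument then extends the identity to all Borel sets. I expect the main obstacle to be purely the bookkeeping in this last step --- keeping straight how the index shift of $\sigma$ on $\Tigma$ interacts with the fibers $F_{\ul{R}}$ and with the one-symbol extensions parametrizing $\tigma^{-1}\{\ul{R}\}$; everything else (positivity, local finiteness, H\"older regularity) is automatic from the harmonic hypothesis on $\psi$ and the weak H\"older continuity of $\phi$.
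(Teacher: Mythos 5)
Your proof is correct, and it matches the paper's computation exactly at the level of cylinder values: your $\phi'$ is the paper's $\wt\phi$, the normalization $\sum_{\tigma\ul{S}=\ul{R}}e^{\phi'(\ul{S})}=1$ is the paper's observation $L_{\wt\phi}1=1$, and the product $\prod_{k=1}^n g(\ul{S}^{(k)})$ you assign to an $n$-cylinder in the fiber $F_{\ul{R}}$ is precisely $e^{\wt{\phi}_n(\ul{S})}$, which is the stabilized cylinder value the paper derives in equation \eqref{draje}. The verification of the push-forward identity via an index shift is likewise the same bookkeeping the paper performs in its Step~2.

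The genuine difference is the construction mechanism: you invoke Kolmogorov/Carath\'eodory extension from the consistent cylinder function, whereas the paper builds $\wh{p}_{\ul{R}}$ as a weak-$*$ limit of the empirical-type measures $\wh{p}_{\ul{R}}^{(n)}=\sum_{\tigma^n\ul{S}=\ul{R}}e^{\wt{\phi}_n(\ul{S})}\delta_{\sigma^{-n}\ul{S}^\pm}$, using compactness of the fiber $F_{\ul{R}}$ to extract a convergent subsequence and then showing cylinder values stabilize so the full sequence converges. The paper explicitly acknowledges your route as the more direct one in its footnote, but deliberately keeps the weak-$*$ version because the discrete approximants $\wh{p}_{\ul{R}}^{(n)}$ are reused verbatim in the proof of the absolute-continuity statement (Corollary~\ref{gauraluilui2}): there one compares $\wh{p}_{\ul{R}}(g\circ\wh{\Gamma}_{\ul{R}\,\wt{\ul{R}}})$ with $\wh{p}_{\wt{\ul{R}}}(g)$ by matching the finite sums term-by-term (equations \eqref{forVaughnFirst}--\eqref{forVaughn}) and using H\"older continuity of $\wt\phi$ to bound the ratio of summands. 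So your extension-theorem construction is cleaner in isolation, but it trades away the scaffolding that makes the holonomy estimate a one-line comparison; if you went your route you would still need to re-derive an approximation by finite sums, or otherwise redo the holonomy comparison directly from the Markov-chain transition densities.
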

\begin{proof}
Fix $[R]\subseteq\Tigma_L$, and let $\ul{R}\in 
\Tigma_L\cap[R]$. Let $\tau:\Sig\rightarrow \Sig_L$ be the projection to the non-positive coordinates. Write $\wt{\phi}:=\phi+\log\psi-\log\psi\circ\tigma-P_G(\phi)$, and $\wt{\phi}_n:=\sum_{k=0}^{n-1}\wt{\phi}\circ\tigma^k$. It follows that $L_{\wt{\phi}}1=1$. Define,
 \begin{equation}\label{greendoor2}\wh{p}_{\ul{R}}^{(n)}:=\sum_{\tigma^n\ul{S}=\ul{R}}e^{\wt{\phi}_n(\ul{S})}\delta_{\sigma^{-n}\ul{S}^\pm},
 \end{equation}
 where $\delta_{\ul{S}^{\pm}}$ is a Dirac measure with mass at the chain $\ul{S}^{\pm}$, and $\ul{S}^{\pm}$ is some (any) choice of a chain in $\tau^{-1}[\{\ul{S}\}]\cap \Tigma$. We will show that the limit $\wh{p}_{\ul{R}}^{(n)}\xrightarrow[n\rightarrow\infty]{\text{weak-}*}\wh{p}_{\ul{R}}$ exists, and is independent of the choice of the representatives $\ul{S}^\pm\in \tau^{-1}[\{\ul{S}\}]\cap\Tigma$.

Step 1: $\forall n\geq0$, $\wh{p}_{\ul{R}}^{(n)}(1)=(L_{\wt{\phi}}^n1)(\ul{R})= 
1$. $\{\widetilde{\ul{R}}\in 
\Tigma\cap[R]: \widetilde{R}_i=R_i\forall i\leq 0\}$ is a compact set which carries $\wh{p}_{\ul{R}}^{(n)}$, $\forall n\geq 1$. Thus, $\exists n_k\uparrow\infty$ and a probability $\wh{p}_{\ul{R}}$ on 
$\{\widetilde{\ul{R}}\in 
\Tigma\cap[R]: \widetilde{R}_i=R_i,\forall i\leq 0\}$ s.t $\wh{p}_{\ul{R}}^{(n_k)}\xrightarrow[k\rightarrow\infty]{}\wh{p}_{\ul{R}} $.

Step 2: Let $[\ul{w}]_a:=\sigma^{a}[w_a,w_{a+1},...,w_{b-1},w_b]$, where $a,b\in\mathbb{Z}$ and $a\leq b$
. Let $n,m\geq0$. Observe:
\begin{enumerate}
	\item If $b\leq0$, $\wh{p}_{\ul{R}}^{(n)}(\mathbb{1}_{[\ul{w}]_a})=1$ if $\ul{R}\in[\ul{w}]_a$, and $\wh{p}_{\ul{R}}^{(n)}(\mathbb{1}_{[\ul{w}]_a})=0$ otherwise.
	\item If $a\leq 0\leq b$, $\wh{p}_{\ul{R}}^{(n)}(\mathbb{1}_{[\ul{w}]_a})= \mathbb{1}_{\sigma^a[w_a,\ldots,w_0]}(\underline{R})\cdot \wh{p}_{\ul{R}}^{(n)}(\mathbb{1}_{[w_0,...,w_b]})$ (and in particular, this equals $0$ if $w_0\neq R$).
	\item Assume $\ul{w}=R,w_1,...,w_b$, then $[\underline{w}]_a=[\underline{w}]$. If $n>b$,
	\begin{align}\label{greendoor}
		\wh{p}_{\ul{R}}^{(n+m)}(\mathbb{1}_{[\ul{w}]})=&\sum_{\tigma^m\ul{S}=\ul{R}}e^{\wt{\phi}_m(\ul{S})}\left(\sum_{\tigma^n\ul{Q}=\ul{S}}e^{\wt{\phi}_n(\ul{Q})}\delta_{\sigma^{-n-m}\ul{Q}^\pm}(\mathbb{1}_{[\ul{w}]})\right)\nonumber\\
		=& \sum_{\tigma^m\ul{S}=\ul{R}}e^{\wt{\phi}_m(\ul{S})}\left(\sum_{\tigma^n\ul{Q}=\ul{S}}e^{\wt{\phi}_n(\ul{Q})}\delta_{\sigma^{-n}\ul{Q}^\pm}(\mathbb{1}_{[\ul{w}]}\circ \sigma^{-m})\right)\nonumber\\
		=&\sum_{\tigma^m\ul{S}=\ul{R}}e^{\wt{\phi}_m(\ul{S})}\wh{p}_{\ul{S}}^{(n)}(\mathbb{1}_{[\ul{w}]}\circ \sigma^{-m})= \sum_{\tigma^m\ul{S}=\ul{R}}e^{\wt{\phi}_m(\ul{S})}\left(\wh{p}_{\ul{S}}^{(n)}\circ \sigma^m\right)(\mathbb{1}_{[\ul{w}]}).
	\end{align}
	Notice that \eqref{greendoor} holds regardless of the choice of $\ul{S}^\pm,\ul{Q}^\pm$.
\end{enumerate}

Step 3: Let $\ul{w}=(R,w_1,...,w_{b-1},w_b)$, $b\geq1$, $l \geq1$. For all $n=b+l>b$:
\begin{equation}\label{draje}
	\wh{p}^{(n)}_{\ul{R}}(\mathbb{1}_{[\ul{w}]})= \sum_{\tigma^b\ul{S}=\ul{R}}e^{\wt{\phi}_b(\ul{S})}\left(\wh{p}_{\ul{S}}^{(l)}\circ \sigma^b\right)(\mathbb{1}_{[\ul{w}]})= \sum_{\tigma^b\ul{S}=\ul{R}}e^{\wt{\phi}_b(\ul{S})}\mathbb{1}_{[\ul{w}]}(\ul{S}).
\end{equation}
This is a constant sequence for all $n> b$.\footnote{Having a consistent explicit formula for the limit measure of shifted cylinders allows us to use it as a definition and apply the Carath\'eodory extension theorem; however we choose to present the argument which uses weak-$*$ limits for future computations.} Therefore, since the cylinders (and their shfits) generate the Borel sigma algebra, and $\wh{p}_{\ul{R}}^{(n_k)}\xrightarrow[k\rightarrow\infty]{\text{weak-}*}\wh{p}_{\ul{R}}$, we get that $\wh{p}_{\ul{R}}^{(n)}\xrightarrow[n\rightarrow\infty]{\text{weak-}*}\wh{p}_{\ul{R}}$; and the limit is independent of the choice of $\ul{S}^\pm\in \tau^{-1}[\{\ul{S}\}]\cap \Tigma$ for a chain $\ul{S}\in \Tigma_L$ in \eqref{greendoor2}. In addition it follows that, $	\forall n\geq0$,
\begin{equation}\label{greendoor3}
	\wh{p}_{\ul{R}}\circ \sigma^{-n}=\sum_{\tigma^n\ul{S}=\ul{R}}e^{\wt{\phi}_n(\ul{S})}\wh{p}_{\ul{S}}.
\end{equation}

\end{proof}
This construction could a-priori depend on the choice of $\psi$; and in the case that $\phi$ is transient, $\psi$ may not be unique. We will later show applications in setups which imply the recurrence of $\phi$, and thus remove the need to choose $\psi$.

\begin{cor}[Absolute Continuity]\label{gauraluilui2}
Let $\phi:\Tigma_L\rightarrow\mathbb{R}$ be a weakly H\"older continuous potential s.t $P_G(\phi)<\infty$, where $\Tigma_L$ is an irreducible locally compact one-sided topological Markov shift.
Let $\psi$ be a $\phi$-harmonic function on $\Tigma_L$. Let $\{\wh{p}_{\ul{R}}\}_{\ul{R}\in\Tigma_L}$ be the measures constructed in 
Theorem \ref{gauraluilui}. Let $[R]\subseteq \Tigma_L$, let $\ul{R},\wt{\ul{R}}\in \Tigma_L\cap [R]$, and let $$\wh{\Gamma}_{\ul{R}\,\wt{\ul{R}}}:\{\ul{v}\in\Tigma: v_i=R_i,\forall i\leq0\}\rightarrow \{\ul{v}\in\Tigma: v_i=\wt{R}_i,\forall i\leq0\}$$ be 
the holonomy map $\ul{v}\mapsto (\wt{R}_i)_{i\leq0}\cdot (v_i)_{i\geq0}$, where $\cdot$ denotes an admissible 
concatenation. Then $\wh{\Gamma}_{\ul{R}\,\wt{\ul{R}}}$ is a continuous 
and invertible map s.t $\wh{p}_{\ul{R}}\circ \wh{\Gamma}_{\ul{R}\,\wt{\ul{R}}}^{-1}\sim\wh{p}_{\wt{\ul{R}}}$. 

\end{cor}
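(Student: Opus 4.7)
The plan is to compute both $\wh{p}_{\ul{R}}$ and $\wh{p}_{\wt{\ul{R}}}$ explicitly on cylinders of the form $[R, w_1, \ldots, w_b]$ using the formula from Step~3 of the proof of Theorem~\ref{gauraluilui}, exhibit a uniform two-sided bound on the ratio $(\wh{p}_{\ul{R}}\circ \wh{\Gamma}_{\ul{R}\,\wt{\ul{R}}}^{-1})/\wh{p}_{\wt{\ul{R}}}$ along these cylinders (using weak Hölder continuity of $\wt{\phi} := \phi + \log\psi - \log\psi\circ\tigma - P_G(\phi)$), and transfer the bound from the cylinder base to all Borel sets via outer regularity.

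Continuity and invertibility of $\wh{\Gamma}_{\ul{R}\,\wt{\ul{R}}}$ are immediate from the definition, with inverse $\wh{\Gamma}_{\wt{\ul{R}}\,\ul{R}}$. Write $X_{\ul{R}} := \{\ul{v}\in\Tigma : v_i = R_i,\ \forall i\leq 0\}$ and $X_{\wt{\ul{R}}}$ analogously. Since $\wh{\Gamma}$ preserves positive coordinates and merely relabels the negative tail, for every cylinder $[\ul{w}] = [w_0,\ldots,w_b]$ with $w_0 = R$ one has $\wh{\Gamma}^{-1}([\ul{w}]\cap X_{\wt{\ul{R}}}) = [\ul{w}]\cap X_{\ul{R}}$, so $(\wh{p}_{\ul{R}}\circ \wh{\Gamma}_{\ul{R}\,\wt{\ul{R}}}^{-1})([\ul{w}]) = \wh{p}_{\ul{R}}([\ul{w}])$. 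By Step~3 of the proof of Theorem~\ref{gauraluilui},
\[
\wh{p}_{\ul{R}}([\ul{w}]) \;=\; \sum_{\tigma^b \ul{S} = \ul{R}} e^{\wt{\phi}_b(\ul{S})}\,\mathbb{1}_{[\ul{w}]}(\ul{S}) \;=\; e^{\wt{\phi}_b(\ul{S}^{\ul{w},\ul{R}})},
\]
since the joint conditions $\tigma^b \ul{S} = \ul{R}$ and $\ul{S}\in [\ul{w}]$ single out the unique $\ul{S}^{\ul{w},\ul{R}}\in\Tigma_L$ defined by $S_j = R_{j+b}$ for $j\leq -b$ and $S_{-b+i} = w_i$ for $0\leq i\leq b$. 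The analogous identity holds for $\wh{p}_{\wt{\ul{R}}}([\ul{w}])$ with the chain $\ul{S}^{\ul{w},\wt{\ul{R}}}$.

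The chains $\ul{S}^{\ul{w},\ul{R}}$ and $\ul{S}^{\ul{w},\wt{\ul{R}}}$ agree at positions $-b+1,\ldots,0$ and may differ only at positions $\leq -b$. Hence for each $0\leq k\leq b-1$, the shifts $\tigma^k \ul{S}^{\ul{w},\ul{R}}$ and $\tigma^k \ul{S}^{\ul{w},\wt{\ul{R}}}$ share their last $b-k$ coordinates, so their symbolic distance is at most $e^{-(b-k)}$. By Lemma~\ref{logHarmonicHolder} applied to $\psi$, combined with the hypothesis on $\phi$, the potential $\wt{\phi}$ is weakly Hölder continuous; let $C>0$, $\theta\in(0,1)$ be weak Hölder constants for $\wt{\phi}$. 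Summing,
\[
\bigl|\wt{\phi}_b(\ul{S}^{\ul{w},\ul{R}}) - \wt{\phi}_b(\ul{S}^{\ul{w},\wt{\ul{R}}})\bigr| \;\leq\; \sum_{k=0}^{b-1} C\,\theta^{b-k} \;\leq\; D := \tfrac{C\theta}{1-\theta},
\]
uniformly in $b\geq 1$ and in $\ul{w}$, which yields $e^{-D}\,\wh{p}_{\wt{\ul{R}}}([\ul{w}]) \leq (\wh{p}_{\ul{R}}\circ\wh{\Gamma}^{-1})([\ul{w}]) \leq e^{D}\,\wh{p}_{\wt{\ul{R}}}([\ul{w}])$ on every such cylinder.

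To finish, observe that the cylinders $[\ul{w}]\cap X_{\wt{\ul{R}}}$ with $w_0=R$ form a countable clopen base for the topology of $X_{\wt{\ul{R}}}$, so every open subset of $X_{\wt{\ul{R}}}$ is a countable disjoint union of such cylinders; countable additivity extends the two-sided bound to all open sets. Outer regularity of the two finite Borel measures $\wh{p}_{\ul{R}}\circ\wh{\Gamma}^{-1}$ and $\wh{p}_{\wt{\ul{R}}}$ on the Polish space $X_{\wt{\ul{R}}}$ then upgrades this to all Borel sets: if $\wh{p}_{\wt{\ul{R}}}(B) = 0$ then approximating $B$ by open supersets $U$ forces $(\wh{p}_{\ul{R}}\circ\wh{\Gamma}^{-1})(U)\leq e^{D}\wh{p}_{\wt{\ul{R}}}(U) \to 0$, and symmetrically. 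Hence $\wh{p}_{\ul{R}}\circ\wh{\Gamma}_{\ul{R}\,\wt{\ul{R}}}^{-1}\sim\wh{p}_{\wt{\ul{R}}}$ with Radon-Nikodym derivative in $[e^{-D}, e^{D}]$. The main technical point is the uniformity of $D$ in the cylinder length $b$, which is exactly what the geometric decay $\theta^{b-k}$ provided by weak Hölder continuity gives; Lemma~\ref{logHarmonicHolder} on $\log\psi$ is therefore the crucial input.
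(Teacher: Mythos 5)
Your proof is correct and follows essentially the same route as the paper's: the crucial inputs in both are the weak H\"older continuity of $\wt{\phi}=\phi+\log\psi-\log\psi\circ\tigma-P_G(\phi)$ (Lemma \ref{logHarmonicHolder}) and a term-by-term comparison of the preimage chains under $\tigma^j$, summed over a geometric series. The only cosmetic differences are that you work on cylinders and extend by outer regularity while the paper estimates directly against continuous test functions using the limit formula \eqref{forVaughnFirst}--\eqref{forVaughn}, and the paper records the slightly sharper bound $(\wt{C}_\phi)^{\pm\gamma^n}$ (decaying with $d(\ul{R},\wt{\ul{R}})=e^{-n}$) rather than your uniform $e^{\pm D}$, which yields the stronger remark about a uniform modulus of continuity for the holonomies.
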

\begin{proof}

Let $\psi$ be the $\phi$-harmonic function which is used in Theorem \ref{gauraluilui}, and write $\wt{\phi}:=\phi+\log\psi-\log\psi\circ\tigma-P_G(\phi)$ and $\wt{\phi}_n:=\sum_{k=0}^{n-1}\wt{\phi}\circ\tigma^k$. By Lemma \ref{logHarmonicHolder} by the local compactness of $\Tigma_L$, the weak H\"older continuity of $\phi$, implies that $\wt{\phi}$ is weakly H\"older continuous.

Let $[\ul{w}]_a:=\sigma^a[w_a,w_{a+1},...,w_{b-1},w_b]$, where $a,b\in\mathbb{Z}$ and $a\leq b$
, and write $g:=\mathbb{1}_{[\ul{w}]_a}$. Let $j \geq0$, and fix an extension $\ul{S}^{\pm,(j)}\in \tau^{-1}[\{\ul{S}\}]$ for each $\ul{S}\in \tigma^{-j}[\{\wt{\ul{R}}\}]$, where $\tau: \Tigma\rightarrow\Tigma_L$ is the projection to the non-positive coordinates. Let $\{\wh{p}_{\ul{R}}\}_{\Tigma_L}$ be the measures constructed in Theorem \ref{gauraluilui}. By steps 2 and 3 in Theorem \ref{gauraluilui}, 
\begin{equation}\label{forVaughnFirst}
\wh{p}_{\wt{\ul{R}}}(g)=\lim_{j\rightarrow\infty}\sum_{\tigma^j\ul{S}=\wt{\ul{R}}}e^{\wt{\phi}_j(\ul{S})}\delta_{\sigma^{-j}\ul{S}^{\pm,(j)}}(g).	
\end{equation}

Moreover, by step 3 in Theorem \ref{gauraluilui}, this limit is independent of the choice of $\{\ul{S}^{\pm,(j)}\}_{j\geq0}$. Thus, similarly,

\begin{equation}\label{forVaughn}
\wh{p}_{\ul{R}}(g\circ\wh{\Gamma}_{\ul{R}\,\wt{\ul{R}}})=\lim_{j\rightarrow\infty}\sum_{\tigma^j\wt{\ul{S}}=\ul{R}}e^{\wt{\phi}_j(\wt{\ul{S}})}\delta_{\wh{\Gamma}_{\ul{R}\,\wt{\ul{R}}}\left(\sigma^{-j}\ul{S}^{\pm,(j)}\right)}(g),
\end{equation}
where $\forall j\geq0$, $\forall \wt{\ul{S}}\in\tigma^{-j}[\{\ul{R}\}]$, $\tau\left(\sigma^{j}\wh{\Gamma}_{\ul{R}\,\wt{\ul{R}}}\left(\sigma^{-j}\ul{S}^{\pm,(j)}\right)\right)=\wt{\ul{S}}$.


Let $C>0,\gamma\in(0,1)$ be the H\"older constant and H\"older exponent 
 of $\wt{\phi}$, respectively. 
$\forall j\geq0$, $\forall \ul{S}\in\tigma^{-j}[\{\wt{\ul{R}}\}]$, write $\wt{\ul{S}}:= \tau\left(\sigma^{j}\wh{\Gamma}_{\ul{R}\,\wt{\ul{R}}}\left(\sigma^{-j}\ul{S}^{\pm,(j)}\right)\right)\in\tigma^{-j}[\{\ul{R}\}]$. Write $d(\ul{R},\wt{\ul{R}})=e^{-n}$, $n\geq1$. Then,
$$e^{\wt{\phi}_j(\ul{S})}=e^{\pm C\sum_{k\geq 0}\gamma^{k+n}}\cdot e^{\wt{\phi}_j(\wt{\ul{S}})}.$$
Write $\wt{C}_\phi:=\exp( C\cdot \sum_{k\geq 0}\gamma^k) \in (0,\infty)$. Then,
\begin{equation}\label{seasalt}
	\forall h\in C(\{\ul{v}\in\Tigma: v_i=\wt{R}_i,\forall i\leq0\}),\text{ } \wh{p}_{\wt{\ul{R}}}(h)=\left(\wt{C}_\phi\right)^{\pm \gamma^n}\cdot \left(\wh{p}_{\ul{R}}\circ\wh{\Gamma}_{\ul{R}\,\wt{\ul{R}}}^{-1}\right)(h).
\end{equation}
\end{proof}

\noindent\textbf{Remark:} The proof of Corollary \ref{gauraluilui} shows something a little stronger than absolute continuity, it shows that $\{\wh{p}_{\ul{R}}\}_{\ul{R}\in\Tigma_L}$ is a continuous family with a uniform modulus of continuity for the holonomies.

\begin{definition}\label{greendoor4}
	Let $\Tigma$ be a maximal irreducible component of $\Sig$, and let $\{\wh{p}_{\ul{R}}\}_{\ul{R}\in \Tigma_L}$ be the family of measures given by Theorem \ref{gauraluilui}, when $P_G(\phi|_{\Tigma_L})<\infty$. Define $\forall \ul{R}\in\Tigma_L$, $\wh{\mu}_{\ul{R}}:=\psi(\ul{R})\cdot \wh{p}_{\ul{R}}$, where $\psi$ is the $\phi$-harmonic function on $\Tigma_L$ which is fixed in Theorem \ref{gauraluilui}.
\end{definition}

\begin{cor}\label{jeo}
	The family of measures $\{\wh{\mu}_{\ul{R}}\}_{\ul{R}\in\Tigma_L}$ from Definition \ref{greendoor4} satisfies: 
	
	$\wh{\mu}_{\ul{R}}\circ \sigma^{-1}=e^{-P_G(\phi)}\sum\limits_{\tigma\ul{S}=\ul{R}}e^{\phi(\ul{S})} \wh{\mu}_{\ul{S}}$. In addition, $\{\wh{\mu}_{\ul{R}}\}_{\ul{R}\in\Tigma_L}$ is an absolutely continuous family of measures.
\end{cor}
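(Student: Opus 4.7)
The plan is to simply multiply through the transformation relation for $\{\wh{p}_{\ul{R}}\}$ from Theorem \ref{gauraluilui} by $\psi(\ul{R})$ and rearrange. Explicitly, Theorem \ref{gauraluilui} gives
\[
\wh{p}_{\ul{R}}\circ\sigma^{-1}=\sum_{\tigma\ul{S}=\ul{R}}e^{\phi(\ul{S})+\log\psi(\ul{S})-\log\psi(\tigma\ul{S})-P_G(\phi)}\,\wh{p}_{\ul{S}}.
\]
Since $\tigma\ul{S}=\ul{R}$ for each term in the sum, the factor $\log\psi(\tigma\ul{S})$ equals $\log\psi(\ul{R})$ and may be pulled out. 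Multiplying both sides by $\psi(\ul{R})$ and using the definition $\wh{\mu}_{\ul{R}}=\psi(\ul{R})\wh{p}_{\ul{R}}$ and $\wh{\mu}_{\ul{S}}=\psi(\ul{S})\wh{p}_{\ul{S}}$ yields
\[
\wh{\mu}_{\ul{R}}\circ\sigma^{-1}=e^{-P_G(\phi)}\sum_{\tigma\ul{S}=\ul{R}}e^{\phi(\ul{S})}\,\wh{\mu}_{\ul{S}},
\]
which is the first assertion.

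For the absolute continuity, the key point is that $\psi$ is positive (by the definition of a $\phi$-harmonic function) and continuous on $\Tigma_L$, so multiplication by $\psi(\ul{R})$ only rescales $\wh{p}_{\ul{R}}$ by a positive, finite constant and does not change its measure class. Hence for $\ul{R},\wt{\ul{R}}\in\Tigma_L\cap[R]$ and the holonomy $\wh{\Gamma}_{\ul{R}\,\wt{\ul{R}}}$ of Corollary \ref{gauraluilui2}, the relation $\wh{p}_{\ul{R}}\circ\wh{\Gamma}_{\ul{R}\,\wt{\ul{R}}}^{-1}\sim\wh{p}_{\wt{\ul{R}}}$ immediately transfers to
\[
\wh{\mu}_{\ul{R}}\circ\wh{\Gamma}_{\ul{R}\,\wt{\ul{R}}}^{-1}\sim\wh{\mu}_{\wt{\ul{R}}},
\]
with Radon–Nikodym derivative differing from that of the $\wh{p}$-family by the bounded, positive factor $\psi(\ul{R})/\psi(\wt{\ul{R}})$, which by Lemma \ref{logHarmonicHolder} is in fact uniformly controlled on any compact piece of $[R]$.

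There is essentially no obstacle here: the statement is a direct bookkeeping consequence of Theorem \ref{gauraluilui} and Corollary \ref{gauraluilui2}. The only minor point to verify is that the exponent $\log\psi(\tigma\ul{S})$ in the transformation relation of Theorem \ref{gauraluilui} is indeed constant over the summation $\tigma\ul{S}=\ul{R}$, which follows because $\tigma\ul{S}=\ul{R}$ by definition of the sum; this is what allows $\psi(\ul{R})$ to cancel against the $\log\psi(\tigma\ul{S})$ term after the multiplication.
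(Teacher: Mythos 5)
Your proof is correct and matches the paper's own (one-line) argument: multiply the relation from Theorem \ref{gauraluilui} by $\psi(\ul{R})$, cancel $\log\psi\circ\tigma(\ul{S})=\log\psi(\ul{R})$ inside the sum, and observe that absolute continuity passes through the positive rescaling by $\psi$, which Corollary \ref{gauraluilui2} and Lemma \ref{logHarmonicHolder} keep uniformly controlled on cylinders.
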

The invariance follows from Definition \ref{greendoor4} and from \eqref{greendoor3}. The absolute continuity follows from Corollary \ref{gauraluilui2}.

\begin{cor}\label{endofNoel}
	When $\phi$ is recurrent, the family $\{\wh{\mu}_{\ul{R}}\}_{\ul{R}\in\Tigma_L}$ is the unique (up to scaling) continuous family of measures which satisfies $\wh{\mu}_{\ul{R}}\circ \sigma^{-1}= e^{-P_{G}(\phi)}\sum_{\tigma\ul{S}=\ul{R}}e^{\phi(\ul{S})}\wh{\mu}_{\ul{S}}$.
\end{cor}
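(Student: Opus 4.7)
Suppose $\{\wt\mu_{\ul R}\}_{\ul R\in\Tigma_L}$ is another continuous family of non-zero measures on $\Tigma$, each supported on its fiber $\{\ul v\in\Tigma:v_i=R_i\ \forall i\le 0\}$, satisfying the stated invariance. The plan is to extract a $\phi$-harmonic function from total masses, invoke recurrence to pin it down as a scalar multiple of $\psi$, and then identify the normalized probability measures with those of Theorem~\ref{gauraluilui} via an explicit cylinder formula.

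Let $F(\ul R):=\wt\mu_{\ul R}(\Tigma)$. Applying the invariance to $\mathbb{1}_\Tigma$ gives $F=e^{-P_G(\phi)}L_\phi F$, so $L_\phi F = e^{P_G(\phi)}F$, and weak-$*$ continuity of the family yields continuity of $F$. If $F(\ul R_0)=0$, then $\wt\mu_{\ul R_0}\equiv 0$, and by the invariance together with the disjointness of the fibers indexed by the distinct $\ul S$ with $\tigma\ul S=\ul R_0$, each such $\wt\mu_{\ul S}$ vanishes. Iterating, $F$ vanishes on $\bigcup_{n\ge 0}\tigma^{-n}\{\ul R_0\}$, which is dense in $\Tigma_L$ by irreducibility; continuity then forces $F\equiv 0$, contradicting non-zeroness of the family. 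So $F$ is a continuous positive $\phi$-harmonic function, and Sarig's uniqueness theorem in the recurrent case (cited just before Lemma~\ref{logHarmonicHolder}) gives $F=c\,\psi$ for some $c>0$.

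Normalize to probability measures $\wt p'_{\ul R}:=\wt\mu_{\ul R}/F(\ul R)$; these form a continuous family supported on the fibers and satisfy $\wt p'_{\ul R}\circ\sigma^{-m}=\sum_{\tigma^m\ul S=\ul R}e^{\wt\phi_m(\ul S)}\wt p'_{\ul S}$ for every $m\ge 0$, with $\wt\phi:=\phi+\log\psi-\log\psi\circ\tigma-P_G(\phi)$ exactly as in the proof of Theorem~\ref{gauraluilui}. Fix $\ul w=(R_0,w_1,\ldots,w_b)$ with $b\ge 1$, and note $\sigma^{-b}[\ul w]_{-b}=[\ul w]_0$. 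The iterated invariance gives
\[
\wt p'_{\ul R}([\ul w]_0)=\sum_{\tigma^b\ul S=\ul R}e^{\wt\phi_b(\ul S)}\,\wt p'_{\ul S}([\ul w]_{-b}).
\]
Since $[\ul w]_{-b}$ constrains only non-positive indices and $\wt p'_{\ul S}$ is a probability measure on the fiber $\{\ul v:v_i=S_i\ \forall i\le 0\}$, we have $\wt p'_{\ul S}([\ul w]_{-b})=\mathbb{1}\{S_{-b+i}=w_i,\ i=0,\ldots,b\}$; this selects a unique admissible $\ul S^*$ and yields $\wt p'_{\ul R}([\ul w]_0)=e^{\wt\phi_b(\ul S^*)}$, exactly the value given by equation~\eqref{draje} for $\wh p_{\ul R}([\ul w]_0)$. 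Cylinders supported in non-positive indices give the same indicator on both families, and positively-shifted cylinders $[\ul w]_a$ with $a>0$ reduce to the base case above through one more application of invariance. Since cylinders generate the Borel $\sigma$-algebra, $\wt p'_{\ul R}=\wh p_{\ul R}$ and therefore $\wt\mu_{\ul R}=c\,\wh\mu_{\ul R}$.

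The principal difficulty is the positivity of $F$: one must propagate the vanishing of $\wt\mu_{\ul R_0}$ through disjoint fibers along the tree of $\tigma$-preimages, combine this with the density supplied by irreducibility, and finally invoke continuity. The remaining steps use only the invariance, the fiber-support condition, and the normalization $L_{\wt\phi}1=1$, all immediate from the setup and from Theorem~\ref{gauraluilui}.
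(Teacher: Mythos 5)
Your proof is correct, and it takes a genuinely different route from the paper's. The paper treats recurrence on the ``eigenmeasure'' side: it forms the $\sigma$-invariant measure $\wh\mu=\int\wh\mu_{\ul R}\,dp(\ul R)$, identifies it on cylinders with (the invariant extension of) $\psi\cdot p$ using the uniqueness of the $\phi$-conformal measure $p$ in the recurrent case, and then recovers the $\wh\mu_{\ul R}$ as a $p$-a.e.\ disintegration, extended to all $\ul R$ by continuity and the fact that $p$ charges every cylinder. You instead work on the ``eigenfunction'' side: you extract $F(\ul R):=\wt\mu_{\ul R}(\Tigma)$, observe $L_\phi F=e^{P_G(\phi)}F$ directly from the invariance relation, propagate any zero of $F$ through $\tigma$-preimages and use irreducibility plus weak-$*$ continuity to force $F>0$, invoke Sarig's recurrent-case uniqueness of the $\phi$-harmonic function to get $F\propto\psi$, and then normalize and match the explicit cylinder formula \eqref{draje} from Theorem~\ref{gauraluilui}. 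Both arguments hinge on the same Ruelle--Perron--Frobenius uniqueness in the recurrent case, just through dual objects; yours is more hands-on, avoids any disintegration machinery, and has the pleasant side benefit of explicitly settling the nondegeneracy of the total-mass function, which the paper elides. Two small cosmetic notes: the disjointness of the fibers over distinct preimages $\ul S$ is not actually needed in the propagation step (nonnegativity of the measures already forces each summand to vanish), and you implicitly use that the fibers are compact cylinders so weak-$*$ continuity of the family does give continuity of $F$; both observations are correct and worth stating so the reader doesn't stumble.
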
 
\begin{proof}
Let $p$ and $\psi$ be version of the (unique up to scaling) $\phi$-conformal measure and $\phi$-harmonic function on $\Tigma_L$, respectively. Then, $\wh{\mu}=\int \wh{\mu}_{\ul{R}}dp(\ul{R})$ is a $\sigma$-invariant measure on $\Tigma$. By \cite[Theorem~1]{SarigNR}, the recurrence of $\phi$ implies that $p$ is conservative and finite on cylinders. Thus, $\wh{\mu}$ must be conservative as well. Fix $[R]\subseteq \Tigma_L$, and let $[R,R_{-n+2},...,R_{-1},R]$ be an admissible cylinder. Then,
\begin{align}\label{laurafatigue}
\wh{\mu}([R,R_{-n+2},...,R_{-1},R])=\int \wh{\mu}_{\ul{R}}([R,R_{-n+2},...,R_{-1},R])dp\propto(\psi\cdot p)([R,R_{-n+2},...,R_{-1},R]).\end{align}

This determines $\wh{\mu}_{\ul{R}}$ for $p$-a.e $\ul{R}$ (up to a scaling constant), and since $p$ gives a positive volume to every cylinder (\cite{SarigPR,SarigNR}), the continuity of the family $\{\wh{\mu}_{\ul{R}} \}_{\ul{R}\in\Tigma_L}$ determines the family uniquely.  
\end{proof}


\begin{definition}[Leaf Measures]\label{LeafMeasures}
	Let $\varphi$ be a $\cont$ potential, and let $\Tigma_L$ be a maximal irreducible component of $\Sig_L$, and let $\ul{R}\in\Tigma_L$. The corresponding {\em leaf measure} is $\mu_{\ul{R}}:=\wh{\mu} _{\ul{R}}\circ \wh{\pi}^{-1}$, where $\wh{\mu} _{\ul{R}}$ is 
	as in Definition \ref{greendoor4}.
\end{definition}

\begin{theorem}\label{smoothMeasures}
Let $\varphi:\HWT_\chi\rightarrow \mathbb{R}$ be a 
$\cont$ potential. Let $H_\chi(p)$ be an ergodic homoclinic class of a periodic and $\chi$-hyperbolic point $p$. Then there exists a corresponding family of leaf measures which is $\varphi$-invariant, and is absolutely continuous, that is,
	\begin{enumerate}
		\item Leaf measures: $\forall\ul{R}\in\Tigma_L$, $\mu_{\ul{R}}$ is carried by $\wh{\pi}[\{\ul{v}\in\Tigma:v_i=R_i,\forall i\leq0\}]\subseteq V^u(\ul{R})$. 
		\item $\varphi$-invariance up to a bounded coboundary: $\forall\ul{R}\in\Tigma_L$, $$\mu_{\ul{R}}\circ f^{-1}=e^{-P_{H_\chi(p)}(\varphi)}\cdot\sum_{\tigma\ul{S}=\ul{R}}e^{A-A\circ f^{-1}}\cdot e^{\varphi}\cdot\mu_{\ul{S}},$$ where $A:\HWT_\chi\rightarrow\mathbb{R}$ is bounded and given by Theorem \ref{SinaiBowen}.
		\item Absolute continuity: for any cylinder $ [R]\subseteq \Tigma_L$, and for any two chains $ \ul{R},\ul{S}\in \Tigma_L\cap[R]$, let $$\Gamma_{\ul{R}\,\ul{S}}: \wh{\pi}[\{\ul{v}\in\Tigma:v_i=R_i,\forall i\leq0\}] \rightarrow \wh{\pi}[\{\ul{v}\in\Tigma:v_i=S_i,\forall i\leq0\}] $$ be the holonomy map along the stable leaves of $\wh{\pi}[\{\ul{v}\in\Tigma:v_i=R_i,\forall i\leq0\}]$ (it is well-defined on the rectangle $\wpi[[R]]$). Then $\left(\mu_{\ul{R}}\circ \Gamma_{\ul{R}\,\ul{S}}^{-1}\right)(g)=C_\phi^{\pm1}\cdot \mu_{\ul{S}}(g)$ for every $g\in C(V^u(\ul{S}))$, where $C_\phi>0$ is a global constant depending only on $\phi$ (and a fixed corresponding harmonic function).
	\end{enumerate}
	$\mathcal{F}_{H_\chi(p)}(\varphi):=\{\mu_{\ul{R}}\}_{\ul{R}\in\Tigma_L}$ is a $\varphi$-invariant family of leaf measures (see Definition \ref{pcwis}).
\end{theorem}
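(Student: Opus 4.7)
The plan is to reduce the construction to the symbolic machinery already developed in the preceding sections: apply Sinai's theorem (Theorem \ref{SinaiBowen}) to lift $\varphi$ to a one-sided weakly H\"older potential $\phi$ on $\Sig_L$, invoke Theorem \ref{gauraluilui} and Definition \ref{greendoor4} to build a Markovian family of measures on a maximal irreducible component $\Tigma$, and then push it forward via $\wpi$ to obtain the leaf measures $\mu_{\ul{R}}$.

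Concretely, Theorem \ref{SinaiBowen} produces $\varphi^* = \varphi + A - A\circ f^{-1}$ with $\phi := \varphi^* \circ \wpi$ weakly H\"older continuous on $\Sig$; since $\varphi^*$ depends only on the past of the itinerary, $\phi$ descends to $\Sig_L$. Using Proposition \ref{homoclinicirreducible} I choose a maximal irreducible component $\Tigma \subseteq \Sig$ with $\wpi[\Tigma^\#] = H_\chi(p)$ modulo conservative measures. The variational-principle argument already used inside the proof of Lemma \ref{periodicRecurrence} identifies $P_G(\phi|_{\Tigma_L}) = P_{H_\chi(p)}(\varphi)$, which is finite by the Ruelle inequality together with boundedness of $\varphi$, so the hypotheses of Theorem \ref{gauraluilui} hold. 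Fixing a $\phi$-harmonic function $\psi$ on $\Tigma_L$ (existence recorded in \textsection \ref{rec1}), I set $\wh{\mu}_{\ul{R}} := \psi(\ul{R}) \wh{p}_{\ul{R}}$ as in Definition \ref{greendoor4}, and define $\mu_{\ul{R}} := \wh{\mu}_{\ul{R}} \circ \wpi^{-1}$ per Definition \ref{LeafMeasures}.

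To verify the three conclusions: (1) follows because $\wh{p}_{\ul{R}}$ is supported on $\{\ul{v}\in\Tigma : v_i = R_i \text{ for all } i\leq 0\}$ by Theorem \ref{gauraluilui}, and every such $\wpi(\ul{v})$ lies in $\bigcap_{j\geq 0} f^j[R_{-j}] = W^u(\ul{R})$ (Definition \ref{unstabledisconnected}), which sits inside the local maximal-dimension unstable leaf $V^u(\ul{R})$ constructed in the preliminary sections. Claim (2) is obtained by pushing the identity of Corollary \ref{jeo} through $\wpi$ and substituting $\phi(\ul{S}) = \varphi(\wpi(\ul{S})) + A(\wpi(\ul{S})) - A(f^{-1}(\wpi(\ul{S})))$ to produce the advertised weight $e^{A-A\circ f^{-1}}\cdot e^\varphi$, with pressure $P_G(\phi) = P_{H_\chi(p)}(\varphi)$. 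For (3), Corollary \ref{gauraluilui2} provides a uniform symbolic bound $\wh{p}_{\ul{R}}\circ \wh{\Gamma}_{\ul{R}\,\wt{\ul{R}}}^{-1} = \wt{C}_\phi^{\pm 1}\cdot \wh{p}_{\wt{\ul{R}}}$; after identifying $\wpi\circ \wh{\Gamma}_{\ul{R}\,\wt{\ul{R}}}$ with the geometric stable holonomy $\Gamma_{\ul{R}\,\wt{\ul{R}}}\circ \wpi$ on the rectangle $\wpi[[R]]$ and absorbing the bounded factor $\psi(\ul{R})/\psi(\wt{\ul{R}})$ (controlled by Lemma \ref{logHarmonicHolder}) into a single constant $C_\phi$, the claim descends. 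The $\sigma$-compactness and continuity required by Definition \ref{pcwis} come respectively from $\Tigma_L$ being a countable union of compact cylinders and from the weak-$*$ continuity of $\ul{R}\mapsto \wh{p}_{\ul{R}}$ noted in the remark after Corollary \ref{gauraluilui2}, together with continuity of $\psi$.

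The step I expect to be the main obstacle is the holonomy identification in (3): one must verify that the purely symbolic operation ``replace the past of $\ul{v}$ by that of $\wt{\ul{R}}$'' coincides, after $\wpi$, with the geometric stable-leaf holonomy between the two slices of $\wpi[[R]]$. This reduces to showing that two chains sharing the same positive tail are sent by $\wpi$ to two points on the same local stable manifold, which in turn follows from Claim \ref{chikfila} (the leaf $V^s(\ul{u})$ depends only on the positive coordinates) together with the Smale-bracket rectangle property in Definition \ref{Doomsday}(2)(c). Once this commutation is secured on a dense subset such as $\Sig^\circ \cap \Tigma$ and extended by continuity of the family, the remaining verifications are routine.
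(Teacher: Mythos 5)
Your proposal follows essentially the same route as the paper: apply Theorem \ref{SinaiBowen} to get a one-sided weakly H\"older $\phi$, pass to the maximal irreducible component from Proposition \ref{homoclinicirreducible}, identify $P_G(\phi|_{\Tigma_L})=P_{H_\chi(p)}(\varphi)$ via the variational principle, build $\wh{\mu}_{\ul{R}}=\psi\cdot\wh{p}_{\ul{R}}$ via Theorem \ref{gauraluilui}/Definition \ref{greendoor4}, push forward by $\wpi$, and then read off (2) from Corollary \ref{jeo} and (3) from Corollary \ref{gauraluilui2} together with the commutation $\Gamma_{\ul{R}\,\ul{S}}\circ\wpi=\wpi\circ\wh{\Gamma}_{\ul{R}\,\ul{S}}$ and Lemma \ref{logHarmonicHolder}. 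One small caveat on your argument for (1): for a general $\ul{v}\in\Tigma$ with $v_i=R_i$ ($i\leq0$) one only has $f^{-j}(\wpi(\ul{v}))\in\overline{R_{-j}}$ (recall $\wpi(\ul{R})\in\overline{R_0}$, not $R_0$), so the inclusion $\wpi(\ul{v})\in W^u(\ul{R})=\bigcap_j f^j[R_{-j}]$ is not automatic off $\Sig^\circ$; the containment $\wpi[\{\ul{v}\in\Tigma:v_i=R_i,\forall i\leq0\}]\subseteq V^u(\ul{R})$ is the structural fact from the local unstable-leaf construction that the paper invokes directly, and it is cleaner to cite it than to route through $W^u(\ul{R})$.
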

\begin{proof}
Let $A,\phi:\HWT_\chi\rightarrow\mathbb{R}$ be given by Theorem \ref{SinaiBowen} s.t $\varphi=\varphi^*+A-A\circ f^{-1}$, where $A$ is bounded and $\phi=\varphi^*\circ\wpi:\Sig\rightarrow\mathbb{R}$ is well defined and weakly H\"older continuous by the remark after Theorem \ref{SinaiBowen}; and $\phi(\ul{R})$ depends only on the negative coordinates of $\ul{R}$. 

Let $\Tigma$ be a maximal irreducible component of $\Sig$ s.t $\wpi[\Tigma^\#]=H_\chi(p)$ modulo all conservative measures (see Proposition \ref{homoclinicirreducible}). Let $[R]\subseteq \Tigma_L$, and let $\ul{R},\ul{S}\in[R]\cap \Tigma_L$. 

Since $\varphi$ is bounded, so is $\phi$.  W.l.o.g $P_G(\phi|_{\Tigma_L})=0$, otherwise replace $\phi$ by $\phi-P_G(\phi|_{\Tigma_L})$. By the spectral decomposition, we may assume w.l.o.g that $(\Tigma,\sigma)$ is topologically mixing (see \cite[Theorem~2.5]{SarigTDFSymposium}). Thus, by the variational principle see \cite[Theorem~4.4]{SarigTDF} (and the fact that $\wpi|_{\Tigma^\#}$ is finite-to-one which allows to lift probabilities while preserving the entropy\footnote{Every invariant probability measure can be decomposed into its ergodic components, and each component lifts to an invariant probability measure. Then this lift is $N$-to-one for some $N\in\mathbb{N}$ since the cardinality of fibers is an invariant function. Finite extensions preserve entropy, and entropy is affine.}), $P_G(\phi|_{\Tigma})=P_{H_\chi(p)}(\varphi)<\infty$.
\begin{enumerate}
	\item \begin{align*}
\mu_{\ul{R}}(\wh{\pi}[\{\ul{v}\in\Tigma:v_i=R_i,\forall i\leq0\}])= &\wh{\mu}_{\ul{R}}\circ\wh{\pi}^{-1}(\wh{\pi}[\{\ul{v}\in\Tigma:v_i=R_i,\forall i\leq0\}])\\ 
\geq &\wh{\mu}_{\ul{R}}(\{\ul{v}\in\Tigma:v_i=R_i,\forall i\leq0\})=\wh{\mu}_{\ul{R}}(1)= \mu_{\ul{R}}(1). 	
 \end{align*}
 So $\wpi[\{\ul{v}\in \Tigma: v_i=R_i, \forall i\leq 0\}] \subseteq V^u(\ul{R})$ has full measure.
	\item By the relation $\wh{\pi}\circ\sigma=f\circ\wh{\pi}$ and Corollary \ref{jeo}, 
\begin{align*}
	\mu_{\ul{R}}\circ f^{-1}= \wh{\mu}_{\ul{R}}\circ\wh{\pi}	^{-1}\circ f^{-1}=\left(\wh{\mu}_{\ul{R}}\circ 	\sigma^{-1}\right)\circ \wh{\pi}^{-1}= \left(\sum_{\tigma\ul{Q}=\ul{R}}e^{\phi(\ul{Q})}\wh{\mu}_{\ul{Q}}\right)\circ \wh{\pi}^{-1}= \sum_{\tigma\ul{Q}=\ul{R}}e^{\phi(\ul{Q})}\mu_{\ul{Q}}.
\end{align*}
For a continuous test function $g\in C(V^u(\ul{Q}))$,
\begin{align*}
	e^{\phi}\cdot \mu_{\ul{Q}}(g)=\int\limits_{\substack{\ul{Q}'\in\Tigma:\\ Q_i=Q_i',\forall i\leq0}} e^{\varphi^*\circ\wpi}g\circ\wpi d\wh{\mu}_{\ul{Q}}= \int\limits_{\substack{\ul{Q}'\in\Tigma: \\ Q_i=Q_i',\forall i\leq0}} \left(e^{\varphi^*}g\right)\circ \wpi d\wh{\mu}_{\ul{Q}}=\left(e^{\varphi+A-A\circ f^{-1}}\cdot \mu_{\ul{Q}}\right)(g).
\end{align*}
This completes the $\varphi$-invariance up to a bounded coboundary.
	\item Let $\Gamma_{\ul{R}\,\ul{S}}: \wh{\pi}[\{\ul{v}\in\Tigma:v_i=R_i,\forall i\leq0\}] \rightarrow \wh{\pi}[\{\ul{v}\in\Tigma:v_i=S_i,\forall i\leq0\}] $ be the holonomy map along the stable leaves of points in $\wh{\pi}[\{\ul{v}\in\Tigma:v_i=R_i,\forall i\leq0\}]$. Let $\wh{\Gamma}_{\ul{R}\,\ul{S}}:\{\ul{v}\in\Tigma: v_i=R_i,\forall i\leq0\}\rightarrow \{\ul{v}\in\Tigma: v_i=S_i,\forall i\leq0\}$ be a continuous and invertible map s.t $\ul{v}\mapsto (S_i)_{i\leq0}\cdot (v_i)_{i\geq0}$, where $\cdot$ denotes an admissible concatenation. Let $g\in C(V^u(\ul{S}))$, and write $\wh{g}:=g\circ \wh{\pi}\in C(\{\ul{v}\in\Tigma:v_i=S_i,\forall i\leq0\})$. It follows that $\Gamma_{\ul{R}\,\ul{S}}\circ\wh{\pi}=\wh{\pi}\circ \wh{\Gamma}_{\ul{R}\,\ul{S}}$, and so $\left(\mu_{\ul{R}}\circ \Gamma_{\ul{R}\,\ul{S}}^{-1}\right)(g)=\left(\wh{\mu}_{\ul{R}}\circ \wh{\Gamma}_{\ul{R}\,\ul{S}}^{-1}\right)(\wh{g})= \psi(\ul{R})\cdot\left(\wh{p}_{\ul{R}}\circ \wh{\Gamma}_{\ul{R}\,\ul{S}}^{-1}\right)(\wh{g}) $ and $\mu_{\ul{S}}(g)= \psi(\ul{S})\cdot\wh{p}_{\ul{S}}(\wh{g})$. By Lemma \ref{logHarmonicHolder}, $\mathrm{Var}_1(\log\psi)<\infty$. Thus, by \eqref{seasalt}, we are done.
\end{enumerate}	
\end{proof}

\noindent\textbf{Remark:} Absolute continuity implies a local product structure. Other related important properties of hyperbolic equilibrium states of H\"older continuous potentials are true, such as being Bernoulli up to a period. This has been proven when $\dim M=2$ in \cite{EqStateBernoulli} and for the setup of countable Markov shifts in \cite{Daon}. Sarig's proof extends, as it only uses the fact that the equilibrium state can be coded as an equilibrium state on a countable Markov shift, of a weakly H\"older continuous potential. In \cite{Daon} Daon relaxes the assumption on the regularity of the potential to the Walters property.

\section{Proofs of Main Results}
\subsection{Local Equilibrium States and the Leaf Condition}

\begin{theorem}\label{main1WIS}
	Let $M$ be a compact Riemannian manifold without boundary and of dimension $d\geq2$. Fix $\chi>0$ and $\epsilon=\epsilon_\chi$ as in Lemma \ref{forChaptoro6ixo}. Let $f\in \mathrm{Diff}^{1+\beta}(M)$, $\beta>0$, let $\chi>0$, and let $\varphi:M\rightarrow \mathbb{R}$ be a 
$\cont$ potential. Let $p$ be a $\chi$-hyperbolic periodic point. Then, there exists a $\chi$-hyperbolic local equilibrium state of $\varphi$ on $H_\chi(p)$ if and only if the following two conditions hold:
	\begin{enumerate}
		\item $$\sum_{n\geq1}\sum_{f^n(q)=q,q\in H_\chi(p)\cap \Lambda_l}e^{\varphi_n(q)-n\cdot P_{H_\chi(p)}(\varphi)}=\infty,$$
where $\Lambda_l$ is a level set s.t $p\in \Lambda_{l\cdot e^{-2\sqrt[3]{\epsilon}}}$; and 
		\item there exists a leaf measure 
$\mu\in\mathcal{F}_{H_\chi(p)}(\varphi)$ s.t $$\mu(\HWT_\chi^{\mathrm{PR}})>0,$$
	where $\mathcal{F}_{H_\chi(p)}(\varphi)$ is any $\varphi$-invariant family of measures given by Theorem \ref{smoothMeasures}.
\end{enumerate}
If a $\chi$-hyperbolic local equilibrium state of $\varphi$ exists on $H_\chi(p)$, then its conditional measures on unstable leaves are proportional to the members of the $\varphi$-invariant family of measures; and it is unique. 
\end{theorem}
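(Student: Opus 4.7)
The overall plan is to transfer the problem to the symbolic space via Theorem~\ref{SinaiBowen} and Proposition~\ref{homoclinicirreducible}, and then apply Sarig's thermodynamic formalism for countable Markov shifts. Let $\Tigma\subseteq\Sig$ be a maximal irreducible component with $\wpi[\Tigma^\#]=H_\chi(p)$ modulo all conservative measures, and lift $\varphi$ via the Sinai decomposition $\varphi=\varphi^*+A-A\circ f^{-1}$ of Theorem~\ref{SinaiBowen} to the one-sided weakly H\"older continuous potential $\phi:=\varphi^*\circ\wpi:\Tigma_L\to\mathbb{R}$. As noted in the proof of Lemma~\ref{periodicRecurrence}, the variational principle combined with finiteness of the fibers of $\wpi|_{\Tigma^\#}$ gives $P_G(\phi|_{\Tigma_L})=P_{H_\chi(p)}(\varphi)<\infty$. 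The strategy is to show that conditions~(1) and~(2) together are equivalent to $\phi|_{\Tigma_L}$ being positive recurrent in the sense of \cite{SarigTDF}, and then to read off the equilibrium state and its conditional measures from the corresponding Ruelle--Perron--Frobenius structure.

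For the ``only if'' direction, any $\chi$-hyperbolic local equilibrium state $\nu$ on $H_\chi(p)$ lifts through $\wpi|_{\Tigma^\#}$ to an $f$-invariant $\phi$-equilibrium state $\wh\nu$ on $\Tigma$ with the same entropy, so by Sarig's characterization of equilibrium states on countable Markov shifts \cite{SarigTDF} the potential $\phi|_{\Tigma_L}$ must be positive recurrent; in particular it is recurrent, and Lemma~\ref{periodicRecurrence} then delivers condition~(1). For condition~(2), Corollary~\ref{endofNoel} identifies $\{\wh\mu_{\ul R}\}_{\Tigma_L}$ with the unique continuous invariant family in the recurrent case, so the conditional measures of $\wh\nu$ on the atoms $\{\ul v:v_i=R_i,\,i\leq 0\}$ of the past-fixing partition are proportional to $\wh\mu_{\ul R}$, and their push-forwards by $\wpi$ are proportional to $\mu_{\ul R}$; since $\nu$ is ergodic and $\chi$-hyperbolic, Birkhoff's theorem applied to $\mathbb{1}_{\Lambda_N}$ yields $\nu(\HWT_\chi^{\mathrm{PR}})=1$, whence $\mu_{\ul R}(\HWT_\chi^{\mathrm{PR}})>0$ for $p$-a.e.\ $\ul R$.

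For the ``if'' direction, condition~(1) and Lemma~\ref{periodicRecurrence} give recurrence of $\phi|_{\Tigma_L}$, which via \cite{SarigPR,SarigNR} supplies a $\phi$-conformal Radon measure $p$ and a $\phi$-harmonic positive continuous function $\psi$, each unique up to scaling, hence a conservative $\sigma$-invariant measure $\wh m=\int\wh\mu_{\ul R}\,dp(\ul R)$ on $\Tigma$ that is finite on cylinders. Condition~(2) furnishes $\ul R_0$ with $\mu_{\ul R_0}(\HWT_\chi^{\mathrm{PR}})>0$, and Corollary~\ref{gauraluilui2} propagates this positivity along stable holonomies to $p$-a.e.\ $\ul R$ in the cylinder of $\ul R_0$, so that $\wh m\bigl(\wpi^{-1}[\HWT_\chi^{\mathrm{PR}}]\cap\Tigma^\#\bigr)>0$. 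Choosing $N$ with $\wh m\bigl(\wpi^{-1}[\Lambda_N]\cap\Tigma^\#\bigr)>0$, this preimage is contained in a finite union of rectangles whose coding multiplicity is uniformly bounded (cf.\ Definition~\ref{N_R} and the proof of Lemma~\ref{periodicRecurrence}), hence of finite $\wh m$-measure. The positive lower-density visits to $\Lambda_N$ built into the definition of $\HWT_\chi^{\mathrm{PR}}$, combined with Hopf's ratio ergodic theorem for the conservative $\wh m$, then force every ergodic component of $\wh m$ that charges $\wpi^{-1}[\HWT_\chi^{\mathrm{PR}}]$ to have finite total mass, which is exactly positive recurrence of $\phi|_{\Tigma_L}$.

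Once positive recurrence is established, Sarig's RPF theorem produces a unique $\phi$-equilibrium probability $\wh\nu$ on $\Tigma$ whose conditional measures on the past-fixing atoms are, up to a global scalar, exactly $\{\wh\mu_{\ul R}\}$. Pushing forward by $\wpi$ and absorbing the coboundary $A-A\circ f^{-1}$ then yields the desired $\chi$-hyperbolic local equilibrium state $\nu_\varphi:=\wh\nu\circ\wpi^{-1}$ for $\varphi$ on $H_\chi(p)$; its conditional measures on unstable leaves are accordingly proportional to the members of $\mathcal{F}_{H_\chi(p)}(\varphi)$, and uniqueness of $\nu_\varphi$ is inherited from uniqueness of the RPF structure in the positive recurrent case. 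The technical heart of the argument, and the step I expect to be the main obstacle, is the recurrence-to-positive-recurrence upgrade in the ``if'' direction: because the overlap of $\wpi$ is only uniformly bounded inside Pesin level sets, the leaf-level statement $\mu_{\ul R_0}(\HWT_\chi^{\mathrm{PR}})>0$ can only be converted into finiteness of $\wh m$ by funneling it through these level sets, which is precisely why conditions~(1) and~(2) are jointly required here, in contrast with the SRB situation of \cite{SRBleaf} where Lebesgue leaf measures overlap automatically and (1) becomes redundant.
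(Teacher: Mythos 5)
Your proposal is correct and follows essentially the same route as the paper: transfer to $\Tigma_L$ via the Sinai decomposition, use Lemma~\ref{periodicRecurrence} to translate the periodic-orbit condition into recurrence of $\phi$, construct $\wh{m}=\int\wh{\mu}_{\ul R}\,dp$, and convert the leaf condition into positive recurrence via a finiteness argument, ultimately concluding via Sarig's RPF theory and the coding's finite multiplicity. The only substantive difference is that where the paper delegates the conservativity-plus-leaf-condition-implies-finiteness step to \cite[Theorems~6.2, 6.8]{SRBleaf}, you sketch it directly with Hopf's ratio ergodic theorem; this is a legitimate reconstruction, with one small slip -- the definition of $\HWT_\chi^{\mathrm{PR}}$ uses a $\limsup$, so it asserts positive \emph{upper} density of visits to $\Lambda_N$, not lower density as you wrote, but the Hopf argument $S_n 1/S_n\mathbb{1}_A\to m(X)/m(A)$ still yields finiteness from upper density since $\liminf(n/S_n\mathbb{1}_A)<\infty$.
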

\begin{proof}
The ``only if" direction is straightforward: Let $\mu$ be a $\chi$-hyperbolic local equilibrium state of $\varphi$ on $H_\chi(p)$. Let $\Tigma$ be a maximal irreducible component which lifts all conservative measures on $H_\chi(p)$ (see Proposition \ref{homoclinicirreducible}). Lift $\mu$ to an ergodic, $\sigma$-invariant, probability measure on $\Tigma$, $\wh{\mu}$, s.t $\wh{\mu}\circ\wh{\pi}^{-1}=\mu$, and $h_{\wh{\mu}}(\sigma)+\int \varphi\circ\wpi d\wh{\mu}= h_\mu(f)+\int \varphi d\mu$. This is possible by choosing a suitable ergodic component of $\int \frac{1}{|N_x|}\sum_{\ul{v}\in N_x}\delta_{\ul{v}}d\mu(x)$, where $N_x:=\wh{\pi}^{-1}[\{x\}]\cap\Sig^\#$. Let $\phi:\Tigma_L\rightarrow\mathbb{R}$ be given by Theorem \ref{SinaiBowen} and the remark which follows it, and let $\{\wh{\mu}_{\ul{R}}\}_{\ul{R}\in\Tigma_L}$ be given by Definition \ref{greendoor4}. 

By the spectral decomposition (see \cite[Theorem~2.5]{SarigTDFSymposium}), we may assume w.l.o.g that $(\Tigma_L,\tigma) $ is topologically mixing. It follows that 
\begin{align*}
h_{\wh{\mu}}(\sigma)+\int \phi d\wh{\mu}=&\sup\{h_{\wh{\nu}}(\sigma)+\int \phi d\wh{\nu}:\wh{\nu}\text{ is an invariant probability measure on }\Tigma\}\\
= &\sup\{h_{\wh{\nu}\circ\tau^{-1}}(\tigma)
+\int \phi d\wh{\nu}\circ\tau^{-1}:\wh{\nu}\text{ is an invariant probability measure on }\Tigma\},
 \end{align*}
 where $\tau:\Tigma\rightarrow\Tigma_L$ is the projection onto the non-positive coordinates. Every invariant probability $\nu$ on $\Tigma_L$ can be lifted to an invariant probability measure on $\Tigma$, $\wh{\nu}$, s.t $\wh{\nu}\circ \tau^{-1}=\nu$ and $h_{\nu}(\tigma)=h_{\wh{\nu}}(\sigma)$. So, 
 $$h_{\wh{\mu}\circ\tau^{-1}}(\tigma)+\int \phi d\wh{\mu}\circ\tau^{-1}=\sup\{h_{\nu}(\sigma)+\int \phi d\nu:\nu\text{ is an invariant probability measure on }\Tigma_L\}.$$

 Therefore, by \cite{BuzziSarig}, $\phi$ is recurrent on $\Tigma_L$. Thus, by Lemma \ref{periodicRecurrence}, $\sum\limits_{n\geq1}\sum\limits_{\substack{f^n(q)=q,\\ q\in H_\chi(p)\cap \Lambda_l}}e^{\varphi_n(q)-n\cdot P_{H_\chi(p)}(\varphi)}=\infty$. 

By \cite[Theorem~4.5]{SarigTDF}, $\wh{\mu}\circ\tau^{-1}=\psi\cdot p$, where $\psi$ is the $\phi$-harmonic function and $p$ is the $\phi$-conformal measure, scaled s.t $\psi\cdot p(1)=1$, and $\psi,p$ are unique up to a scaling. Then, by \eqref{laurafatigue}, $\wh{\mu}=\int_{\Tigma_L} \wh{\mu}_{\ul{R}}dp(\ul{R})$
. Thus, $\mu=\int \mu_{\ul{R}}dp$, where $\{\mu_{\ul{S}}\}_{\ul{S}\in\Tigma_L}$ is the $\varphi$-invariant family of leaf measures as in Theorem \ref{smoothMeasures}. In addition, $\mu$ is carried by $\HWT_\chi^{\mathrm{PR}}$ since it is an invariant probability measure. This concludes the ``only if" direction.

\medskip
The ``if" direction: $\phi$ is recurrent on $\Tigma_L$ by Lemma \ref{periodicRecurrence}. Let $p$ be a $\phi$-conformal measure carried by $\Tigma_L^\#$, and let $\{\mu_{\ul{S}}\}_{\ul{S}\in\Tigma_L}$ be the $\phi$-invariant family of leaf measures given by Theorem \ref{smoothMeasures}. By \cite[Theorem~1]{SarigNR}, $p$ is ergodic and conservative. Let
$$\mu:=\int_{\Tigma_L}\mu_{\ul{R}}dp(\ul{R}).$$ 
It follows that $\mu$ is an ergodic, $f$-invariant, and conservative measure on $H_\chi(p)$, and that $\mu$ is finite on Pesin level sets. This measure is finite if and only if $\phi$ is positive recurrent. For full details of the proof of these properties, see \cite[Theorem~6.2]{SRBleaf}. (In that proof $\phi$ denotes the geometric potential, but the proof is identical for general weakly H\"older continuous functions).

Notice that $\mu(1)=p(\psi)$, where $\psi$ is the unique (up to scaling) $\phi$-harmonic function on $\Tigma_L$. By \cite[Theorem~8]{SarigPR} if $h_{\mathrm{top}}(\sigma),\infty$ then $\mu(1)<\infty$ if and only if $\psi\cdot p$ is an equilibrium state of $\phi$ on $\Tigma_L$ (and thus $\mu$ is a local $\chi$-hyperbolic equilibrium state of $\varphi$ on $H_\chi(p)$).

By assumption $\exists \ul{R}\in \Tigma_L$ s.t $\mu_{\ul{R}}(\HWT_\chi^{\mathrm{PR}})>0$. By \cite[Theorem~6.8]{SRBleaf}, $\mu$ is finite. This concludes the ``if" direction.

The proof shows that the local equilibrium state can always be disintegrated into conditional measures on unstable leaves, where the conditional measures are $\{\mu_{\ul{R}}\}_{\ul{R}\in\Tigma_L}$.

We proceed to show the uniqueness of the $\chi$-hyperbolic local equilibrium state, when it exists. We saw that $\mu=\wh{\mu}\circ\wpi^{-1}$ where $\wh{\mu}$ is an equilibrium state for $\phi$ on $\Tigma$. By \cite{BuzziSarig}, $\wh{\mu}$ is unique and satisfies a variational principle on $\Tigma$. It follows that a $\chi$-hyperbolic local equilibrium states is unique on $H_\chi(p)$.
\end{proof}

\noindent\textbf{Remark:} If $\exists p\in \Lambda_\ell$ s.t $\sum_{n\geq 1}\sum_{f^n(q)=q, q\in H_\chi(p)\cap\Lambda_\ell}e^{\varphi_n(q)-n P_{H_\chi(p)}(\varphi)}=\infty$, 
then \linebreak$\sum_{n\geq 1}\sum_{f^n(q)=q, q\in H_\chi(p)\cap\Lambda_{\ell e^{2\sqrt[3]\epsilon}}}e^{\varphi_n(q)-n P_{H_\chi(p)}(\varphi)}=\infty$. Similarly, if $p\in \Lambda_{\ell e^{-2\sqrt[3]\epsilon}}$, then $p\in \Lambda_{\ell}$. Then the condition in Theorem \ref{main1WIS}(2) can be relaxed so $p\in \Lambda_l$.

\subsection{Ledrappier-Young Property for Hyperbolic Equilibrium States}
In the celebrated results of \cite{LedrappierYoungI, LedrappierYoungII}, Ledrappier and Young prove a general formula for the entropy of invariant probability measures of diffeomorphisms in terms of the local dimensions of their conditional measures. One very important application of their theory, is the characterization of SRB measures as those which satisfy Pesin's entropy formula, extending the previous same result of Ledrappier for hyperbolic SRB measures \cite{Ledrappier}. That is, an ergodic and hyperbolic invariant probability measure, $\mu$, which satisfies the following two properties must have conditional measures on unstable local manifolds which are absolutely continuous w.r.t the induced Riemannian volume measure: Let $\varphi(x):= -\log\Jac(d_xf|_{H^u(x)}):\HWT_\chi\rightarrow\mathbb{R}$ (the geometric potential), then
\begin{enumerate}
	\item $$h_{\mu}(f)+\int \varphi d\mu=\sup\Big\{h_{\nu}(f)+\int \varphi d\nu: \nu\text{ is an } \begin{array}{ll} \text{erg.}\\ \text{hyp.}\\ f\text{-inv.}\end{array}\text{prob.}\Big\},$$ where the geometric potential in the integrand is well defined almost everywhere for every hyperbolic invariant probability measure,
	\item $\sup\left\{h_{\nu}(f)+\int\varphi d\nu: \nu\text{ is an erg. hyp. } f\text{-inv. prob.}\right\}=0$,
\end{enumerate}
Note that $P_G(\phi|_{\Tigma_L})\leq0$ is always true, because of the Ruelle-Margulis inequality \cite{MargulisRuelleIneq}. The fact that SRB measures satisfy (1),(2) is due to Ledrappier and Strelcyn \cite{EntropyFormulaSRB}. 
A different way to write this characterization, is the following: Let $\mu$ be an ergodic hyperbolic invariant probability measure. Then $\mu$ is an SRB measure if and only if, 
\begin{enumerate}
	\item $\phi$ is positive recurrent when lifted to an irreducible component $\Tigma_L\subseteq \Sig_L$ (see \textsection \ref{rec1}), where $\phi$ is given by Theorem \ref{SinaiBowen} applied to $\varphi(x)
$,
	\item $P_G(\phi|_{\Tigma_L})=0$.
\end{enumerate}	
It follows that conditions (1) and (2) above are satisfied $\Leftrightarrow$ $\mu_{V^u}\ll m_{V^u}$, where $\mu_{V^u}$ is a conditional measure of $\mu$ w.r.t a measurable partition of local unstable leaves $\{V^u\}$, and $m_{V^u}$ is the induced Riemannian volume of $V^u$ (the conformal measure of the geometric potential $\varphi$ on $V^u$).

In this section, we extend this result by replacing the geometric potential $\varphi$ by a general potential which is 
$\cont$
, and by replacing the induced Riemannian volume $m_{V^u}$ 
by any $\varphi$-conformal family. 
We obtain a new proof 
 different to that of Ledrappier in the hyperbolic case \cite{Ledrappier}, and to that of Ledrappier and Young in the general case \cite{LedrappierYoungI}.

\begin{theorem}\label{mainWIS2}
	Let $M$ be a compact Riemannian manifold without boundary, and of dimension $d\geq2$. Let $f\in \mathrm{Diff}^{1+\beta}(M)$, $\beta>0$, let $\chi>0$ and $\epsilon=\epsilon_\chi>0$ as in Lemma \ref{forChaptoro6ixo}. Let $\varphi:WT_\chi^\epsilon\rightarrow \mathbb{R}$ be a 
$\cont$ potential (recall Definition \ref{MnfldWeakHolder}). Let $q$ be a $\chi$-hyperbolic periodic point. Assume that $H_\chi(q)$ admits a (unique) $\chi$-hyperbolic equilibrium state of $\varphi$, $\nu$
. For any family of conditional measures 
as in Theorem \ref{main1WIS}
, $\mathcal{F}_{H_\chi(q)}(\varphi)$, there is a sub-family $\mathcal{F}'$ which disintegrates $\mu$ as in Theorem \ref{main1WIS}, and for which the following holds: Fix $\mu\in \mathcal{F}
'$ which is carried by a local unstable leaf $V^u$. Assume that $H_\chi(q)$ admits a $\varphi$-conformal system of measures $\mathcal{C}:=\{m^\varphi_{W^u}:W^u\text{ is a local unstable leaf of }H_\chi(q)\}$.
Then, $\mu\ll m^\varphi_{V^u}$. In particular, $m_{V^u}^\varphi$ gives a positive measure to $\HWT_\chi^{\mathrm{PR}}$.
\end{theorem}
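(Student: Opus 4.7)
\medskip

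\noindent\textbf{Proof proposal.} The plan is to push everything to the symbolic space and compare the invariant family $\{\mu_{\ul{R}}\}$ with the conformal family $\{m^\varphi_{V^u}\}$ locally on cylinders, using positive recurrence of the lifted potential. Since $H_\chi(q)$ admits a $\chi$-hyperbolic equilibrium state, Theorem \ref{main1WIS} together with \cite[Theorem~1]{SarigPR} tells me that $\phi:\Tigma_L\to\mathbb{R}$ (obtained from $\varphi$ via Theorem \ref{SinaiBowen}) is positive recurrent on a maximal irreducible component $\Tigma\subseteq\Sig$ coding $H_\chi(q)$, that the $\phi$-conformal measure $p$ is carried by $\Tigma_L^\#$, and that the lifted equilibrium state equals $\wh{\nu}=\int_{\Tigma_L}\wh{\mu}_{\ul{R}}\,dp(\ul{R})$, so $\nu=\int\mu_{\ul{R}}\,dp(\ul{R})$. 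Accordingly, I will work with the sub-family $\mathcal{F}'=\{\mu_{\ul{R}}\}_{\ul{R}\in\Tigma_L^\#}$. Decompose $\varphi=\varphi^*+A-A\circ f^{-1}$ with $A$ bounded, so that on cylinders $\varphi_n$ and $\phi_n$ differ by a bounded error, and fix $\ul{R}\in\Tigma_L^\#$ together with a symbol $S$ that appears infinitely often in $\ul{R}$.

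\medskip

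The core estimate will be obtained by partitioning a single unstable leaf $V^u(\ul{R})$ according to preimages in the symbolic space. Define on $V^u(\ul{R})$ the multiplicity function $\rho_n:=\sum_{\tigma^n\ul{S}'=\ul{R},\,S'_0=R_0}\mathbb{1}_{V^u(\ul{S}')}\circ f^n$. Using Definition \ref{N_R} and \cite[Theorem~1.3]{SBO}, the finite-to-one nature of $\wpi$ on periodic codings which revisit $[R_0]$ and $[S]$ implies $\rho_n\leq K$ for $K:=N(R_0)\cdot N(S)$, independently of $n$. By the conformality relation in Definition \ref{Cara}, for each $\ul{S}'$ with $\tigma^n\ul{S}'=\ul{R}$,
\begin{equation*}
m^\varphi_{V^u(\ul{R})}\bigl(f^{-n}[V^u(\ul{S}')]\bigr) = e^{\varphi_n^*(\ul{S}')-nP_{H_\chi(q)}(\varphi)\pm 2\|A\|_\infty}\cdot m^\varphi_{V^u(\ul{S}')}(1),
\end{equation*}
and by the continuity in Definition \ref{Cara}(1), $m^\varphi_{V^u(\ul{S}')}(1)$ is bounded above and below by positive constants $c_{R_0}^{\pm1}$ depending only on the cylinder $[R_0]$. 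I will test both measures against an arbitrary Lipschitz function $g$ on $V^u(\ul{R})$ and control the oscillation of $g$ on the sets $f^{-n}[V^u(\ul{S}')]$ by \cite[Proposition~4.4]{SBO}, which bounds their diameters by $4e^{-\chi n/2}$.

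\medskip

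With this setup, writing $\wh{g}:=g\circ\wpi$ and using the formula $\wh{\mu}_{\ul{R}}([\ul{W}])=C_{R_0}^{\pm1}\cdot e^{\phi_n(\ul{R})-nP_G(\phi)}$ for a cylinder $[\ul{W}]$ of length $n$ ending in $R_0$ (which follows from Definition \ref{greendoor4} and the bounded oscillation of $\psi$ on $[R_0]$, Lemma \ref{logHarmonicHolder}), a telescoping estimate of the form
\begin{equation*}
K\cdot m^\varphi_{V^u(\ul{R})}(g)\geq \bigl(\rho_n\cdot m^\varphi_{V^u(\ul{R})}\bigr)(g) = O(e^{-\chi n/2})\pm H_L\theta^n + \wt{C}_{R_0}^{\pm1}\cdot\wh{\mu}_{\ul{R}}\bigl(\mathbb{1}_{[R_0]}\circ\sigma^n\cdot\wh{g}\bigr)
\end{equation*}
will emerge, where $\theta\in(0,1)$ and $H_L$ come from Hölder regularity of $\wpi$ combined with the Lipschitz constant of $g$. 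Averaging in $n$ and invoking Birkhoff's ergodic theorem for the ergodic measure $\wh{\nu}$ (together with the absolute-continuity statement of Corollary \ref{gauraluilui2}, which allows one to evaluate Birkhoff averages $\wh{\mu}_{\ul{R}}$-a.s.\ rather than $\wh{\nu}$-a.s.) gives
\begin{equation*}
\lim_{n\to\infty}\frac{1}{n}\sum_{k=0}^{n-1}\wh{\mu}_{\ul{R}}\bigl(\mathbb{1}_{[R_0]}\circ\sigma^k\cdot\wh{g}\bigr) = \wh{\nu}([R_0])\cdot \mu_{\ul{R}}(g),
\end{equation*}
and $\wh{\nu}([R_0])>0$ since $R_0$ is recurrent under $\wh{\nu}$. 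Combining these two displays yields $\mu_{\ul{R}}(g)\leq C\cdot m^\varphi_{V^u(\ul{R})}(g)$ for Lipschitz $g\geq 0$, and by density (Riesz representation) $\mu_{\ul{R}}\ll m^\varphi_{V^u(\ul{R})}$, which is the desired Ledrappier--Young property. The final assertion that $m^\varphi_{V^u}(\HWT_\chi^{\mathrm{PR}})>0$ then follows from Theorem \ref{main1WIS}(2), since $\mu=\mu_{\ul{R}}$ gives positive (indeed full) mass to $\HWT_\chi^{\mathrm{PR}}$.

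\medskip

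The main obstacle will be the bookkeeping between $\phi$ and $\varphi$: their transfer operators differ by the bounded coboundary $A-A\circ f^{-1}$ and by the additional $\psi$-twist in Definition \ref{greendoor4}, so the conformality factors on the $m^\varphi$-side and the Markov factors on the $\wh{\mu}_{\ul{R}}$-side must be matched up to constants that depend only on $R_0$. The geometric input bounding the diameter of $f^{-n}[V^u(\ul{S}')]$, which makes $g$ essentially constant on these sets and thereby closes the estimate, is what allows the bounded-multiplicity bound $\rho_n\leq K$ to translate into a one-sided Radon--Nikodym inequality rather than merely a measure-theoretic comparison.
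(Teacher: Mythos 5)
Your proposal is correct and follows essentially the same route as the paper's own argument: lift to the symbolic space via Theorem \ref{SinaiBowen}, use positive recurrence to restrict to $\mathcal{F}'=\{\mu_{\ul{R}}\}_{\ul{R}\in\Tigma_L^\#}$, bound the overlap multiplicity $\rho_n\leq K$ via the finite-to-one coding, match the conformal factors against the cylinder measures $\wh{\mu}_{\ul{R}}([\ul{W}])$ up to bounded error, control oscillation with the exponential diameter decay, and close by Birkhoff averaging. The only small imprecisions (writing $\phi_n(\ul{R})$ for $\phi_n(\ul{R}\cdot\ul{W})$ in the cylinder formula, and omitting the $C_\varphi$ summand coming from the Grassmann-H\"older oscillation of $\varphi_n$ along the contracted preimages $f^{-n}[V^u(\ul{S}')]$) are exactly the bookkeeping you already flag, and they do not affect the argument.
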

\begin{proof}
Let $\phi:\HWT_\chi\rightarrow\mathbb{R}$ be given by Theorem \ref{SinaiBowen}. As in the proof of Theorem \ref{smoothMeasures}, let $\Tigma$ be a maximal irreducible component of $\Sig$ s.t $\wpi[\Tigma^\#]=H_\chi(q)$ modulo conservative measures; and write $\mathcal{F}_{H_\chi(q)}(\varphi)=\{\mu_{\ul{R}}\}_{\ul{R}\in\Tigma_L}$. 

Denote the lift of $\nu$ to $\Tigma$ by $\wh{\nu}$. As in the proof of Theorem \ref{main1WIS}, $\phi$ is positive recurrent on $\Tigma$ and $\wh{\nu}=\int_{\Tigma_L}\wh{\mu}_{\ul{R}}dp(\ul{R})$, where $p$ is the unique (up to scaling) $\phi$-conformal measure on $\Tigma_L$. In \cite{SarigPR}, Sarig showed that when $\phi$ is positive recurrent, $p$ is conservative, and is carried by $\Tigma_L^\#$. 
Set $
\mathcal{F}':=\{\mu_{\ul{R}}\}_{\ul{R}\in\Tigma_L^\#}$. 
Write $\mu=\mu_{\ul{R}}$, where $\ul{R}\in\Tigma_L^\#$. Let $S\in\mathcal{R}$ be a symbol which repeats infinitely often in $\ul{R}$. 

Let $K\in\mathbb{N}$ be $N(R_0)\cdot N(S)$ (see Definition \ref{N_R}). This is a bound on the number of sequences in $\Sig^\#$ which code the same point and s.t this point can be coded by a sequence which has $R_0$ repeat infinitely often in the future and $S$ repeat infinitely often in the past (see \cite[Theorem~1.3]{SBO}). Choose 
$n\geq 1$ and $\ul{S},\ul{Q}\in\Tigma_L$ s.t $\sigma_R^n\ul{S}=\sigma_R^n\ul{Q}=\ul{R}
$ and $S_0=Q_0=R_0$. Let $\ul{S}^\pm,\ul{Q}^\pm\in\Tigma
$ 
which return 
to $[R_0]$ infinitely often in the future 
and s.t $S^\pm_i=S_i,\forall i\leq0$ and $Q^\pm_i=Q_i,
\forall i\leq0$. Define $x:=\wpi(\ul{S}^\pm),y:=\wpi(\ul{Q}^\pm),z:=\wpi([\ul{S}^\pm,\ul{Q}^\pm]_{R_0})$, where  $[\ul{S}^\pm,\ul{Q}^\pm]_R$ is the Smale bracket of $\ul{S}^\pm$ and $
\ul{Q}^\pm$ in $[R_0]$. If $V^u(\ul{S})\cap V^u(\ul{Q})
\neq\varnothing$, then by \cite[Proposition~3.12]{SBO} $x=z=y$. 

Then for each $\underline{S}\in\tigma^{-n}[\{\ul{R}\}]\cap[R_0]$, the number of $\underline{Q}\in \tigma^{-n}[\{\ul{R}\}]\cap[R_0]$ s.t $V^u(\underline{S})\cap V^u(\ul{Q})\neq\varnothing$ is bounded by $K$. That is,

$$\#\{\ul{S}'\in\Tigma_L:S'_0=R_0,\sigma_R^n\ul{S}'=\ul{R}, V^u(\ul{S}')\cap V^u(\ul{S})\neq\varnothing\}\leq K.$$ 
Define on $V^u(\ul{R})$ the density function $\rho_n:= \sum_{\tigma^n\ul{S}'=\ul{R},S'_0=R}
\mathbb{1}_{V^u(\ul{S}')}\circ f^n$, then for any $n\geq1$,

\begin{equation}\label{forVolLemma}
0\leq \rho_n\leq K.
\end{equation}

Let $c_{R_0}:=\min\{\sup \{m_{V^u(\ul{R}')}^\varphi(1):\ul{R}'\in[R_0]\}^{-1},\inf\{m_{V^u(\ul{R}')}^\varphi(1):\ul{R}'\in[R_0]\}\}$. This is positive by the continuity of $\mathcal{C}$, since $[R_0]$ is compact and $\ul{R}'\mapsto V^u(\ul{R}')$ is continuous in $C^1$-norm (see Definition \ref{Cara}).

Let $g\in C(V^u(\ul{R}))$ s.t $g$ is $L$-Lipschitz, and $\|g\|_\infty\leq 1$. Write $\wh{g}:=g\circ \wpi$, and so $\mu_{\ul{R}}(g)=\wh{\mu}_{\ul{R}}(\wh{g})$. Let 

\begin{equation}\label{weakeromega}
\wh{\omega}_n:=\frac{1}{n}\sum_{k=0}^{n-1}\mathbb{1}_{[R_0]}\circ \sigma^k.	
\end{equation}
Notice, $|\wh{\omega}_n|\leq 1$. By Birkhoff's ergodic theorem, and the absolute continuity of $\{\wh{\mu}_{\ul{R}}\}_{\ul{R}\in\Tigma}$ w.r.t holonomies (see Corollary \ref{gauraluilui2}), for $\wh{\mu}_{\ul{R}}$-a.e $\ul{R}^\pm$, $\wh{\omega}_n(\ul{R}^\pm)\xrightarrow[n\rightarrow\infty]{}\wh{\nu}([R_0])$, and so $\wh{g}(\ul{R}^\pm)\cdot\wh{\omega}_n(\ul{R}^\pm)\xrightarrow[n\rightarrow\infty]{}\wh{\nu}([R_0])\cdot\wh{g}(\ul{R}^\pm)$. Then, by Lebesgue's dominated convergence theorem, 

\begin{equation}\label{weakerMuHatLimit}
\wh{\mu}_{\ul{R}}(\wh{g}\cdot \wh{\omega}_n)\xrightarrow[n\rightarrow\infty]{}\wh{\nu}([R_0])\cdot \wh{\mu}_{\ul{R}}(\wh{g})= \wh{\nu}([R_0])\cdot \mu_{\ul{R}}(g).	
\end{equation}

Since $\wpi$ is H\"older continuous, and $g$ is $L$-Lipschitz, $\wh{g}$ is H\"older continuous as well. Let $H_L>0, \theta\in (0,1)$ s.t $d(\ul{R}^\pm,\ul{S}^\pm)\leq e^{-n}\Rightarrow |\wh{g}(\ul{R}^\pm)-\wh{g}(\ul{S}^\pm)|\leq H_L\cdot \theta^n$.

For every $ n\geq 1$ and $ \ul{S}\in\Tigma_L$ s.t $\sigma_R^n\ul{S}=\ul{R}$, fix $\ul{S}^\pm\in \Tigma$ s.t $S^\pm_i=S_i$ $\forall i\leq 0$. By \cite[Proposition~4.4]{SBO}, 
\begin{equation}\label{weakercontraction}
\mathrm{diam}_{V^u(\ul{R})}(f^{-n}[V^u(\ul{S})])\leq 4e^{-\frac{\chi\cdot n}{2}},
\end{equation}
 where $\mathrm{diam}_{V^u(\ul{R})}$ denotes the diameter w.r.t the induced Riemannian metric on $V^u(\ul{R})$.

\medskip
Let $\underline{W}=(R_0,W_1,\ldots,W_{n-2},R_0)$ be an admissible word of length $n\geq 1$. We estimate $\wh{\mu}_{\ul{R}}([\underline{W}])$: First 
we write $[\underline{W}]= \sigma^{-n}\sigma^n[\underline{W}] $. Next, 
by Corollary \ref{endofNoel}, 
\begin{align*}
\wh{\mu}_{\underline{R}}([\underline{W}]) = & \wh{\mu}_{\underline{R}}(\sigma^{-n}\sigma^n[\underline{W}])=
e^{-n P_{G}(\phi)}\sum_{\tigma^n\ul{S}=\ul{R}}e^{\phi_n(\ul{S})}\wh{\mu}_{\underline{S}} (\sigma^n[\underline{W}])
= 
e^{-n P_{G}(\phi)+\phi_n(\ul{R}
\cdot \ul{W})}\wh{\mu}_{\underline{R}
\cdot\underline{W}} (1)
,
\end{align*}
where $\cdot$ denotes an admissible concatenation. Recall that $\wh{\mu}_{\ul{R}'}(1)=\psi(\ul{R}')$, where $\psi$ is the unique (up to scaling) $\phi$-harmonic function on $\Tigma_L$. Then for 
$C_{R_0}:=\max_{[R_0]}\{\psi,\psi^{-1}\}$
, 
\begin{equation}\label{eqNoel}
	\wh{\mu}_{\underline{R}}([\underline{W}])=C_{R_0}^{\pm 1}\cdot e^{-n P_{G}(\phi)+\phi_n(\ul{R}\cdot \ul{W})}.
\end{equation}

\medskip
Since $\mathcal{C}$ is a $\varphi$-conformal family, we get that for all $n\geq 1$ and for every $ \ul{S}\in\tigma^{-n}[\{\ul{R}\}]$, $$m^\varphi_{V^u(\ul{R})}(f^{-n}[V^u(\ul{S})])= \int e^{\varphi_n(x)-n P_{H_\chi(q)}(\varphi)}dm^\varphi_{V^u(\ul{S})}.$$
We wish to estimate $\varphi_n(\cdot)$ on $V^u(\ul{S})$. For that we need the non-trivial fact that $d_{\mathrm{TM}}(f^{-i}(x),f^{-i}(y))$ decreases exponentially fast in $i\geq0$. This is true since $V^u(\ul{S})$ is contained in the graph of a function with a H\"older continuous derivative (and $d(f^{-i}(x),f^{-i}(y))\leq 4e^{-\frac{\chi}{2}i}$), and the H\"older constant and exponent do not depend on the choice of $\underline{S}\in[R_0]$. For more details see \cite[Definition~3.1]{SBO}. Thus, since $\varphi$ is $\cont$, we get that $\exists C_\varphi>0$ s.t $\forall x,y\in V^u(\ul{S})$, $\sup_{n\geq0}|\varphi_n(x)-\varphi_n(y)|\leq C_\varphi$. Using the fact that $V^u(\ul{S})$ contains a point with a coding in $\Tigma^\#$, together with Theorem \ref{SinaiBowen},  where $\varphi=\phi+A-A\circ f^{-1}$ with $\|A\|_\infty<\infty$, we get for all $n\geq1$,
\begin{equation}\label{eqNoel3}
	m^\varphi_{V^u(\ul{R})}(f^{-n}[V^u(\ul{S})])=e^{\pm (C_\varphi+2\|A\|)}e^{\phi_n(\underline{S})-nP_{H_\chi(q)}(\varphi)}\cdot m_{V^u(\ul{S})}(1)= (c_{R_0}^{-1}e^{C_\varphi+2\|A\|})^{\pm1}e^{\phi_n(\underline{S})-nP_G(\phi)}.
\end{equation}
In the last equality we used the fact that $P_{G}(\phi)=P_{H_\chi(q)}(\varphi)$ (recall \eqref{underpressure2013}).

\medskip
The last identity we need before the main computation of the proof is the following: for any $n\geq1$,
\begin{equation}\label{eqNoel2}
	(\mathbb{1}_{[R_0]}\circ \sigma^n)\cdot\wh{\mu}_{\ul{R}}=\sum_{\substack{|\underline{W}|=n\\ W_0=W_{n-1}=R_0}}\wh{\mu}_{\underline{R}}|_{[\underline{W}]}.
\end{equation}

On the left-hand-side, $\mathbb{1}_{[R_0]}\circ \sigma^n$ acts as a density for $\wh{\mu}_{\ul{R}}$, and on the right-hand-side, $\wh{\mu}_{\underline{R}}|_{[\underline{W}]}$ is the restriction of the measure to the respective cylinder.


\medskip
We are now able to use the estimates and identities from \eqref{forVolLemma}, \eqref{weakercontraction},\eqref{eqNoel}, \eqref{eqNoel3}, and \eqref{eqNoel2} to get $\forall n\geq 1$,

\begin{align}
	K\cdot m_{V^u(\ul{R})}^\varphi(g)\geq&(\rho_n\cdot m_{V^u(\ul{R})}^\varphi)(g)\text{ }(\because \eqref{forVolLemma})\nonumber\\
	(\because g\text{ is Lip},\eqref{weakercontraction})\text{ }=&\pm 4LK\cdot  e^{-\frac{\chi\cdot n}{2}}m_{V^u(\ul{R})}^\varphi(1)
	+ \sum_{\tigma^n\ul{S}=\ul{R},S_0=R_0}m_{V^u(\ul{R})}^\varphi(f^{-n}[V^u(\ul{S})])\cdot g(f^{-n}\circ\wpi(\ul{S}^\pm))\nonumber\\
	(\because \eqref{eqNoel3})\text{ }	=& \pm 4LK\cdot  e^{-\frac{\chi\cdot n}{2}}c_{R_0}^{-1}+(c_{R_0}^{-1}e^{C\varphi+2\|A\|} )^{\pm1}\sum_{\tigma^n\ul{S}=\ul{R},S_0=R_0}e^{\phi_n(\ul{S})-n\cdot P_G(\phi)}\cdot \wh{g}(\sigma^{-n}(\ul{S}^\pm)) \nonumber\\
		(\because \eqref{eqNoel})\text{ } =&\pm 4KL\cdot c_{R_0}^{-1} e^{-\frac{\chi\cdot n}{2}}+ (c_{R_0}^{-1}e^{C\varphi+2\|A\|} C_{R_0})^{\pm1} \sum_{|\ul{W}|=n,W_{n-1}=R_0}\wh{\mu}_{\ul{R}}([\ul{W}])\cdot \wh{g}(\sigma^{-n}(\ul{S}^\pm)) \nonumber\\
	(\because \wh{g}\text{ is Lip}, \eqref{eqNoel2})\text{ } =& \pm 4LKc_{R_0}^{-1}\cdot  e^{-\frac{\chi\cdot n}{2}}\pm e^{C_\varphi+2\|A\|}c_{R_0}^{-1}C_{R_0} \cdot H_L\ \theta^n+ (c_{R_0}^{-1}e^{C\varphi+2\|A\|} C_{R_0})^{\pm1}  \cdot\wh{\mu}_{\ul{R}}(\mathbb{1}_{[R_0]}\circ\sigma^n\cdot\wh{g}).\nonumber
\end{align}
Write $\theta_1:=\max\{\theta,e^{-\frac{\chi}{2}}\} \in(0,1)$, $\wt{C}_{R_0}:=4 e^{C_\varphi+2\|A\|}c_{R_0}^{-1}C_{R_0}\cdot \sum_{k\geq0}\theta_1^k<\infty$. Thus, $\forall n\geq 1$,

\begin{equation}\label{trains}
	K\cdot m_{V^u(\ul{R})}^\varphi(g)\geq 2LK\cdot H_L \cdot \wt{C}_{R_0}+\wt{C}_{R_0}^{\pm1}\cdot\wh{\mu}_{\ul{R}}(\mathbb{1}_{[R_0]}\circ\sigma^n\cdot \wh{g}).
\end{equation}

By summing and averaging \eqref{trains} $N$ times, and by \eqref{weakerMuHatLimit} (recall the definition of $\wh{\omega}_N$ in \eqref{weakeromega}),
\begin{align*}
	K\cdot m_{V^u(\ul{R})}^\varphi(g)\geq\frac{1}{N}\sum_{n=1}^{N}(\rho_n\cdot m_{V^u(\ul{R})}^\varphi)(g)= \pm\frac{1}{N}2LK\cdot H_L\cdot\wt{C}_{R_0}+\wt{C}_{R_0}^{\pm1}\cdot\wh{\mu}_{\ul{R}}(\wh{\omega}_N\wh{g})\xrightarrow[N\rightarrow\infty]{} \wh{\nu}([R_0])\wt{C}_{R_0}^{\pm1}\mu_{\ul{R}}(g).
\end{align*}

 Since $\mathrm{Lip}(V^u(\ul{R}))$ is dense in $\|\cdot\|_\infty$-norm in $C(V^u(\ul{R}))$, by the Riesz-Kakutani-Markov representation theorem, $\mu_{\ul{R}}\leq \wh{\nu}([R_0])^{-1}\cdot K\cdot\wt{C}_{R_0}\cdot  m_{V^u(\ul{R})}^\varphi$.
\end{proof}
\noindent\textbf{Remarks:}
\begin{enumerate}
\item In fact, the proof shows that not only $\mu_{\ul{R}}\ll m_{V^u(\ul{R})}^\varphi$, but that $\mu_{\ul{R}}= (\wh{\nu}([R_0])^{-1}\cdot K\cdot\wt{C}_{R_0})^{\pm1} \cdot m_{V^u(\ul{R})}^\varphi|_{E_{\ul{R}}^{R_0}}$, where $E_{\ul{R}}^{R_0}:=\wpi[\{\ul{R}^\pm\in \Tigma:\forall i\leq 0, R^\pm_i=R_i\text{ and }\#\{j\geq0:R^\pm_j=R_0\}=\infty\}] $ carries $\mu_{\ul{R}}$.
\item	In addition, the proof shows that the $\varphi$-conformal family of measures is unique up to equivalence of the leaf measures, when restricted to sets like $E_{\ul{R}}^{R_0}$; and it works for a $\varphi$-conformal family where the transformation law is not exactly $e^{\varphi_n-n\cdot P_{H_\chi(p)}(\varphi)}$, but merely up to a multiplicative constant uniform in $n$.
\end{enumerate}

\section*{Acknowledgements} I would like to thank the Eberly College of Science in the Pennsylvania State University for excellent working conditions. I am grateful to the referee who read this manuscript with care and who provided many helpful suggestions which improved this manuscript and the presentation of the results in it. I would also like to thank Prof. Omri Sarig for reading this manuscript and for his wise and useful input.

\section*{Special Notation}
\begin{tabular}{ll}
    $\WT$ & weakly temperable points (Definition \ref{temperable})\\
    $\HWT$ & recurrently weakly temperable points (Definition \ref{temperable}, Definition \ref{NUHsharp})\\
    $\Sigma$ & infinite-to-one Markov extension (Theorem \ref{mainSBO})\\
    $\Sig$ & finite-to-one Markov extension (Definition \ref{Doomsday})\\
    $\mathcal{R}$ & Markov partiton (Definition \ref{Doomsday})\\
    $\Tigma$ & maximal irreducible component of $\Sig$ (Definition \ref{irreducibility})\\
    $\Sig^\circ$ & itineraries of orbits in the Markov partition (Definition \ref{canonico})\\
    $X_L$, $X\in \{\Sigma,\Sig,\Tigma,\Sigma^\circ\}$ & $\{(x_i)_{i\leq0}: (x_i)_{i\in\mathbb{Z}}\in X\}$\\
        $X^\#$, $X\in \{\Sigma,\Sig,\Tigma\}$ & $\{x\in X:\exists a,b: x_i=a, x_{-j}=b\text{ for infinitely many positive }i\text{ and positive }j\}$\\
         $X^\#_L$, $X\in \{\Sigma,\Sig,\Tigma\}$ & $\{x\in X_L:\exists a: x_{-i}=a\text{ for infinitely many positive }i\}$\\
          $\sigma_R$ & the right-shift (Definition \ref{littlerightshift})\\
    $\tigma$ & the restriction of the right-shift to $\Tigma_L$ (Definition \ref{Ruelleo})\\
\end{tabular}

\bibliographystyle{alpha}
\bibliography{Elphi}
\end{document}